\numberwithin{equation}{section}
\newcommand{\h}{\mathcal{H}}
\newcommand{\C}{\mathbb{C}}
\newcommand{\Dta}{D_{\varphi}^{\theta,\alpha}}
\newcommand{\Ata}{A_{\varphi}^{\theta,\alpha}}
\newcommand{\Aat}{A_{\varphi}^{\alpha,\theta}}
\newcommand{\ka}{(K_{\alpha})^{\bot}}
\newcommand{\kt}{(K_{\theta})^{\bot}}
\newcommand{\apb}{AP^++BP^-}
\newcommand{\Dt}{D_{\varphi}^{\theta}}
\newcommand{\Daa}{D_{\bar\varphi}^{\alpha,\theta}}
\DeclareMathOperator{\sspan}{span}
\DeclareMathOperator{\Ima}{Im}
\DeclareMathOperator{\codim}{codim}
\DeclareMathOperator{\clos}{clos}
\DeclareMathOperator{\GCD}{GCD}
\DeclareMathOperator{\essinf}{ess\,inf}
\newtheorem{claim}{claim}[section]
\newtheorem{theorem}[claim]{Theorem}
\newtheorem{lemma}[claim]{Lemma}
\newtheorem{proposition}[claim]{Proposition}
\newtheorem{corollary}[claim]{Corollary}
\theoremstyle{definition}
\newtheorem{definition}[claim]{Definition}
\newtheorem{remark}[claim]{Remark}
\title[Dual truncated Toeplitz operators]{Invertibility, Fredholmness and kernels of dual truncated Toeplitz operators}
\thanks{The work of the first author was partially supported by FCT/Portugal through
UID/MAT/04459/2019. The research of the second and the fourth authors was financed by the Ministry of Science and Higher Education of the Republic of Poland}
\author[M. C. C\^amara ]{M. Cristina C\^amara}%
\address{M. Cristina C\^{a}mara
  Center for Mathematical Analysis, Geometry and Dynamical Systems, Mathematics Department, Instituto Superior T\'{e}cnico
 Universidade de Lisboa,  Av. Rovisco Pais, 1049-001 Lisboa, Portugal}
\email{cristina.camara@math.ist.utl.pt}
\author[K. Kli\'s--Garlicka]{Kamila Kli\'s--Garlicka}
\address{
	Kamila Kli\'s-Garlicka,  Department of Applied Mathematics,
  University of Agriculture,  ul. Balicka 253C, 30-198 Krak\'ow, Poland}
\email{rmklis@cyfronet.pl}
\author[ B. \L anucha ]{ Bartosz \L anucha }
\address{
	Bartosz {\L}anucha,  Institute of Mathematics, Maria Curie-Sk{\l}odowska University, pl. M.
	Curie-Sk{\l}odowskiej 1, 20-031 Lublin, Poland}
\email{bartosz.lanucha@poczta.umcs.lublin.pl}
\author[M. Ptak]{Marek Ptak}
\address{
	Marek Ptak, Department of Applied Mathematics,
 University of Agriculture, ul. Balicka 253C, 30-198 Krak\'ow, Poland}
\email{rmptak@cyf-kr.edu.pl}
\keywords{truncated Toeplitz operator, dual truncated Toeplitz operator, paired operator, equivalence after extension, Hardy space, model space, invariant subspaces for unilateral shift.}
\subjclass[2010]{Primary 47B35, Secondary 47B32, 30D20}
\begin{document}

	\maketitle

\begin{abstract}{Asymmetric dual truncated Toeplitz operators acting between the orthogonal complements of two (eventually different) model spaces are introduced and studied. They are shown to be equivalent after extension to paired operators on $L^2(\mathbb T) \oplus L^2(\mathbb T)$ and, if their symbols are invertible in $L^\infty(\mathbb T)$, to asymmetric truncated Toeplitz operators with the inverse symbol. Relations with Carleson's corona theorem are also established. These results are used to study the Fredholmness, the invertibility and the spectra of various classes of dual truncated Toeplitz operators. }
	\end{abstract}
\setcounter{section}{-1}
\section{Introduction}

Toeplitz operators have been for a long time one of the most studied classes of nonselfadjoint operators (\cite {BS}). They are defined as compressions of multiplication operators on $L^2(\mathbb T)$, to the Hardy space of the unit disk $H^2(\mathbb D)$. Dual Toeplitz operators are analogously defined on the orthogonal complement of $H^2(\mathbb D)$, identified as usual with a subspace of $L^2(\mathbb T)$, as multiplication operators followed by projection onto $L^2(\mathbb T) \ominus H^2(\mathbb D)$. Although they differ in various ways from Toeplitz operators, they also share many properties, which is not surprising given that they are anti-unitarily equivalent. The algebraic and spectral properties of dual Toeplitz operators, and the extent to which their properties are parallel to those of Toeplitz operators on $H^2(\mathbb D)$, were studied in \cite{SZ}.

\vspace {2mm}

Truncated Toelitz operators, defined as compressions of multiplication operators to closed subspaces of $H^2(\mathbb D)$ which are invariant for the backward shift $S^*$, called model spaces, have also generated great interest, partly motivated by Sarason's paper \cite{Sa}.
Their study, as well as that of asymmetric truncated Toeplitz operators later introduced  in \cite{CP_ATTO}, raised many interesting questions and has led to new and sometimes surprising results, see for example \cite{BT,  CJKP, CP_ATTO, GMR}. It is natural to consider dual truncated Toeplitz operators, defined analogously as compressions of multiplication operators to the orthogonal complement of a model space in $L^2(\mathbb T)$. These operators were very recently introduced and studied in
 \cite{Ding Sang, Ding Qin Sang, Hu Dong}. It turns out that, in this case, they behave very differently from truncated Toeplitz operators. For instance, the symbol of a dual truncated Toeplitz operator is unique and the only compact operator of that kind is the zero operator, in sharp contrast with what happens with truncated Toeplitz operators on model spaces.

\vspace {2mm}

In this paper we study the kernels and various spectral properties, such as Fredholmness and invertibility, of dual truncated Toeplitz operators. The results are applied to describe the spectra of dual truncated Toeplitz operators in several classes including, as particular cases, the dual truncated shift and its adjoint. We do this by using a novel approach to dual truncated Toeplitz operators and their asymmetric analogues, defined similarly between the orthogonal complements of two possibly different model spaces. This involves proving their equivalence after extension to paired operators in $L^2(\mathbb T)\oplus L^2(\mathbb T)$, defined in Section 2, and establishing connections with the corona theorem. This allows moreover to show that, whenever their symbol is invertible in $L^\infty(\mathbb T)$, dual truncated Toeplitz operators are in fact equivalent after extension to truncated Toeplitz operators with the inverse symbol.

\vspace {2mm}

The paper is organized as follows. In Section 1 we introduce asymmetric dual truncated Toeplitz operators and present some basic properties, while in Section 2 we recall the concepts of paired operator and equivalence after extension between two Banach spaces. In Section 3 we study the solvability of certain equations involving asymmetric dual truncated Toeplitz operators in connection with equations involving paired operators. In Section 5 we show that dual truncated Toeplitz operators are equivalent after extension to truncated Toeplitz operators with the inverse symbol, if the latter is invertible in $L^\infty(\mathbb T)$. In Section 6 we study the kernels of asymmetric dual truncated Toeplitz operators in terms of explicitly defined isomorphisms with kernels of other operators. We show in particular that the kernels of a dual truncated Toeplitz operator and its adjoint are isomorphic and related by the usual conjugation on a model space. In Section 7 we present sufficient conditions for a dual truncated Toeplitz operator to be injective or invertible in terms of certain corona pairs, i.e., pairs of functions satisfying the hypotheses of Carleson's corona theorem (\cite{Sarason2, Garnett}). We use the previous results to study the Fredholmness, invertibility and spectra of several classes  of dual truncated Toeplitz operators.


\section{Elementary properties}

Let $P^+,\,P^-$ be the orthogonal projections from $L^2$ onto $H^2$ and $H^2_-=\bar z\overline {H^2}$, respectively. We have $P^-=I - P^+$.

Recall that for $\varphi\in L^\infty=L^\infty(\mathbb{T})$ a {\it Toeplitz operator} $T_\varphi\colon H^2\to H^2$ is defined by $T_\varphi f=P^+(\varphi f)$ for $f\in H^2$.

For an inner function $\theta$ define the {\it  model space} $K_\theta=H^2\ominus \theta H^2$ and let $P_\theta$ and $Q_\theta$ be the orthogonal projections from $L^2 =L^2(\mathbb T)$ onto the model space $K_\theta$ and its orthogonal complement $(K_{\theta})^{\bot}= L^2\ominus K_{\theta}=H^2_-\oplus \theta H^2$, respectively.

Let $\alpha,\theta$ be inner functions and let $\varphi \in L^2$. An {\it asymmetric truncated Toeplitz operator} $A^{\theta,\alpha}_\varphi$ is defined by
\begin{equation*}
  A_{\varphi}^{\theta,\alpha}f=P_{\alpha}\varphi P_{\theta} f,
\end{equation*}
for $f\in K_\theta\cap L^\infty$ (see \cite{CP_ATTO,CJKP}),
whereas the {\it asymmetric dual truncated Toeplitz operator} $D_{\varphi}^{\theta,\alpha}$ is defined by
\begin{equation*}
  D_{\varphi}^{\theta,\alpha}f=Q_{\alpha}\varphi Q_{\theta} f=P^-\varphi f+\alpha P^+\bar\alpha\varphi f
\end{equation*}
for $f\in (K_{\theta})^{\bot}\cap L^{\infty}$, which is a dense subset of $(K_{\theta})^{\bot}$. If  $A^{\theta,\alpha}_\varphi$ or $D_{\varphi}^{\theta,\alpha}$ have a bounded extension to $K_\theta$ or $(K_{\theta})^{\bot}$, respectively, we denote them also by $A^{\theta,\alpha}_\varphi$ and $D_{\varphi}^{\theta,\alpha}$, respectively. When $\alpha =\theta$ instead of $A^{\theta,\theta}_\varphi$ and $D_{\varphi}^{\theta,\theta}$ we write $A^{\theta}_\varphi$ and $D_{\varphi}^{\theta}$, respectively.

We start with some elementary properties of asymmetric dual truncated Toeplitz operators. These properties were proved in \cite{Ding Sang} for $\alpha=\theta$.

\begin{proposition}\label{1.1}
Let $\varphi \in L^2$. Then $\Dta$ is bounded if and only if $\varphi\in L^{\infty}$ and, in that case, $\|\Dta\|=\|\varphi\|_{\infty}$.
\end{proposition}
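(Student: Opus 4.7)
The plan is to prove the two inequalities $\|\Dta\|\le\|\varphi\|_\infty$ and $\|\Dta\|\ge\|\varphi\|_\infty$, where the second is read as: if $\Dta$ is bounded then $\varphi\in L^\infty$ and $\|\Dta\|\ge\|\varphi\|_\infty$. The first is routine: for $f\in(K_\theta)^\perp\cap L^\infty$ one has $Q_\theta f=f$, hence $\Dta f=Q_\alpha(\varphi f)$, and since $Q_\alpha$ is an orthogonal projection, $\|\Dta f\|_2\le\|\varphi f\|_2\le\|\varphi\|_\infty\|f\|_2$. This gives both boundedness and the upper norm bound.

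For the reverse direction I plan to produce nearly-optimal test vectors inside the convenient subspace $\theta H^\infty\subset(K_\theta)^\perp\cap L^\infty$. Fix $h\in H^\infty$ and set $f_n=\theta z^n h$; then $\|f_n\|_2=\|h\|_2$, and writing $u=\varphi\theta h\in L^2$ I have $\Dta f_n=Q_\alpha(z^n u)$, so $\|\Dta f_n\|_2^2=\|u\|_2^2-\|P_\alpha(z^n u)\|_2^2$. The heart of the argument will be to show $\|P_\alpha(z^n u)\|_2\to 0$ as $n\to\infty$. Using $P_\alpha v=P^+v-\alpha P^+(\bar\alpha v)$ together with $\alpha\bar\alpha=1$, a short calculation yields
\begin{equation*}
P_\alpha(z^n u)=\alpha P^-(\bar\alpha z^n u)-P^-(z^n u),
\end{equation*}
and since multiplication by $z^n$ shifts Fourier coefficients to the right, the quantities $\|P^-(z^n u)\|_2^2=\sum_{j<-n}|\hat u(j)|^2$ and analogously $\|P^-(\bar\alpha z^n u)\|_2^2$ are genuine tails of the $L^2$-norms of $u$ and $\bar\alpha u$ and therefore vanish as $n\to\infty$. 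Hence $\|\Dta f_n\|_2\to\|\varphi h\|_2$, which, assuming $\Dta$ is bounded, forces
\begin{equation*}
\|\Dta\|\cdot\|h\|_2\ge\|\varphi h\|_2\qquad\text{for every }h\in H^\infty.
\end{equation*}

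To finish I will test against the normalized reproducing kernels $k_\lambda(z)=(1-|\lambda|^2)^{1/2}/(1-\bar\lambda z)\in H^\infty$, $\lambda\in\mathbb{D}$. Since $|k_\lambda|^2$ on $\mathbb T$ is the Poisson kernel at $\lambda$, the quantity $\|\varphi k_\lambda\|_2^2$ equals the Poisson integral $P[|\varphi|^2](\lambda)$; at any Lebesgue point $e^{it_0}$ of $|\varphi|^2$ this converges nontangentially to $|\varphi(e^{it_0})|^2$, and choosing $e^{it_0}$ where $|\varphi|$ is close to $\|\varphi\|_\infty$ (or is arbitrarily large if $\varphi\notin L^\infty$) shows that $\sup_{h\in H^\infty\setminus\{0\}}\|\varphi h\|_2/\|h\|_2=\|\varphi\|_\infty\in[0,\infty]$. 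Combined with the previous display, this simultaneously forces $\varphi\in L^\infty$ whenever $\Dta$ is bounded and upgrades the norm comparison to $\|\Dta\|\ge\|\varphi\|_\infty$. The only delicate point is the identity for $P_\alpha(z^n u)$ and the Fourier-shift observation that makes both of its summands vanish in the limit; the rest is packaging.
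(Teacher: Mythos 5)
Your proof is correct. Both you and the paper get the upper bound $\|\Dta\|\le\|\varphi\|_\infty$ the same trivial way, and both probe the lower bound with test vectors from $\theta H^\infty$, but the mechanisms differ. The paper proves the pointwise inequality $\|\Dta(\theta f)\|\ge\|T_{\bar\alpha\theta\varphi}f\|$ for $f\in H^\infty$ and then outsources everything to the classical Toeplitz facts (Hartman--Wintner boundedness and $\|T_\psi\|=\|\psi\|_\infty$), so that $\varphi\in L^\infty$ and $\|\varphi\|_\infty\le\|\Dta\|$ fall out by citation. You instead prove an asymptotic \emph{equality}: with $f_n=\theta z^n h$ you compute $Q_\alpha$ directly via $P_\alpha(z^n u)=\alpha P^-(\bar\alpha z^n u)-P^-(z^n u)$ and the Fourier-tail estimate to get $\|\Dta f_n\|\to\|\varphi h\|_2$, and then identify $\sup_h\|\varphi h\|_2/\|h\|_2$ with $\|\varphi\|_\infty$ by the Poisson-kernel/Lebesgue-point argument on normalized reproducing kernels. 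All steps check (the identity for $P_\alpha$ on general $L^2$ elements is valid because $P^+(\bar\alpha P^-v)=0$, and $\|z^nu\|_2=\|u\|_2$ gives the Pythagorean bookkeeping). In effect you have inlined the standard proof of the Brown--Halmos norm identity rather than invoking it: your argument is longer but self-contained and makes transparent exactly where the essential supremum is captured, while the paper's is shorter at the cost of leaning on known Toeplitz theory.
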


\begin{proof}
Let $\varphi \in L^2$ and take $f\in H^\infty$.  Then $\theta f \in \theta H^\infty \subset \theta H^2$,
$$\Dta(\theta f)=(P^-+\alpha P^+\bar\alpha)(\varphi \theta f)\in H^2_- \oplus \alpha H^2$$
and
\begin{displaymath}
\begin{split}
 \|\Dta(\theta f)\|^2&=\|P^-\varphi \theta f\|^2+\|\alpha P^+ \bar\alpha \varphi \theta f\|^2  \\
  &=\|P^-\varphi \theta f\|^2+\|P^+\bar\alpha \varphi\theta f\|^2 \geqslant \|T_{\bar\alpha\theta\varphi}(f)\|^2.
\end{split}
\end{displaymath}

If $\Dta$ is bounded, then, for some $C>0$, $\|\Dta(\theta f)\|^2 \leqslant C\|f\|^2$, so by the above $\|T_{\bar\alpha\theta\varphi}(f)\|^2\leqslant C\|f\|^2$.
Since this holds for any $f\in H^\infty$, it follows that $T_{\bar\alpha\theta\varphi}$ is bounded in $H^2$ and therefore $\bar\alpha\theta\varphi \in L^{\infty}$, which implies that $\varphi \in L^{\infty}$. Moreover, $\|\varphi\|_{\infty}=\|T_{\bar\alpha\theta\varphi}\|\leqslant \|\Dta\|$.

On the other hand, if $\varphi \in L^{\infty}$, then $\Dta$ is clearly a bounded operator from $(K_{\theta})^{\bot}$ into $(K_{\alpha})^{\bot}$. Indeed, for any $f\in \kt$ we then have
\[\|\Dta f\|=\|Q_{\alpha}\varphi f\|\leqslant \|\varphi f\|\leqslant \|\varphi\|_{\infty}\|f\|.\]
Moreover, $ \|\Dta\| \leqslant \|\varphi\|_{\infty}$.
\end{proof}

Taking the result of Proposition \ref{1.1} into account, we assume from now on that $\varphi \in L^\infty$. It is also easy to see that $(\Dta)^*=D_{\overline{\varphi}}^{\alpha,\theta}$.

\begin{proposition}\label{1.2}
For $\varphi \in L^{\infty}$, we have that $\Dta$ is compact if and only if $\varphi=0$.
\end{proposition}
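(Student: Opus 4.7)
The ``if'' direction is immediate, so the task is to show that compactness of $\Dta$ forces $\varphi=0$. My plan is to exhibit a bounded sequence $(f_n)\subset\kt$ that converges weakly to $0$ while $\|\Dta f_n\|$ stays bounded below by a quantity tending to $\|\varphi\|_{L^2}$; compactness will then force $\|\varphi\|_{L^2}=0$.

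The natural candidate is $f_n=\theta z^n\in\theta H^2\subset\kt$, which has norm one. Weak convergence of $f_n$ to $0$ in $L^2$ is clear: for $g\in L^2$, $\langle\theta z^n,g\rangle$ is the $n$-th Fourier coefficient of $\theta\bar g\in L^2$, which tends to zero by Riemann--Lebesgue. For the lower bound, the computation in the proof of Proposition~\ref{1.1} (applied with $f=z^n\in H^\infty$) already yields
\[
\|\Dta(\theta z^n)\|^2\geq\|T_{\bar\alpha\theta\varphi}(z^n)\|^2.
\]
Writing $\psi=\bar\alpha\theta\varphi$, the right-hand side equals $\sum_{k\geq -n}|\hat\psi(k)|^2$, which increases to $\|\psi\|_{L^2}^2=\|\varphi\|_{L^2}^2$ since $|\alpha|=|\theta|=1$ a.e.\ on $\mathbb{T}$.

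Combining the two ingredients, if $\Dta$ were compact then the weak convergence $f_n\to 0$ would force $\|\Dta f_n\|\to 0$, and comparison with the lower bound would give $\|\varphi\|_{L^2}=0$ and hence $\varphi=0$. The only conceptual step is identifying the right test sequence; once one observes that multiplication by the inner function $\theta$ places $z^n$ inside $\theta H^2\subset\kt$ without disturbing weak convergence to $0$, the remaining ingredients are essentially already contained in the proof of Proposition~\ref{1.1}, so I do not expect any serious obstacle.
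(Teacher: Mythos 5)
Your proof is correct and follows essentially the same route as the paper's: both reduce compactness of $\Dta$ to compactness of the Toeplitz operator $T_{\bar\alpha\theta\varphi}$ via the inequality $\|\Dta(\theta f)\|\geq\|T_{\bar\alpha\theta\varphi}f\|$ established in the proof of Proposition~\ref{1.1}. The only difference is that the paper tests against arbitrary weakly null sequences in $\theta H^2$ and then invokes the classical fact that the only compact Toeplitz operator is the zero operator, whereas you specialize to $f_n=\theta z^n$ and reprove that fact inline through the Fourier-coefficient computation $\|T_{\bar\alpha\theta\varphi}z^n\|^2\to\|\varphi\|_{L^2}^2$ --- a self-contained but equivalent argument.
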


\begin{proof}
Assume that $\Dta$ is compact and let $f_n\in\kt$, with $f_n$ weakly convergent to $0$ $(f_n\rightharpoonup 0)$. Then $\|\Dta f_n\|\to 0$. Note that for $f_n=\theta \tilde{f}_n$ with $\tilde{f}_n\in H^2$, we have $f_n\rightharpoonup 0 $
(in $\theta H^2$) if and only if $ \tilde f_n \rightharpoonup 0$ (in $H^2$). It follows that if $ \tilde f_n \rightharpoonup 0$, then $\|\Dta (\theta \tilde f_n)\|\to 0$. Since $\|T_{\bar\alpha\theta\varphi}\tilde f_n\|\leqslant \|\Dta (\theta \tilde f_n)\|$ (see the proof of Proposition \ref{1.1}), we have that $\|T_{\bar\alpha\theta\varphi} \tilde f_n\| \to 0$ whenever $ \tilde f_n \rightharpoonup 0$. So, if $\Dta$ is compact, then $T_{\bar\alpha\theta\varphi}$ is also compact and therefore $\varphi=0$.
\end{proof}

\begin{remark}\label{1.3}
Proposition \ref{1.2} implies, in particular, that the only symbol for the zero asymmetric dual truncated Toeplitz operator is $\varphi=0$. Since the question of $\Dta$ being the zero operator is equivalent to $\varphi$ being a multiplier from $\kt$ into $K_{\alpha}$, we conclude also that there are no non-trivial $L^\infty$-multipliers from $\kt$ into $K_\alpha$. In contrast with this, the question of whether there are non-trivial multipliers from $K_{\alpha}$ into $\kt$, which is equivalent to $\Aat$ being the zero operator, has a positive answer (\cite {CP_ATTO, CJKP}).
\end{remark}


\section{Paired operators and equivalence after extension}
For a Banach space $X$ denote by $\mathcal{L}(X)$ the space of all bounded linear operators $A\colon X\to X$. Let $P\in \mathcal{L}(X)$ be a projection and let $Q=I-P$ be its complementary projection. An operator of the form $AP+BQ$ or $PA+QB$, where $A,B \in \mathcal{L}(X)$, is called a \emph{paired operator} (\cite{MP}).

Paired operators are closely connected with operators of the form $PCP|_{\Ima P}$ and $QCQ|_{\Ima Q}$, where $C\in \mathcal{L}(X)$, which are called general Wiener-Hopf operators or operators of Wiener-Hopf type (\cite {MP, Speck WH}). To understand this relation, it will be useful to introduce here the concept of equivalence after extension for operators.


\begin{definition}[Equivalence after extension, \cite{Bart}]
Let $X$, $\tilde X$, $Y$, $\tilde Y$ be Banach spaces and let us use the term operator to mean a bounded linear operator.

The operators $T\colon X\to \tilde X$ and $S\colon Y\to \tilde Y$ are said to be (algebraically and topologically) \textit{equivalent} if and only if $T=ESF$ where $E$ and $F$ are invertible operators; in that case we use the notation $T \sim S$.

The operators $T$ and $S$ are \textit{equivalent after extension} ($T \overset{\star}\sim S$) if and only if there exist (possibly trivial) Banach spaces $X_0,Y_0$, called extension spaces, and invertible operators $E \colon \tilde Y \oplus Y_0 \to \tilde X \oplus X_0$, $F\colon X \oplus X_0 \to Y \oplus Y_0$, such that $$\begin{bmatrix} T & 0 \\ 0 & I_{X_0}\end{bmatrix} =E \begin{bmatrix} S & 0 \\ 0 & I_{Y_0}\end{bmatrix}F.$$
\end{definition}

Clearly, if  $T \sim S$, then $T \overset{\star}\sim S$.
Operators that are equivalent after extension share many properties. In particular we have the following.

\begin{theorem}[\cite{Bart}]\label{2.2} Let $T$ and $S$ be two operators, $T\colon X\to \tilde X$, $S\colon Y \to \tilde Y$, and assume that $T \overset{\star}\sim S$. Then
\begin{enumerate}
  \item $\ker T$ is isomorphic to $\ker S$, i.e., $\ker T \simeq \ker S$;
  \item $\Ima T$ is closed if and only if $\Ima S$ is closed and, in that case, $\tilde X/\Ima T \simeq \tilde Y / \Ima S$;
  \item  if one of the operators $T$, $S$ is generalized (left, right) invertible $($\cite{MP, Speck WH}$)$, then the other is generalized (left, right) invertible too;
  \item $T$ is Fredholm if and only if $S$ is Fredholm and, in that case, $\dim\ker T=\dim\ker S$, $\codim \Ima T=\codim \Ima S$.
\end{enumerate}\end{theorem}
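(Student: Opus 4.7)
The plan is to reduce the theorem to two elementary observations about equivalence after extension: first, that forming $\tilde T := \begin{bmatrix} T & 0 \\ 0 & I_{X_0} \end{bmatrix}$ out of $T$ is transparent with respect to each of the four listed properties, and second, that composition on either side with an invertible operator preserves them. By hypothesis $\tilde T = E \tilde S F$ with $\tilde S := \begin{bmatrix} S & 0 \\ 0 & I_{Y_0} \end{bmatrix}$ and $E$, $F$ invertible, so chaining these two observations delivers all four conclusions.

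For (1) and (2), since $E$ and $F$ are Banach-space isomorphisms we have $\ker \tilde T = F^{-1}(\ker \tilde S)$ and $\Ima \tilde T = E(\Ima \tilde S)$. Thus $F^{-1}$ restricts to an isomorphism $\ker \tilde S \to \ker \tilde T$, the image of $\tilde T$ is closed iff that of $\tilde S$ is, and when this holds $E$ descends to an isomorphism of the quotients. On the extension side one reads off $\ker \tilde T = \ker T \oplus \{0\}$, $\Ima \tilde T = \Ima T \oplus X_0$, and $(\tilde X \oplus X_0)/\Ima \tilde T \simeq \tilde X / \Ima T$; the analogous identities for $\tilde S$ and $S$ combine with the previous isomorphisms to yield (1) and (2). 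Statement (4) then follows at once, since the isomorphisms produced are in particular bijections of vector spaces and hence preserve dimension and codimension.

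For (3), suppose $S^-$ is a generalized inverse of $S$, so $S S^- S = S$. Then $\tilde S^- := \begin{bmatrix} S^- & 0 \\ 0 & I_{Y_0} \end{bmatrix}$ satisfies $\tilde S \tilde S^- \tilde S = \tilde S$, and setting $\tilde T^- := F^{-1} \tilde S^- E^{-1}$ a short calculation gives
\[
\tilde T \tilde T^- \tilde T = (E \tilde S F)(F^{-1} \tilde S^- E^{-1})(E \tilde S F) = E \tilde S \tilde S^- \tilde S F = E \tilde S F = \tilde T.
\]
Writing $\tilde T^-$ as a $2 \times 2$ block matrix with $(1,1)$-entry $A$, the identity $\tilde T \tilde T^- \tilde T = \tilde T$ forces $T A T = T$, exhibiting $A$ as a generalized inverse of $T$. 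The left- and right-invertible cases are handled identically, starting from $\tilde S^- \tilde S = I$ or $\tilde S \tilde S^- = I$ respectively, and inspecting the corresponding $(1,1)$-block.

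The main obstacle, such as it is, is bookkeeping rather than analysis: one must check that the $2 \times 2$ block manipulations really do deposit the required relations on the $(1,1)$-block in each case, and that $\Ima T \oplus X_0$ is closed in $\tilde X \oplus X_0$ iff $\Ima T$ is closed in $\tilde X$, which is immediate from the definition of the product norm.
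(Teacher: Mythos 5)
Your proof is correct. Note that the paper itself gives no proof of this statement: Theorem \ref{2.2} is quoted from the reference [Bart] (Bart and Tsekanovski\u{\i}), so there is no argument in the paper to compare against. Your direct verification --- reading off $\ker\tilde T=F^{-1}(\ker\tilde S)$ and $\Ima\tilde T=E(\Ima\tilde S)$ from $\tilde T=E\tilde SF$, identifying $\ker\tilde T=\ker T\oplus\{0\}$ and $\Ima\tilde T=\Ima T\oplus X_0$, and transporting a generalized (left, right) inverse via $\tilde T^-=F^{-1}\tilde S^-E^{-1}$ and its $(1,1)$-block --- is the standard one and is complete.
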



It is not difficult to see that $PCP|_{\Ima P}$ (respectively, $QCQ|_{\Ima Q}$) is equivalent after extension to $CP+Q$ (respectively, $P+CQ$) and
\begin{equation}\label{e22}
  CP+Q \sim PC+Q \sim PCP+Q,
\end{equation}
because
$$CP+Q=(PCP+Q) (I+QCP)$$
and
$$PC+Q=(I+PCQ)(PCP+Q),$$
where
\begin{equation*}
  (I+PCQ)^{-1}=I-PCQ, \quad (I+QCP)^{-1}=I-QCP,
\end{equation*}
(and analogously $P+CQ \sim P+QC \sim P+QCQ$).

As an example of two operators which are equivalent after extension we have the following.

\begin{theorem}[\cite{CP_ATTO}]\label{2.3} Let $\varphi \in L^{\infty}$ and let $\alpha$, $\theta$ be inner functions. Then $\Ata \overset{\star}\sim  T_{\Phi}$ where $\Ata=P_{\alpha}\varphi P_{\theta}|_{K_{\theta}}$ and $T_{\Phi}$ is the block Toeplitz operator on $H^2\oplus H^2$ with symbol $\Phi=\begin{bmatrix}
                                   \bar\theta & 0 \\
                                   \varphi & \alpha
                                 \end{bmatrix}$.

\end{theorem}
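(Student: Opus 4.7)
The plan is to realize $T_\Phi$, after appropriate unitary rearrangements of its domain and codomain, as a $3\times 3$ block operator matrix which I then reduce by elementary row and column operations to the diagonal form $\mathrm{diag}(\Ata,\,I_{H^2},\,I_{H^2})$; the equivalence after extension will be read off directly from the definition.

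First I would decompose $H^2 = K_\theta \oplus \theta H^2$ on the first component of the domain of $T_\Phi$ and $H^2 = K_\alpha \oplus \alpha H^2$ on the second component of the codomain. Using the natural unitaries $\theta H^2 \simeq H^2$ (via $\theta h \leftrightarrow h$) and $\alpha H^2 \simeq H^2$ (via $\alpha h \leftrightarrow h$) I get unitaries
\[
U: K_\theta \oplus H^2 \oplus H^2 \to H^2 \oplus H^2, \qquad V: H^2 \oplus K_\alpha \oplus H^2 \to H^2 \oplus H^2.
\]
A direct computation, using $T_{\bar\theta} k = 0$ for $k\in K_\theta$, $T_{\bar\theta}(\theta h)=h$, and $P_\alpha T_\varphi k = \Ata k$ for $k\in K_\theta$, then yields
\[
V^{-1} T_\Phi U \;=\; \begin{bmatrix} 0 & I_{H^2} & 0 \\ \Ata & \ast & 0 \\ \ast & \ast & I_{H^2} \end{bmatrix},
\]
whose $\ast$ entries are bounded operators I shall not need to evaluate explicitly.

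Next I would reduce this matrix to $\mathrm{diag}(\Ata,\,I_{H^2},\,I_{H^2})$ by a short sequence of elementary block operations. Swap rows $1$ and $2$ (a unitary block permutation) to put $\Ata$ in the $(1,1)$-slot; then use the identity in position $(2,2)$ to clear entries $(1,2)$ and $(3,2)$ by subtracting appropriate multiples of the second row; and finally use the identity in position $(3,3)$ to clear the $(3,1)$ entry by a single column operation (right multiplication by $I - e_{31}\,T_{\bar\alpha\varphi}|_{K_\theta}$). Each non-permutation factor has the form $I\pm e_{ij}X$ with $X$ bounded, hence is invertible with inverse $I\mp e_{ij}X$. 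Composing these factors with $U$ and $V^{-1}$ gives invertible operators $\widetilde E$ and $\widetilde F$ with
\[
\widetilde E\, T_\Phi\, \widetilde F \;=\; \begin{bmatrix} \Ata & 0 \\ 0 & I_{H^2 \oplus H^2} \end{bmatrix},
\]
which is precisely the defining diagram of $\Ata \overset{\star}{\sim} T_\Phi$ with extension spaces $X_0 = H^2 \oplus H^2$ and $Y_0 = \{0\}$.

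The main obstacle is not conceptual but bookkeeping: each row swap alters the codomain splitting (for instance $H^2 \oplus K_\alpha \oplus H^2$ becomes $K_\alpha \oplus H^2 \oplus H^2$), so every intermediate invertible must be tracked carefully through the chain of unitary identifications and $I\pm e_{ij}X$ factors to ensure that the final composites $\widetilde E$ and $\widetilde F$ have the source and target spaces prescribed by the definition of equivalence after extension.
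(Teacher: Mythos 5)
Your argument is correct: the $3\times 3$ block matrix you obtain for $V^{-1}T_\Phi U$ is right (the key identities $T_{\bar\theta}|_{K_\theta}=0$, $T_{\bar\theta}\theta h=h$ and $P_\alpha\varphi|_{K_\theta}=\Ata$ all hold), each elementary factor $I\pm e_{ij}X$ is invertible since $(e_{ij}X)^2=0$, and the resulting factorization $\widetilde E\,T_\Phi\,\widetilde F=\operatorname{diag}(\Ata, I_{H^2\oplus H^2})$ is exactly the definition of $\Ata \overset{\star}\sim T_\Phi$ with extension spaces $X_0=H^2\oplus H^2$, $Y_0=\{0\}$. The paper itself states this result without proof, citing \cite{CP_ATTO}, and the proof there is of the same nature (an explicit pair of invertible operators $E$, $F$ built from the decompositions $H^2=K_\theta\oplus\theta H^2$ and $H^2=K_\alpha\oplus\alpha H^2$), so your route is essentially the standard one, just organized as a row--column reduction.
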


Equivalence after extension for two operators $T$ and $S$ implies that there is a strong connection between the solvability of the equations $T\varphi=\psi$ and $Sx=y$, in particular as regards the existence and uniqueness of solutions.

In the next section we study the relations between the solutions of the equations
\begin{equation*}
  \Dta f= g, \quad f\in \kt, g\in \ka
\end{equation*}
and
\begin{equation*}
  (\apb)\Phi=\Psi, \quad \Phi,\Psi \in L^2\oplus L^2,
\end{equation*}
where $\apb$ is a certain paired operator on $L^2\oplus L^2$, as a first step to establishing the equivalence after extension of $\Dta$ to a paired operator $\apb$. Here $P^{\pm}(L^2\oplus L^2)=P^{\pm}L^2\oplus P^{\pm}L^2$.


\section{Solvability relations}\label{sec3}
Let $\varphi \in L^{\infty}$ and let $\alpha,\theta$ be inner functions. We define
\begin{equation*}
  A=\begin{bmatrix}
      \varphi \theta & -1 \\
      \varphi\theta\bar\alpha & \phantom{-}0
    \end{bmatrix},
  \quad  B=\begin{bmatrix}
      \varphi & \phantom{-}0 \\
    \bar\alpha\varphi & -1
    \end{bmatrix}.
\end{equation*}

\begin{theorem}\label{3.1}
\begin{enumerate}
  \item Let $f_-,g_-\in H^2_-$, $\tilde f_+, \tilde g_+\in H^2$. Then $$\Dta(f_-+\theta\tilde f_+)=g_-+\alpha\tilde g_+$$ implies that $$(\apb)\Phi=\Psi,$$
  where $\Phi,\Psi \in L^2\oplus L^2$ are given by
  \begin{align*}
    \Phi=&\begin{bmatrix}
           \phi_1 \\
           \phi_2
         \end{bmatrix}=
         \begin{bmatrix}
           f_-+\tilde f_+ \\
           (1+\bar\alpha)P_{\alpha}\varphi(f_-+\theta\tilde f_+)
         \end{bmatrix},\\
    \Psi=&\begin{bmatrix}
           \psi_1 \\
           \psi_2
         \end{bmatrix}=
         \begin{bmatrix}
           g_-+\alpha\tilde g_+ \\
           \bar\alpha g_-+\tilde g_+
         \end{bmatrix}.
  \end{align*}
  \item Let $\Phi,\Psi\in L^2\oplus L^2$, $
    \Phi=\begin{bmatrix}
           \phi_1 \\
           \phi_2
         \end{bmatrix},\
    \Psi=\begin{bmatrix}
           \psi_1 \\
           \psi_2
         \end{bmatrix}.$
   Then
$$(\apb)\Phi=\Psi$$
implies that $$\Dta(f_-+\theta\tilde f_+)=g_-+\alpha\tilde g_+,$$
where $f_-,g_-\in H^2_-$ and $\tilde f_+,\tilde g_+\in H^2$ are given by
\begin{align*}
  f_-=P^-\phi_1, & \quad g_-=P^-\psi_1,\\
   \tilde f_+=P^+\phi_1, & \quad \tilde g_+=P^+\psi_2.
\end{align*}
\end{enumerate}
\end{theorem}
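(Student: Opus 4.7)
My plan is to verify both parts by direct computation after determining the analytic/anti-analytic decompositions of the components of $\Phi$ and $\Psi$. For $\phi_1 = f_- + \tilde f_+$ the splitting is immediate: $P^-\phi_1 = f_-$ and $P^+\phi_1 = \tilde f_+$. For $\phi_2$ in part (1), the key observation is that $h := P_\alpha \varphi(f_- + \theta \tilde f_+) \in K_\alpha = H^2 \cap \alpha\overline{zH^2}$ lies in $H^2$ while $\bar\alpha h \in \overline{zH^2} = H^2_-$, so the factor $(1+\bar\alpha)$ in the definition of $\phi_2$ yields $P^+\phi_2 = h$ and $P^-\phi_2 = \bar\alpha h$.

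With these decompositions, a straightforward matrix calculation gives
\begin{equation*}
AP^+\Phi + BP^-\Phi = \begin{bmatrix} \varphi(f_- + \theta\tilde f_+) - h \\ \bar\alpha\bigl(\varphi(f_- + \theta\tilde f_+) - h\bigr) \end{bmatrix} = \begin{bmatrix} Q_\alpha \varphi(f_- + \theta\tilde f_+) \\ \bar\alpha\, Q_\alpha \varphi(f_- + \theta\tilde f_+) \end{bmatrix}.
\end{equation*}
The hypothesis $\Dta(f_- + \theta\tilde f_+) = g_- + \alpha\tilde g_+$ then identifies the first component with $\psi_1 = g_- + \alpha\tilde g_+$ and, after multiplication by $\bar\alpha$, the second component with $\bar\alpha g_- + \tilde g_+ = \psi_2$. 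This completes part (1).

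For part (2), I would expand $(\apb)\Phi = \Psi$ into two scalar equations in terms of the projections of $\phi_1,\phi_2$:
\begin{align*}
\varphi(\theta P^+\phi_1 + P^-\phi_1) - P^+\phi_2 &= \psi_1, \\
\bar\alpha\,\varphi(\theta P^+\phi_1 + P^-\phi_1) - P^-\phi_2 &= \psi_2.
\end{align*}
Setting $f_- = P^-\phi_1$ and $\tilde f_+ = P^+\phi_1$ and applying $P^-$ to the first line (noting $P^- P^+\phi_2 = 0$) and $P^+$ to the second line (noting $P^+ P^-\phi_2 = 0$) yields $P^-\varphi(f_- + \theta\tilde f_+) = P^-\psi_1 = g_-$ and $P^+\bar\alpha\varphi(f_- + \theta\tilde f_+) = P^+\psi_2 = \tilde g_+$. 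Since $\Dta = P^-\varphi(\cdot) + \alpha P^+\bar\alpha\varphi(\cdot)$, combining these two equations gives $\Dta(f_- + \theta\tilde f_+) = g_- + \alpha\tilde g_+$, as required.

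The main obstacle is essentially bookkeeping: there is no subtle obstruction, but one has to track carefully where each projection acts and why the ansatz for $\phi_2$ in part (1) is the right one. The factor $(1+\bar\alpha)$ is engineered precisely so that the $H^2$ and $H^2_-$ parts of $\phi_2$ are related by multiplication by $\bar\alpha$, matching the block structure of $A$ and $B$. Its necessity becomes transparent by running the computation backwards from the desired right-hand side $\bigl(Q_\alpha\varphi(\cdot),\,\bar\alpha Q_\alpha\varphi(\cdot)\bigr)^{T}$.
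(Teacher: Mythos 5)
Your proof is correct and follows essentially the same route as the paper: both arguments rest on splitting $(1+\bar\alpha)P_{\alpha}\varphi(f_-+\theta\tilde f_+)$ into its $H^2$ and $H^2_-$ parts via membership of $P_{\alpha}\varphi(f_-+\theta\tilde f_+)$ in $K_\alpha$, and on the same two scalar equations obtained by expanding $\apb$. The only cosmetic difference is that in part (1) you verify the stated $\Phi$ directly, whereas the paper derives it by first introducing $\psi_+=P_{\alpha}\varphi(f_-+\theta\tilde f_+)\in K_{\alpha}$ and $\psi_-=\bar\alpha\psi_+$; likewise in part (2) you apply $P^\mp$ to the two equations where the paper applies $Q_\alpha$, which is the same computation.
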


\begin{proof}
To prove (1) assume that $f_-,g_-\in H^2_-$ and $\tilde f_+, \tilde g_+\in H^2$. Since by definition $\Dta (f_-+\theta \tilde f_+)=Q_{\alpha}\varphi (f_-+\theta \tilde f_+)$, the equality $\Dta (f_-+\theta \tilde f_+)=g_-+\alpha \tilde g_+$ means that there exists $\psi_+\in K_{\alpha}$ such that
$$\varphi(f_-+\theta \tilde f_+)=g_-+\alpha \tilde g_++\psi_+.$$
Equivalently, there exist $\psi_+\in H^2$ and $\psi_-\in H^2_-$ such that
\begin{displaymath}
\left\{ \begin{array}{rl}
	\varphi  f_-+\varphi \theta \tilde f_+  &= g_-+\alpha \tilde g_+ +\psi_+,\\
	\bar\alpha  \psi_+  &=\psi_-,
\end{array}\right.
\end{displaymath}
i.e.,
\begin{displaymath}
\left\{ \begin{array}{rl}
\varphi\theta \tilde f_+ -\psi_++ \varphi f_- & = g_-+\alpha \tilde g_+, \\
\theta\bar\alpha \varphi\tilde f_++\bar\alpha\varphi f_- - \psi_- & =\bar\alpha g_-+\tilde g_+.
\end{array}\right.
\end{displaymath}
This last system of equations can be written in matrix form as
$$\left(\begin{bmatrix}
	\varphi\theta & -1 \\
	\varphi\theta\bar\alpha & \phantom{-} 0
\end{bmatrix}P^++
\begin{bmatrix}
	\varphi & \phantom{-} 0 \\
	\bar\alpha\varphi & -1
\end{bmatrix}P^-\right)
\begin{bmatrix}
	\phi_1 \\
	\phi_2
\end{bmatrix} =  \begin{bmatrix}
	\psi_1 \\
	\psi_2
\end{bmatrix}$$
with $$\Psi=\begin{bmatrix}
	\psi_1 \\
	\psi_2
\end{bmatrix} =
\begin{bmatrix}
	g_-+\alpha\tilde g_+ \\
	\bar\alpha g_-+\tilde g_+
\end{bmatrix}$$
and
\[\Phi=\begin{bmatrix}
           \phi_1 \\
           \phi_2
         \end{bmatrix}=
         \begin{bmatrix}
           f_-+\tilde f_+ \\
           \psi_-+\psi_+
         \end{bmatrix}=
         \begin{bmatrix}
           f_-+\tilde f_+ \\
           (1+\bar\alpha)(\varphi f_-+\varphi\theta\tilde f_+-g_--\alpha \tilde g_+)
         \end{bmatrix}.\]
Now the result for $\Phi$ follows from the fact that
$$g_-  = P^-(\varphi f_-+\varphi\theta\tilde f_+)$$and
$$  \tilde g_+  =P^+\bar\alpha(\varphi f_-+\varphi\theta \tilde f_+).$$

To prove (2) let  $\Phi=\begin{bmatrix}
	\phi_1 \\
	\phi_2
\end{bmatrix}\in L^2\oplus L^2$, $\Psi=\begin{bmatrix}
	\psi_1 \\
	\psi_2
\end{bmatrix}\in L^2\oplus L^2$ and put $\phi_{i\pm}=P^{\pm}\phi_i$, $i=1,2$. Then $(\apb)\Phi =\Psi$ can be written as a system of equations
\begin{displaymath}
\left\{ \begin{array}{rl}
	\varphi\theta\phi_{1+}-\phi_{2+}+\varphi\phi_{1-}&=\psi_1,\\
	\theta\bar\alpha\varphi\phi_{1+}+\bar\alpha\varphi\phi_{1-}-\phi_{2-}&=\psi_2,
\end{array}\right.
\end{displaymath}
which is equivalent to
\begin{displaymath}
\left\{ \begin{array}{rl}
	\varphi\phi_{1-}+\varphi\theta\phi_{1+}&=\psi_1+\phi_{2+},\\
	\bar\alpha(\varphi\phi_{1-}+\varphi\theta\phi_{1+})&=\psi_2+\phi_{2-}.
\end{array}\right.
\end{displaymath}
Moreover, the above is equivalent to
\begin{displaymath}
\left\{ \begin{array}{rl}
	\varphi(\phi_{1-}+\theta\phi_{1+})&=\psi_1+\phi_{2+},\\
	\bar\alpha(\psi_1+\phi_{2+})&=\psi_2+\phi_{2-}.
\end{array}\right.
\end{displaymath}
The first equation in the system above implies that
\begin{displaymath}
\begin{split}
  \Dta(\phi_{1-}+\theta \phi_{1+})&=Q_{\alpha}\varphi(\phi_{1-}+\theta \phi_{1+})\\
  &=Q_{\alpha}(\psi_1+\phi_{2+})=P^-\psi_1+\alpha P^+\bar\alpha(\psi_1+\phi_{2+})
\end{split}
\end{displaymath}
and the second equation gives
\begin{equation*}
  \alpha P^+\bar\alpha(\psi_1+\phi_{2+})=\alpha P^+\psi_2,
\end{equation*}
that is,
$$\Dta(P^-\phi_{1}+\theta P^+\phi_{1})=P^-\psi_1+\alpha P^+\psi_2.$$
\end{proof}

The relations in Theorem \ref{3.1} imply that $\Dta$ and $\apb$ share many properties. Indeed, in the next section we show that the former is equivalent after extension to the latter.


\section{Equivalence after extension of $\Dta$ to a paired operator}

Let us introduce some notations (see \cite{CKP}). If $\mathcal{H}$, $\mathcal{K}_1$, $\mathcal{K}_2$ are Hilbert spaces and $A_1 \colon \mathcal{H}\to \mathcal{K}_1$, $A_2\colon \mathcal{H}\to \mathcal{K}_2$, $B_1\colon \mathcal{K}_1\to \mathcal{H}$, $B_2\colon \mathcal{K}_2 \to \mathcal{H}$ are bounded linear operators, we define
$$A_1 \diamond A_2 \colon \mathcal{H}\to \mathcal{K}_1 \oplus \mathcal{K}_2, \quad (A_1 \diamond A_2)h=A_1 h\oplus A_2h,$$
and
  $$B_1 \boxplus B_2\colon \mathcal{K}_1\oplus \mathcal{K}_2 \to \mathcal{H},  \quad (B_1 \boxplus B_2)(f+g)=B_1f+B_2g.$$


In what follows, $\alpha$ and $\theta$ are inner functions, $\varphi \in L^{\infty}$ and

\begin{equation}\label{w4.1}
  A=\begin{bmatrix}
      \varphi \theta & -1 \\
      \varphi\theta\bar\alpha & \phantom{-}0
    \end{bmatrix},
  \quad  B=\begin{bmatrix}
      \varphi & \phantom{-}0 \\
    \bar\alpha\varphi & -1
    \end{bmatrix},
\end{equation}
as at the beginning of Section \ref{sec3}.

\begin{proposition}\label{4.1}Let $\varphi \in L^{\infty}$ and let $\alpha$, $\theta$ be inner functions. Then
 $$\Dta \overset{\star}\sim Q_{\alpha}\varphi Q_{\theta} \boxplus P_{\alpha},$$ where $Q_{\alpha}\varphi Q_{\theta} \boxplus P_{\alpha}  \colon \kt \oplus K_{\alpha} \to L^2$.
\end{proposition}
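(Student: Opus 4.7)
The plan is to observe that, once the boxplus notation is unpacked on the domain $\kt \oplus K_\alpha$, the operator $Q_{\alpha}\varphi Q_{\theta} \boxplus P_{\alpha}$ is literally the external direct sum of $\Dta$ and an identity operator, up to the natural orthogonal identification of $L^2$. The equivalence after extension then follows immediately from the definition, taking the extension space to be $K_\alpha$.

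More concretely, first I would unwind the action: for $f\in\kt$ one has $Q_\theta f=f$ and for $g\in K_\alpha$ one has $P_\alpha g=g$, so
\begin{equation*}
  (Q_{\alpha}\varphi Q_{\theta} \boxplus P_{\alpha})(f\oplus g)=Q_\alpha\varphi f+g=\Dta f+g.
\end{equation*}
Since $\Dta f\in(K_\alpha)^\bot$ and $g\in K_\alpha$, and since $L^2=(K_\alpha)^\bot\oplus K_\alpha$ is an orthogonal direct sum, the map
\begin{equation*}
  J\colon L^2\to(K_\alpha)^\bot\oplus K_\alpha,\qquad Jh=Q_\alpha h\oplus P_\alpha h,
\end{equation*}
is a Hilbert space isomorphism (its inverse is $Q_\alpha\boxplus P_\alpha\colon(K_\alpha)^\bot\oplus K_\alpha\to L^2$, $(h_1,h_2)\mapsto h_1+h_2$). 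Composing, one obtains
\begin{equation*}
  J\circ(Q_{\alpha}\varphi Q_{\theta} \boxplus P_{\alpha})(f\oplus g)=\Dta f\oplus g,
\end{equation*}
so that $J\circ(Q_{\alpha}\varphi Q_{\theta} \boxplus P_{\alpha})=\Dta\oplus I_{K_\alpha}$ as operators from $\kt\oplus K_\alpha$ to $(K_\alpha)^\bot\oplus K_\alpha$.

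To conclude, I would take extension spaces $X_0=K_\alpha$ and $Y_0=\{0\}$, set $F=I_{\kt\oplus K_\alpha}$ and $E=J^{-1}=Q_\alpha\boxplus P_\alpha$, and verify the matrix identity
\begin{equation*}
  \begin{bmatrix}\Dta & 0\\ 0 & I_{K_\alpha}\end{bmatrix}
  =E\,(Q_{\alpha}\varphi Q_{\theta} \boxplus P_{\alpha})\,F,
\end{equation*}
which is precisely the definition of $\Dta\overset{\star}\sim Q_{\alpha}\varphi Q_{\theta}\boxplus P_{\alpha}$. There is no genuine obstacle here; the only thing to be careful about is the notational bookkeeping, namely distinguishing the codomain $L^2$ of the boxplus operator from its orthogonal decomposition $(K_\alpha)^\bot\oplus K_\alpha$ against which the block-matrix form of $\Dta\oplus I_{K_\alpha}$ is written, and making sure the isomorphism $J$ implementing this identification is produced explicitly so that invertibility of $E$ is manifest.
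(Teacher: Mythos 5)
Your proof is correct and is essentially the paper's own argument: the paper's invertible operator $E_1(f\oplus 0)=Q_{\alpha}f\oplus P_{\alpha}f$ is exactly your $J$ (up to the trivial $\{0\}$ summand), and its $F_1$ is your identity map $F$. One small slip to fix: in the final display you should take $E=J$, not $E=J^{-1}$, since $E$ must carry the codomain $L^2$ of the boxplus operator onto $\ka\oplus K_{\alpha}$ (with $E=J^{-1}$ the composition is not even defined); your own computation $J\circ(Q_{\alpha}\varphi Q_{\theta}\boxplus P_{\alpha})=\Dta\oplus I_{K_{\alpha}}$ already exhibits the correct factorization.
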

\begin{proof}
An easy computation shows that
\begin{equation*}
  \begin{bmatrix}
    \Dta & 0 \\
    0 & I_{K_{\alpha}}
  \end{bmatrix}=E_1 \begin{bmatrix}
                      Q_{\alpha}\varphi Q_{\theta} \boxplus P_{\alpha} & 0 \\
                      0 & I_{\{0\}}
                    \end{bmatrix}F_1,
\end{equation*}
where $F_1 \colon \kt \oplus K_{\alpha}  \to (\kt \oplus K_{\alpha})\oplus \{0\}$ is defined for $f_-\in H^2_-$, $\tilde f_+\in H^2$ and $g_\alpha\in K_\alpha$  by
$$F_1((f_-+\theta\tilde f_+)\oplus g_{\alpha})= [(f_-+\theta\tilde f_+)\oplus g_{\alpha}]\oplus 0,$$
and $E_1 \colon L^2\oplus \{0\}   \to (K_\alpha)^\perp \oplus K_{\alpha}$ is defined for $f\in L^2$ by
  $$E_1(f\oplus 0)=Q_{\alpha}f\oplus P_{\alpha}f.$$
Clearly, $F_1$ and $E_1$ are invertible.
\end{proof}

\begin{theorem}\label{4.2}Let $\varphi \in L^{\infty}$ and let $\alpha$, $\theta$ be inner functions. Then
	\begin{equation*}
  \Dta \overset{\star}\sim \apb,
\end{equation*}
where $\apb \colon L^2\oplus L^2 \to L^2\oplus L^2$ is a paired operator with $A,B$ given by \eqref{w4.1}.
\end{theorem}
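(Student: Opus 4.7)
I plan to realize the equivalence after extension directly, with extension spaces $X_0 = L^2$ and $Y_0 = \{0\}$: namely, I would exhibit Hilbert-space isomorphisms $F\colon \kt\oplus L^2 \to L^2\oplus L^2$ and $E\colon L^2\oplus L^2\to \ka \oplus L^2$ satisfying $E\circ(\apb)\circ F = \Dta \oplus I_{L^2}$. The natural first building block is the ``unfolding'' map
\[
V_\theta\colon L^2\to \kt,\qquad V_\theta\phi = P^-\phi + \theta P^+\phi,
\]
which is a unitary isomorphism with inverse $V_\theta^{-1}(f_-+\theta\tilde f_+) = f_-+\tilde f_+$; it already appears implicitly in Theorem \ref{3.1} through the identification $f = P^-\phi_1 + \theta P^+\phi_1$.

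Unpacking $\apb$ as in Section \ref{sec3}, for $\Phi\in L^2\oplus L^2$ with components $\phi_1,\phi_2$ and $f = V_\theta\phi_1$ the output components are $\psi_1 = \varphi f - P^+\phi_2$ and $\psi_2 = \bar\alpha\varphi f - P^-\phi_2$. Reading this formula, the combination $(\psi_1,\psi_2)\mapsto P^-\psi_1 + \alpha P^+\psi_2$ automatically recovers $\Dta f = Q_\alpha\varphi f$, regardless of what $\phi_2$ is. This suggests setting
\[
E(\psi_1,\psi_2) = \bigl(P^-\psi_1 + \alpha P^+\psi_2,\; P^+\psi_1 + P^-\psi_2\bigr),
\]
whose inverse is $E^{-1}(g,h) = (P^-g + P^+h,\; \bar\alpha P^+g + P^-h)$, a bounded bijection because $\ka = H^2_-\oplus\alpha H^2$ and $\alpha$ is unimodular.

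The non-obvious step, and the main obstacle, is the second coordinate of $F$. A naive choice such as $F(f,h) = (V_\theta^{-1}f,h)$ fails: after composing with $\apb$ and $E$, the second slot ends up as a linear combination of $P^\pm(\varphi f)$ and $h$, and $\varphi f$ is not recoverable from $\Dta f$ alone. The remedy is to absorb precisely those stray $\varphi f$-contributions into $\phi_2$ in advance, taking
\[
F(f,h) = \bigl(V_\theta^{-1}f,\; P^+(\varphi f) + P^-(\bar\alpha\varphi f) - h\bigr),
\]
a bounded isomorphism by its triangular form in $h$.

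With these choices the verification reduces to routine substitution. One obtains $\psi_1 = P^-(\varphi f) + P^+h$ and $\psi_2 = P^+(\bar\alpha\varphi f) + P^-h$, so that the first coordinate of $E(\psi_1,\psi_2)$ collapses to $P^-(\varphi f) + \alpha P^+(\bar\alpha\varphi f) = Q_\alpha\varphi f = \Dta f$ and the second to $P^+h + P^-h = h$. Hence $E\circ(\apb)\circ F = \Dta\oplus I_{L^2}$, which gives $\Dta \overset{\star}\sim \apb$.
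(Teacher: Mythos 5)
Your proposal is correct, but it realizes the equivalence by a genuinely different (and more direct) construction than the paper's. The paper proceeds in two steps: Proposition \ref{4.1} first shows $\Dta \overset{\star}\sim Q_{\alpha}\varphi Q_{\theta}\boxplus P_{\alpha}\colon \kt\oplus K_{\alpha}\to L^2$ (extension space $K_\alpha$), and then a second pair of invertible operators, recorded with their explicit inverses in Lemmas \ref{4.3} and \ref{4.4}, carries that operator to $\apb$ with extension space $L^2$; the theorem follows by transitivity of $\overset{\star}\sim$. You instead exhibit the extension in one step, with extension spaces $L^2$ and $\{0\}$, using the unitary $V_\theta$ together with a triangular correction in the second slot of $F$ that pre-loads $\phi_2$ with exactly the part of $\varphi f$ that $Q_\alpha$ discards. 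I checked the substitution: with $\phi_1=V_\theta^{-1}f$ and $\phi_2=P^+(\varphi f)+P^-(\bar\alpha\varphi f)-h$ one gets $\psi_1=P^-(\varphi f)+P^+h$ and $\psi_2=P^+(\bar\alpha\varphi f)+P^-h$, and your $E$ sends this to $\bigl(P^-(\varphi f)+\alpha P^+(\bar\alpha\varphi f),\,h\bigr)=(\Dta f,\,h)$; moreover $F$ is invertible by its triangular form and $E$ is invertible with the inverse you state, using $\ka=H^2_-\oplus\alpha H^2$. So the identity $E\circ(\apb)\circ F=\Dta\oplus I_{L^2}$ holds and the theorem is proved. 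Your route is shorter and the operators $E$, $F$ are simpler to invert; what the paper's factorization buys is the intermediate operator $Q_{\alpha}\varphi Q_{\theta}\boxplus P_{\alpha}$ and explicit $E$, $F$, $E^{-1}$, $F^{-1}$ that are aligned with the solvability relations of Section \ref{sec3} and are convenient for the later kernel computations. Either argument establishes Theorem \ref{4.2}.
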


\begin{proof}
Given the result of Proposition \ref{4.1} and the fact that $\overset{\star}\sim$ is an equivalence relation and thus transitive, we only have to prove that
\begin{equation*}
 Q_{\alpha}\varphi Q_{\theta}\boxplus P_{\alpha} \overset{\star}\sim \apb. 
\end{equation*}
To this end, we note that $Q_{\alpha}\varphi Q_{\theta}\boxplus P_{\alpha} $ is obviously equivalent after extension to
\begin{equation*}
  \begin{bmatrix}
     Q_{\alpha}\varphi Q_{\theta}\boxplus P_{\alpha}  & 0 \\
    0 & I_{L^2}
  \end{bmatrix} \colon (\kt \oplus K_{\alpha})\oplus L^2 \to L^2\oplus L^2.
\end{equation*}
Using the relations from Section \ref{sec3} and rewriting them appropriately, we get (as can be verified independently)
\begin{equation*}
  \begin{bmatrix}
     Q_{\alpha}\varphi Q_{\theta}\boxplus P_{\alpha}  & 0 \\
    0 & I_{L^2}
  \end{bmatrix} =E
      \begin{bmatrix}
   \varphi P^-+ \varphi\theta P^+ & -P^+ \\
\bar\alpha\varphi P^-+\varphi\bar\alpha\theta P^+ & -P^-
  \end{bmatrix}F,
    \end{equation*}
where $F\colon (\kt \oplus K_{\alpha})\oplus L^2 \to L^2\oplus L^2$, $$F=\begin{bmatrix}
    (P^-+P^+\bar\theta)\boxplus 0 & 0 \\
    (P^-\varphi\bar\alpha+P^+\varphi)Q_{\theta}\boxplus (-P_{\alpha})  & -(P^-+\alpha P^+)
  \end{bmatrix},$$ and $E \colon L^2\oplus L^2 \to L^2\oplus L^2$, $$E=\begin{bmatrix}
    \alpha P^-\bar\alpha & \alpha P^+ \\
    P^+\bar\alpha & P^-
  \end{bmatrix}.$$ The operators $F$ and $E$ are invertible by Lemmas \ref{4.3} and \ref{4.4} below.
\end{proof}

The proofs of the following two lemmas are straightforward.
\begin{lemma}\label{4.3}
The operator $F\colon (\kt \oplus K_{\alpha})\oplus L^2 \to L^2\oplus L^2$,
\begin{equation*}
\quad F=\begin{bmatrix}
    (P^-+P^+\bar \theta)\boxplus 0 & 0 \\
    (P^+\varphi+P^-\varphi\bar\alpha)Q_{\theta}\boxplus (-P_{\alpha})  & -(P^-+\alpha P^+)
  \end{bmatrix},
 \end{equation*}
is invertible and  $ F^{-1}\colon L^2\oplus L^2  \to (\kt \oplus K_{\alpha})\oplus L^2$,
\begin{equation*}
\quad F^{-1}=\begin{bmatrix}
    (P^-+\theta P^+) \diamond P_{\alpha}\varphi (P^-+\theta P^+) & 0 \diamond (-P_{\alpha})\\
    \varphi\bar\alpha(P^-+\theta P^+) & -(P^-+ P^+\bar\alpha)
  \end{bmatrix}.
\end{equation*}
\end{lemma}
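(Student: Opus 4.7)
The plan is to verify the claim by direct computation, showing $F\circ F^{-1}=I_{L^2\oplus L^2}$ and $F^{-1}\circ F=I_{(\kt\oplus K_\alpha)\oplus L^2}$; this matches the authors' description of the proof as straightforward.

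The ingredients needed are: (i) the unitaries $(P^-+P^+\bar\theta)\colon \kt\to L^2$ with inverse $(P^-+\theta P^+)$, and $(P^-+P^+\bar\alpha)\colon \ka\to L^2$ with inverse $(P^-+\alpha P^+)$; (ii) the orthogonal decomposition $P^+=P_\alpha+\alpha P^+\bar\alpha$, equivalent to $H^2=K_\alpha\oplus\alpha H^2$; and (iii) the standard projection identities $P^-P^+=0$, $P_\alpha P^-=0$, $P_\alpha(\alpha H^2)=\{0\}$, $\bar\alpha K_\alpha\subset H^2_-$, and $\bar\alpha H^2_-\subset H^2_-$ (so that $P^+\bar\alpha$ annihilates both $K_\alpha$ and $H^2_-$).

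To check $F\circ F^{-1}=I$, take $(\phi_1,\phi_2)\in L^2\oplus L^2$ and set $f=(P^-+\theta P^+)\phi_1$. The first coordinate of the composition is $(P^-+P^+\bar\theta)f=\phi_1$ at once by (i), since the top-right block of $F$ is zero. For the second coordinate, the two decisive cancellations are $(P^-+\alpha P^+)(P^-+P^+\bar\alpha)\phi_2=P^-\phi_2+\alpha P^+\bar\alpha\phi_2=\phi_2-P_\alpha\phi_2$ (from (ii)) and $P^+\varphi f-P_\alpha\varphi f-\alpha P^+\varphi\bar\alpha f=0$ (also from (ii)); together with the $P_\alpha$-term coming from the $F_{21}$-block, these reassemble to give exactly $\phi_2$.

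To check $F^{-1}\circ F=I$, take $u=((f_-+\theta\tilde f_+)\oplus g_\alpha)\oplus h$, apply $F$ to obtain $\phi_1=f_-+\tilde f_+$ and $\phi_2=(P^+\varphi+P^-\varphi\bar\alpha)(f_-+\theta\tilde f_+)-g_\alpha-P^-h-\alpha P^+h$, then apply $F^{-1}$. The $\kt$-coordinate returns $(P^-+\theta P^+)\phi_1=f_-+\theta\tilde f_+$ automatically. Using (iii), only two of the five terms of $\phi_2$ survive the projection $P_\alpha$, yielding $P_\alpha\phi_2=P_\alpha\varphi(f_-+\theta\tilde f_+)-g_\alpha$, so the $K_\alpha$-coordinate equals $g_\alpha$. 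For the $L^2$-coordinate, (iii) collapses $(P^-+P^+\bar\alpha)\phi_2$ to $(P^-+P^+)\varphi\bar\alpha(f_-+\theta\tilde f_+)-h=\varphi\bar\alpha(f_-+\theta\tilde f_+)-h$, which gives $h$. The only real obstacle is the book-keeping: the $2\times 2$ block structure with $\diamond$/$\boxplus$ entries produces many cross-terms, each of which must be matched to the right simplification from (ii)--(iii); no conceptual difficulty arises.
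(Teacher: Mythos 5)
Your proof is correct: the direct verification that $F\circ F^{-1}$ and $F^{-1}\circ F$ are the respective identities, using the mutually inverse unitaries $P^-+P^+\bar\theta\colon\kt\to L^2$ and $P^-+\theta P^+$, the identity $P^+=P_\alpha+\alpha P^+\bar\alpha$, and the annihilation relations for $P_\alpha$ and $P^+\bar\alpha$, is exactly the computation the paper has in mind when it declares the lemma's proof "straightforward" and omits it. All the cancellations you single out check out, so nothing further is needed.
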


\begin{lemma}\label{4.4}
The operator $E\colon L^2\oplus L^2 \to L^2\oplus L^2$,
$$E=\begin{bmatrix}
\alpha P^-\bar\alpha & \alpha P^+ \\
P^+\bar\alpha & P^-
\end{bmatrix},$$
is invertible and  $E^{-1}=E$.
\end{lemma}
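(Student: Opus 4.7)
The plan is to show directly that $E^2 = I_{L^2\oplus L^2}$, which establishes both invertibility of $E$ and the identity $E^{-1}=E$.

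The key ingredient is that $\alpha$ is inner, so $|\alpha|=1$ a.e.\ on $\mathbb{T}$, meaning that as multiplication operators $M_{\alpha}M_{\bar\alpha}=M_{\bar\alpha}M_{\alpha}=I$. The other ingredients are the standard identities $P^++P^-=I$ and $P^+P^-=P^-P^+=0$. With these in hand, one simply multiplies the $2\times 2$ block matrix by itself and simplifies each of the four entries.

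Concretely, I would compute the $(1,1)$ entry as
\[
\alpha P^-\bar\alpha\cdot\alpha P^-\bar\alpha + \alpha P^+\cdot P^+\bar\alpha
= \alpha P^-\bar\alpha + \alpha P^+\bar\alpha
= \alpha(P^-+P^+)\bar\alpha = I,
\]
using $\bar\alpha\alpha=1$ and $P^\pm P^\pm=P^\pm$. The $(1,2)$ entry becomes
\[
\alpha P^-\bar\alpha\cdot\alpha P^+ + \alpha P^+\cdot P^-
= \alpha P^-P^+ + \alpha P^+P^- = 0.
\]
Symmetrically, the $(2,1)$ entry is
\[
P^+\bar\alpha\cdot\alpha P^-\bar\alpha + P^-\cdot P^+\bar\alpha
= P^+P^-\bar\alpha + P^-P^+\bar\alpha = 0,
\]
and the $(2,2)$ entry is
\[
P^+\bar\alpha\cdot\alpha P^+ + P^-\cdot P^- = P^+P^+ + P^-P^- = P^++P^- = I.
\]
Thus $E^2=I$, which gives $E^{-1}=E$.

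There is no real obstacle here; the only point that requires any care is keeping track of the order in which multiplication operators and projections are composed (since $M_\alpha$ does not commute with $P^\pm$), and recognising the simplifications $M_\alpha M_{\bar\alpha}=I$ and $P^+P^-=0$ whenever adjacent factors produce them.
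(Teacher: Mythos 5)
Your computation is correct: each of the four block entries of $E^2$ simplifies as you claim using $\bar\alpha\alpha=1$, $P^\pm P^\pm=P^\pm$ and $P^+P^-=P^-P^+=0$, giving $E^2=I$ and hence $E^{-1}=E$. The paper omits the proof as ``straightforward,'' and this direct verification that $E$ is an involution is exactly the intended argument.
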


\begin{corollary}\label{4.5}
Let $\theta$ be an inner function. Then
 $$D_{\varphi}^{\theta}\overset{\star}\sim A_0P^++B_0P^-,$$ where
\begin{equation}\label{A0}
   A_0=\begin{bmatrix}
      \varphi \theta & -1 \\
      \varphi & \phantom{-}0
    \end{bmatrix},
  \quad  B_0=\begin{bmatrix}
      \varphi  & \phantom{-}0 \\
    \bar\theta\varphi & -1
    \end{bmatrix}.
\end{equation}
\end{corollary}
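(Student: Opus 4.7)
The plan is to obtain this as a direct specialization of Theorem \ref{4.2}. By definition, $D_\varphi^\theta = D_\varphi^{\theta,\theta}$, i.e., it is the asymmetric dual truncated Toeplitz operator with $\alpha = \theta$. Applying Theorem \ref{4.2} in this case yields
\[
D_\varphi^\theta \overset{\star}\sim A P^+ + B P^-,
\]
with $A$ and $B$ as in \eqref{w4.1} with $\alpha$ replaced by $\theta$.

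The only point to verify is that these specialized matrices coincide with $A_0$ and $B_0$. For $A$, the $(2,1)$ entry becomes $\varphi \theta \bar\theta = \varphi$, since $\theta \bar\theta = 1$ a.e. on $\mathbb{T}$; all other entries of $A$ and $B$ pass to those of $A_0$ and $B_0$ unchanged upon substituting $\bar\alpha = \bar\theta$. Thus
\[
A\bigr|_{\alpha = \theta} = \begin{bmatrix} \varphi\theta & -1 \\ \varphi & \phantom{-}0 \end{bmatrix} = A_0, \qquad B\bigr|_{\alpha = \theta} = \begin{bmatrix} \varphi & \phantom{-}0 \\ \bar\theta\varphi & -1 \end{bmatrix} = B_0,
\]
and the conclusion follows. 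There is no real obstacle here — the corollary is purely a bookkeeping specialization of the preceding theorem.
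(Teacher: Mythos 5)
Your proof is correct and is exactly the argument the paper intends: the corollary is the specialization $\alpha=\theta$ of Theorem \ref{4.2}, with the only computation being $\varphi\theta\bar\theta=\varphi$ a.e.\ on $\mathbb{T}$ since $\theta$ is inner. The paper gives no separate proof for this corollary, so your bookkeeping verification matches its (implicit) reasoning.
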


In what follows $\mathcal{G}L^{\infty}$ will denote the set of all invertible elements of the algebra $L^{\infty}$.

\begin{corollary}\label{4.6}
The operator $\Dta$ is semi-Fredholm (respectively Fredholm) if and only if $\apb$ is  semi-Fredholm (respectively Fredholm) on $L^2\oplus L^2$ and, in that case, we have $\varphi \in \mathcal{G}L^{\infty}$.
\end{corollary}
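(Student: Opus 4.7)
The first assertion follows immediately from Theorem \ref{4.2} combined with Theorem \ref{2.2}. Since $\Dta \overset{\star}\sim \apb$, parts (1) and (2) of Theorem \ref{2.2} give isomorphism of the kernels, preservation of closedness of the range, and, in that case, isomorphism of the cokernels; part (4) gives preservation of Fredholmness. Since semi-Fredholmness is precisely closed range together with finite-dimensional kernel or finite-dimensional cokernel, it is likewise preserved under $\overset{\star}\sim$.

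For the second assertion, the plan is to invoke the standard necessary condition for paired operators on $L^2(\mathbb{T}) \oplus L^2(\mathbb{T})$: if $AP^+ + BP^-$ with matrix symbols $A, B \in L^{\infty}(\mathbb{T})_{2\times 2}$ is semi-Fredholm, then $A$ and $B$ must be invertible in the matrix algebra $L^{\infty}(\mathbb{T})_{2\times 2}$, equivalently, $\det A$ and $\det B$ both belong to $\mathcal{G}L^{\infty}(\mathbb{T})$. This is classical in the theory of general Wiener--Hopf and paired operators (see \cite{MP, Speck WH}); a self-contained justification would proceed by constructing, whenever $\det A$ (respectively $\det B$) fails to be essentially bounded away from zero, a weakly null, unit-norm sequence in $P^+(L^2 \oplus L^2)$ (respectively in $P^-(L^2 \oplus L^2)$) that is mapped into arbitrarily small norm by $AP^+ + BP^-$ (respectively by its adjoint), contradicting the semi-Fredholm lower bound on a finite-codimensional subspace.

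A direct computation using \eqref{w4.1} yields $\det A = \varphi\theta\bar\alpha$ and $\det B = -\varphi$. Since $\alpha$ and $\theta$ are inner, $|\theta\bar\alpha| \equiv 1$ a.e.\ on $\mathbb{T}$, so both determinants belong to $\mathcal{G}L^{\infty}(\mathbb{T})$ precisely when $\varphi \in \mathcal{G}L^{\infty}(\mathbb{T})$, which is the desired conclusion. The step I expect to be the main obstacle is making the invertibility-of-symbols necessary condition for paired operators fully self-contained, since this requires a careful construction of the approximate null sequences described above; an alternative route that bypasses this difficulty would be to combine the present reduction with the equivalence of $\Dta$ to a truncated Toeplitz operator with inverse symbol promised in Section 5, together with the classical fact that a (block) Toeplitz operator is semi-Fredholm only when its symbol is invertible in $L^{\infty}(\mathbb{T})$.
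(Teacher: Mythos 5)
Your proposal is correct and follows essentially the same route as the paper: the equivalence is read off from Theorems \ref{2.2} and \ref{4.2}, and the necessity of $\varphi \in \mathcal{G}L^{\infty}$ comes from the classical criterion that a semi-Fredholm paired operator $\apb$ must have $\essinf|\det A|>0$ and $\essinf|\det B|>0$ (the paper cites \cite[Chapter V, Theorem 5.1]{MP} for this, so no self-contained construction of approximate null sequences is needed), combined with the computation $|\det A|=|\det B|=|\varphi|$.
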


\begin{proof}
The equivalence is a consequence of Theorem \ref{2.2} and Theorem \ref{4.2}. To prove that $\varphi$ is invertible note that $|\det A|=|\det B|=|\varphi|$.  Since a necessary condition for the operator $\apb$ to be semi-Fedholm is that
\begin{equation*}
  \essinf\limits_{t\in \mathbb{T}}|\det A(t)|>0, \quad \essinf\limits_{t\in \mathbb{T}}|\det B(t)|>0
\end{equation*}
(\cite[Chapter V, Theorem 5.1]{MP}), we conclude that if $\Dta$ is semi-Fredholm, then $\varphi \in \mathcal{G}L^{\infty}$.
\end{proof}

In particular, we conclude that if $\Dt$ is invertible, then $\varphi \in \mathcal{G}L^{\infty}$ (\cite[Proposition 2.4]{Ding Sang}) and then, denoting by $\varphi (\mathbb T)$ the essential range of $\varphi \in L^\infty$, we have
\begin{equation}
\label{w4.3}
\varphi (\mathbb T)\subset \sigma_e(\Dt)\subset \sigma(\Dt)
\end{equation}
(see also \cite[Theorem 4.1]{Ding Qin Sang}).

\section{Equivalence relations between $\Dta$ and truncated Toeplitz operators}

In view of Corollary \ref{4.6}, the case where $\varphi$ is an invertible element of $L^{\infty}$ becomes particularly interesting. Assume then that
\begin{equation*}
  \varphi \in \mathcal{G}L^{\infty}.
\end{equation*}
This implies that, for $A,B$ defined by \eqref{w4.1}, we have $A, B\in \mathcal{G}(L^{\infty})^{2\times 2}$  with
  \begin{equation*}
   A^{-1}=\begin{bmatrix}
     \phantom{-}0 &\bar\theta\alpha\varphi^{-1} \\
     -1 &\alpha
    \end{bmatrix},
  \quad  B^{-1}=\begin{bmatrix}
      \varphi^{-1}  & \phantom{-}0 \\
    \bar\alpha& -1
    \end{bmatrix}.
  \end{equation*}
In that case
\begin{equation*}
  \apb=B(P^+B^{-1}AP^++P^-)(I+P^-B^{-1}AP^+),
\end{equation*}
where $B$ (identified with multiplication by $B$ on $L^2\oplus L^2$) is invertible, as well as $I+P^-B^{-1}AP^+$ because
\begin{equation*}
  (I+P^-B^{-1}AP^+)^{-1}=I-P^-B^{-1}AP^+.
\end{equation*}
Therefore $\apb$ is equivalent to $P^+CP^++P^-$, where
%
%
%
\begin{equation*}
   C=B^{-1}A=\begin{bmatrix}
     \theta & -\varphi^{-1} \\
     0 &-\bar\alpha
    \end{bmatrix}.
\end{equation*}
It follows from Theorem \ref{4.2} that
 $\Dta$ is equivalent after extension to $P^+CP^++P^-$.
It is easy to see that $P^+CP^++P^- \sim P^+GP^++P^-$, where
\begin{equation*}
   G=\begin{bmatrix}
     0 & 1 \\
     1 &0
    \end{bmatrix}C
    \begin{bmatrix}
     \phantom{-}0 & 1 \\
     -1 & 0
    \end{bmatrix}=\begin{bmatrix}
    \bar\alpha & 0 \\
    \varphi^{-1} &\theta
    \end{bmatrix}
    \end{equation*}
(since $G$ and $C$ differ by constant factors). Therefore we have that
\begin{equation*}
  \Dta \overset{\star}\sim P^+GP^++P^- \overset{\star}\sim P^+GP^+|_{H^2\oplus H^2}=T_G\overset{\star}\sim A^{\alpha,\theta}_{{\varphi}^{-1}},
\end{equation*}
where we took Theorem \ref{2.3} into account. We have thus proved the following:

\begin{theorem}\label{5.1}
Let $\alpha$, $\theta$ be inner functions.
If $\varphi \in \mathcal{G}L^{\infty}$, then $\Dta \overset{\star}\sim  A^{\alpha,\theta}_{{\varphi}^{-1}}$.
\end{theorem}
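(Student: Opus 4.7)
My plan is to invoke Theorem \ref{4.2}, which already provides the equivalence after extension $\Dta \overset{\star}\sim \apb$. Since $\overset{\star}\sim$ is transitive, it suffices to show that, under the hypothesis $\varphi\in\mathcal{G}L^{\infty}$, the paired operator $\apb$ is itself equivalent after extension to $A^{\alpha,\theta}_{\varphi^{-1}}$. The invertibility of $\varphi$ is what makes this possible: it renders the matrix symbols $A,B$ of \eqref{w4.1} invertible in $(L^\infty)^{2\times 2}$ (since $|\det A|=|\det B|=|\varphi|$), so $A$, $B$ and their inverses all act as invertible multipliers on $L^2\oplus L^2$.

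The key reduction is to factor the paired operator as
\[
\apb \;=\; B\,\bigl(P^+ B^{-1} A P^+ + P^-\bigr)\bigl(I + P^- B^{-1} A P^+\bigr),
\]
in which multiplication by $B$ is invertible on $L^2\oplus L^2$, and the rightmost factor has explicit inverse $I - P^- B^{-1} A P^+$ thanks to $P^- P^+ = 0$. Consequently $\apb \sim P^+ C P^+ + P^-$ with $C := B^{-1}A$. A direct $2\times 2$ multiplication yields $C$ upper-triangular with diagonal $(\theta,-\bar\alpha)$ and off-diagonal entry $-\varphi^{-1}$.

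I would then convert $C$ into a matrix matching the symbol appearing in Theorem \ref{2.3}. Multiplying $C$ on the left and right by two unimodular constant $2\times 2$ matrices (a row swap and a signed column swap) transforms it into the lower-triangular matrix
\[
G \;=\; \begin{bmatrix} \bar\alpha & 0 \\ \varphi^{-1} & \theta \end{bmatrix},
\]
so $P^+ C P^+ + P^- \sim P^+ G P^+ + P^-$. Combining this with the equivalences in \eqref{e22} and the standard fact recalled in Section 2 that $P^+ G P^+|_{H^2\oplus H^2}$ is equivalent after extension to $GP^+ + P^-$, one obtains $P^+ G P^+ + P^- \overset{\star}\sim T_G$, where $T_G$ is the block Toeplitz operator with symbol $G$ on $H^2\oplus H^2$. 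Since $G$ coincides with the symbol that Theorem \ref{2.3} assigns to $A^{\alpha,\theta}_{\varphi^{-1}}$ (note the order of superscripts), a final application of that theorem closes the chain.

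The main obstacle is bookkeeping rather than conceptual: one has to verify that the specific row/column permutation really places $\bar\alpha$ in the $(1,1)$ slot and $\theta$ in the $(2,2)$ slot of $G$, so that the indices of the target asymmetric truncated Toeplitz operator come out as $(\alpha,\theta)$ and not $(\theta,\alpha)$, and one must check invertibility of each intermediate factor. Once those routine checks are in place, the theorem follows from a chain of $\overset{\star}\sim$ and $\sim$ relations, each of which is an instance of material already established in Sections 2--4 and in Theorem \ref{2.3}.
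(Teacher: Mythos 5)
Your proposal is correct and follows essentially the same route as the paper: the factorization $\apb = B\,(P^+B^{-1}AP^+ + P^-)(I+P^-B^{-1}AP^+)$, the reduction to $P^+CP^+ + P^-$ with $C=B^{-1}A$ upper triangular, the constant row/column swaps producing $G=\begin{bmatrix}\bar\alpha & 0\\ \varphi^{-1} & \theta\end{bmatrix}$, and the final appeal to Theorem \ref{2.3} are exactly the steps in the paper's argument. No gaps.
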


\begin{corollary}\label{5.2}
If $\varphi \in \mathcal{G}L^{\infty}$, then $\Dt \overset{\star}\sim  A^{\theta}_{{\varphi}^{-1}}$.
\end{corollary}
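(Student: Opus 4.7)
The plan is to obtain Corollary \ref{5.2} as the direct specialization of Theorem \ref{5.1} to the case $\alpha=\theta$. Recall the notational convention fixed in Section 1: when the two inner functions coincide, one writes $D_{\varphi}^{\theta}$ for $D_{\varphi}^{\theta,\theta}$ and $A_{\varphi}^{\theta}$ for $A_{\varphi}^{\theta,\theta}$. Under the hypothesis $\varphi \in \mathcal{G}L^{\infty}$, Theorem \ref{5.1} with the inner function $\alpha$ replaced by $\theta$ yields $D_{\varphi}^{\theta,\theta} \overset{\star}\sim A^{\theta,\theta}_{\varphi^{-1}}$, which, after applying the convention on both sides, is exactly the assertion $D_{\varphi}^{\theta} \overset{\star}\sim A^{\theta}_{\varphi^{-1}}$.

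As a self-contained sanity check one may instead re-run the argument of Theorem \ref{5.1} with $\alpha=\theta$: starting from Corollary \ref{4.5}, the matrices $A_{0},B_{0}$ in \eqref{A0} are invertible in $(L^{\infty})^{2\times 2}$ precisely because $\varphi \in \mathcal{G}L^{\infty}$, so the same factorization
\[
A_{0}P^{+}+B_{0}P^{-}=B_{0}(P^{+}B_{0}^{-1}A_{0}P^{+}+P^{-})(I+P^{-}B_{0}^{-1}A_{0}P^{+})
\]
is available, and the row/column permutation used in the proof of Theorem \ref{5.1} converts $B_{0}^{-1}A_{0}$ into the symbol $G=\begin{bmatrix}\bar\theta & 0\\ \varphi^{-1} & \theta\end{bmatrix}$, whose associated block Toeplitz operator is equivalent after extension to $A^{\theta}_{\varphi^{-1}}$ by Theorem \ref{2.3}. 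There is no substantive obstacle here; the only point requiring attention is the bookkeeping of the notational convention $\alpha=\theta$ on both the dual and the truncated sides.
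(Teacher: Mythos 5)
Your proposal is correct and coincides with the paper's (implicit) argument: Corollary \ref{5.2} is stated without proof precisely because it is the specialization $\alpha=\theta$ of Theorem \ref{5.1}, together with the notational convention $D_{\varphi}^{\theta}=D_{\varphi}^{\theta,\theta}$, $A_{\varphi}^{\theta}=A_{\varphi}^{\theta,\theta}$. Your additional sanity check via Corollary \ref{4.5} and Theorem \ref{2.3} is consistent with the computation carried out in Section 5 and raises no issues.
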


\begin{corollary}\label{5.3}
The operator $\Dt$ is Fredholm (respectively, invertible) if and only if $\varphi \in \mathcal{G}L^{\infty}$ and $ A^{\theta}_{{\varphi}^{-1}}$  is Fredholm (respectively, invertible).
\end{corollary}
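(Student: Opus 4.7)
My plan is to deduce the corollary by combining three earlier results: Corollary 4.6, which gives the necessity of $\varphi \in \mathcal{G}L^\infty$ for $\Dt$ to be semi-Fredholm; Corollary 5.2, which asserts the equivalence after extension $\Dt \overset{\star}\sim A^\theta_{\varphi^{-1}}$ whenever $\varphi$ is invertible; and Theorem 2.2, which transfers Fredholmness, kernels and cokernels across an equivalence after extension. Both the Fredholm and the invertibility equivalences then reduce to bookkeeping along the equivalence $\Dt \overset{\star}\sim A^\theta_{\varphi^{-1}}$.

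For the Fredholm statement, I would prove both directions along the same equivalence. Assuming $\Dt$ is Fredholm, hence semi-Fredholm, Corollary 4.6 forces $\varphi \in \mathcal{G}L^\infty$; Corollary 5.2 then supplies $\Dt \overset{\star}\sim A^\theta_{\varphi^{-1}}$, and Theorem 2.2(4) carries Fredholmness over to $A^\theta_{\varphi^{-1}}$. The converse is obtained by reversing the direction along the same equivalence: given $\varphi \in \mathcal{G}L^\infty$, Corollary 5.2 again applies, and Theorem 2.2(4) sends Fredholmness of $A^\theta_{\varphi^{-1}}$ back to $\Dt$.

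For the invertibility statement, I would first note that an invertible $\Dt$ is Fredholm, so the necessity of $\varphi \in \mathcal{G}L^\infty$ comes from the Fredholm case just treated. It then remains to move invertibility back and forth along $\Dt \overset{\star}\sim A^\theta_{\varphi^{-1}}$. Since Theorem 2.2 does not list invertibility explicitly, I would either argue via parts (1) and (2) — invertibility is the simultaneous vanishing of kernel and cokernel, each preserved by equivalence after extension — or, more directly, observe that in the defining identity $\bigl[\begin{smallmatrix} T & 0 \\ 0 & I \end{smallmatrix}\bigr] = E \bigl[\begin{smallmatrix} S & 0 \\ 0 & I \end{smallmatrix}\bigr] F$ with $E,F$ invertible, the two block-diagonal operators are simultaneously invertible, and hence so are $T$ and $S$.

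The proof is essentially a combinatorial assembly of earlier results: all the substantive content is packaged in Theorem 5.1 and its Corollary 5.2, together with Corollary 4.6, so there is no genuine obstacle. The only point requiring a brief word of explanation is why invertibility, which is not explicitly listed in Theorem 2.2, transfers across an equivalence after extension.
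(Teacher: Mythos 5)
Your proposal is correct and follows exactly the paper's own route: Corollary 4.6 for the necessity of $\varphi \in \mathcal{G}L^{\infty}$, Corollary 5.2 for the equivalence after extension, and Theorem 2.2 to transfer Fredholmness and invertibility (the paper simply says the invertibility case is ``analogous,'' which your remark about kernel and cokernel both vanishing makes explicit). No gaps.
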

\begin{proof}
If $\Dt$ is Fredholm, then by Corollary \ref{4.6} we have $\varphi \in \mathcal{G}L^{\infty}$ and since $\Dt \overset{\star}\sim  A^{\theta}_{{\varphi}^{-1}}$, it follows that $A^{\theta}_{{\varphi}^{-1}}$ is Fredholm. Conversely, if $\varphi \in \mathcal{G}L^{\infty}$ and $A^{\theta}_{{\varphi}^{-1}}$ is Fredholm, then Corollary \ref{5.2} implies that $\Dt$ is also Fredholm. The proof for invertibility is analogous.
\end{proof}

\section{Kernel Isomorphisms}
By Theorem \ref{2.2}, the kernels of two operators that are equivalent after extension, are isomorphic. Using the relations from Section 2, we describe here several of those isomorphisms. We use the same notation as in Section 4.

\begin{theorem}\label{6.1}
The map
 $$ \mathcal{N}  \colon \ker \Dta \to \ker(\apb),$$
$$  \mathcal{N}  (f_-+\theta \tilde f_+)=\begin{bmatrix}
                                         f_-+\tilde f_+ \\
                                         \varphi(1+\bar\alpha)(f_-+\theta\tilde f_+)
                                       \end{bmatrix},\quad\text{where}\  f_-\in H^2_-, \tilde f_+\in H^2,$$
is an isomorphism and
\[ \ker \Dta=(P^-+\theta P^+)\,P_1(\ker (AP^++BP^-)),\] where $P_1\left(\begin{bmatrix}x\\y\end{bmatrix}\right)=x$.
\end{theorem}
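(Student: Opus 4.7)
The plan is to apply Theorem~\ref{3.1} with $g_-=0$ and $\tilde g_+=0$ (equivalently $\Psi=0$) to move back and forth between $\ker\Dta$ and $\ker(AP^++BP^-)$. First I would verify that $\mathcal{N}$ is well defined and lands in $\ker(AP^++BP^-)$: if $f=f_-+\theta\tilde f_+\in\ker\Dta$, then $\Dta f=0$ means $Q_\alpha\varphi f=0$, so $\varphi f\in K_\alpha$ and hence $P_\alpha\varphi f=\varphi f$. Part (1) of Theorem~\ref{3.1} with $g_-=\tilde g_+=0$ then produces $\Psi=0$ and $\Phi$ equal to $\mathcal{N}(f)$, once the factor $(1+\bar\alpha)P_\alpha\varphi(f_-+\theta\tilde f_+)$ is rewritten as $\varphi(1+\bar\alpha)(f_-+\theta\tilde f_+)$.

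Next, I would construct the candidate inverse $\mathcal{M}\colon\ker(AP^++BP^-)\to\ker\Dta$ by $\mathcal{M}(\Phi)=P^-\phi_1+\theta P^+\phi_1$, which indeed lies in $\kt=H^2_-\oplus\theta H^2$; that $\mathcal{M}(\Phi)\in\ker\Dta$ follows directly from Theorem~\ref{3.1}(2) applied with $\psi_1=\psi_2=0$. The composition $\mathcal{M}\circ\mathcal{N}$ is the identity by inspection, since $P^-(f_-+\tilde f_+)+\theta P^+(f_-+\tilde f_+)=f_-+\theta\tilde f_+$. For $\mathcal{N}\circ\mathcal{M}$, I would revisit the system obtained in the proof of Theorem~\ref{3.1}(2): with $\psi_1=\psi_2=0$ that system becomes
\begin{align*}
\varphi(P^-\phi_1+\theta P^+\phi_1) &= P^+\phi_2,\\
\bar\alpha\, P^+\phi_2 &= P^-\phi_2,
\end{align*}
whence $\phi_2=P^+\phi_2+P^-\phi_2=(1+\bar\alpha)P^+\phi_2=\varphi(1+\bar\alpha)(P^-\phi_1+\theta P^+\phi_1)$, which is precisely the second component of $\mathcal{N}(\mathcal{M}(\Phi))$; the first component is $P^-\phi_1+P^+\phi_1=\phi_1$, so $\mathcal{N}\circ\mathcal{M}=I$.

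Both $\mathcal{N}$ and $\mathcal{M}$ are manifestly bounded and linear (built from $P^\pm$, multiplication by the $L^\infty$ function $\varphi(1+\bar\alpha)$, and the canonical decomposition of $\kt$), so this gives a topological isomorphism. The closing formula is then automatic: writing $P_1\Phi=\phi_1$, one has $(P^-+\theta P^+)P_1\Phi=P^-\phi_1+\theta P^+\phi_1=\mathcal{M}(\Phi)$, so applying $(P^-+\theta P^+)P_1$ to $\ker(AP^++BP^-)$ coincides with $\mathcal{M}$ and hence surjects onto $\ker\Dta$. The only point that requires a small amount of care is the second-component calculation for $\mathcal{N}\circ\mathcal{M}$: one has to invoke both equations of the system above --- in particular the relation $P^-\phi_2=\bar\alpha P^+\phi_2$ --- rather than just the one that certifies $\mathcal{M}(\Phi)\in\ker\Dta$.
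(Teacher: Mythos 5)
Your argument is correct and follows essentially the same route as the paper: both directions rest on Theorem~\ref{3.1} with zero right-hand side, together with the observation that on $\ker(AP^++BP^-)$ the component $\phi_2$ is determined by $\phi_1$ via $\phi_2=\varphi(1+\bar\alpha)(P^-\phi_1+\theta P^+\phi_1)$. The only cosmetic difference is that you exhibit the explicit two-sided inverse $\mathcal{M}$ and check both compositions, whereas the paper argues injectivity and surjectivity of $\mathcal{N}$ directly; the substance is identical.
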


\begin{proof} Since
$$(1+\bar\alpha)P_\alpha \varphi(f_-+\theta \tilde f_+)= \varphi(1+\bar\alpha)(f_-+\theta\tilde f_+)-(1+\bar\alpha)\Dta(f_-+\theta\tilde f_+),$$
it follows from Theorem \ref{3.1} that $\Dta(f_-+\theta\tilde f_+)=0$ if and only if $$(\apb)\begin{bmatrix}
                                         f_-+\tilde f_+ \\
                                         \varphi(1+\bar\alpha)(f_-+\theta\tilde f_+)
                                       \end{bmatrix} = \begin{bmatrix}
                                                         0 \\
                                                         0
                                                       \end{bmatrix}.$$
Thus $\mathcal{N}$ is well defined and injective. To see that $\mathcal{N}$ is also surjective note that if
$$(\apb)\begin{bmatrix} \phi_1 \\ \phi_2
\end{bmatrix} = \begin{bmatrix}
0 \\
0
\end{bmatrix}, $$
then
\begin{displaymath}
\begin{split}
\phi_2&= \varphi \theta P^+\phi_{1}+\varphi P^-\phi_1+\varphi\theta\bar \alpha P^+\phi_1+\bar\alpha\varphi P^-\phi_1\\
&=\varphi(P^-\phi_1+\theta P^+ \phi_1)+\varphi\bar\alpha(P^-\phi_1+\theta P^+\phi_1)\\
&=\varphi(1+\bar \alpha)(P^-\phi_1+\theta P^+\phi_1),
\end{split}
\end{displaymath}
which means that $\phi_2$ is determined by $\phi_1$. Thus by Theorem \ref{3.1}(2), if $\Phi=\begin{bmatrix} \phi_1 \\ \phi_2
\end{bmatrix}$ is in $\ker( \apb)$,  then $\Phi=\mathcal{N}(P^-\phi_1+\theta P^+\phi_1)$.
\end{proof}

\begin{theorem}\label{6.2}
The map
 $$ \mathcal{N}_*  \colon \ker (\Dta)^* \to \ker(\apb)^* ,$$
  $$\mathcal{N}_*  (g_-+\alpha \tilde g_+)=\begin{bmatrix}
                                         g_- \\
                                         \tilde g_+
                                       \end{bmatrix},\quad\text{where}\ g_-\in H^2_-, \tilde g_+\in H^2,$$
is an isomorphism.
\end{theorem}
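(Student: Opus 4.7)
The plan is to compute the adjoint $(\apb)^*$ explicitly, identify its kernel equations, and match them to the kernel of $(\Dta)^*=\Daa$. I would first observe that since $P^\pm$ are orthogonal projections on $L^2\oplus L^2$, one has $(\apb)^*=P^+A^*+P^-B^*$, where $A^*,B^*$ are the pointwise conjugate transposes of the $2\times 2$ symbols in \eqref{w4.1}:
\begin{equation*}
A^*=\begin{bmatrix}\bar\varphi\bar\theta & \bar\varphi\bar\theta\alpha\\ -1 & 0\end{bmatrix},\qquad B^*=\begin{bmatrix}\bar\varphi & \bar\varphi\alpha\\ 0 & -1\end{bmatrix}.
\end{equation*}

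A direct calculation then gives
\begin{equation*}
(P^+A^*+P^-B^*)\begin{bmatrix}\psi_1\\ \psi_2\end{bmatrix}=\begin{bmatrix}P^+[\bar\theta\bar\varphi(\psi_1+\alpha\psi_2)]+P^-[\bar\varphi(\psi_1+\alpha\psi_2)]\\ -P^+\psi_1-P^-\psi_2\end{bmatrix}.
\end{equation*}
Setting this to zero, the main step is to exploit $H^2\cap H^2_-=\{0\}$ to split each coordinate into two independent conditions. The second coordinate forces $\psi_1\in H^2_-$ and $\psi_2\in H^2$. The first coordinate forces $\bar\varphi(\psi_1+\alpha\psi_2)\in H^2$ together with $\bar\theta\bar\varphi(\psi_1+\alpha\psi_2)\in H^2_-$, which jointly are equivalent to $\bar\varphi(\psi_1+\alpha\psi_2)\in K_\theta$.

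Finally, since $(\Dta)^*=\Daa$ acts on $\ka=H^2_-\oplus\alpha H^2$, its kernel consists of those $g_-+\alpha\tilde g_+$ with $g_-\in H^2_-$, $\tilde g_+\in H^2$ satisfying $Q_\theta\bar\varphi(g_-+\alpha\tilde g_+)=0$, i.e., $\bar\varphi(g_-+\alpha\tilde g_+)\in K_\theta$. Putting $\psi_1=g_-$, $\psi_2=\tilde g_+$, this matches exactly the kernel conditions for $(\apb)^*$ derived above, so $\mathcal{N}_*$ is a bijection. It is in fact an isometry since multiplication by the inner function $\alpha$ is unitary on $L^2$, giving $\|g_-+\alpha\tilde g_+\|^2=\|g_-\|^2+\|\tilde g_+\|^2$; hence $\mathcal{N}_*$ is an isomorphism onto its image $\ker(\apb)^*$. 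The only mildly delicate step is the clean decoupling of the two projection terms in each coordinate, which hinges on the orthogonality $H^2\perp H^2_-$; the rest is a direct symbol computation.
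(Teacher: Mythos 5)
Your proposal is correct and follows essentially the same route as the paper: both compute $(\apb)^*=P^+A^*+P^-B^*$, use the orthogonality of $H^2$ and $H^2_-$ to decouple the kernel equations into $\psi_1\in H^2_-$, $\psi_2\in H^2$ and $\bar\varphi(\psi_1+\alpha\psi_2)\in K_\theta$, and identify this with $\ker\Daa$. The only (harmless) additions on your side are the explicit coordinate-wise computation of the adjoint's action and the remark that $\mathcal{N}_*$ is in fact an isometry.
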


\begin{proof}
We have $(\Dta)^*=\Daa$ and $(\apb)^*=P^+A^*+P^-B^*$, where $A^*=\bar A^T$, $B^*=\bar B^T$ and we identify $A^*$ and $B^*$ with the corresponding multiplication operators on $L^2\oplus L^2$. Now, let $\Phi=\begin{bmatrix}
\phi_1 \\
\phi_2
\end{bmatrix}\in L^2\oplus L^2$. Then $(\apb)^*\Phi=0$ if and only if $P^+A^*\Phi=0$ and $P^-B^*\Phi=0$. The last two conditions are equivalent to
$$A^*\Phi \in (H^2_-\oplus H^2_-)\quad\text{and}\quad B^*\Phi\in (H^2\oplus H^2),$$
that is,
\begin{displaymath}
\begin{bmatrix}
\bar\varphi\bar\theta & \bar\varphi\bar\theta\alpha \\
-1 & 0
\end{bmatrix}
\begin{bmatrix}
\phi_1 \\
\phi_2
\end{bmatrix}
\in (H^2_-\oplus H^2_-),\text{ and }
\begin{bmatrix}
\bar\varphi & \alpha \bar\varphi\\
0&-1
\end{bmatrix}
\begin{bmatrix}
\phi_1 \\
\phi_2
\end{bmatrix}
\in (H^2\oplus H^2).
\end{displaymath}
In other words, $\phi_1\in H^2_-$, $\phi_2\in H^2$ and
\begin{displaymath}
\left\{\begin{array}{c}
	\bar\theta (\bar\varphi\phi_1+\alpha\bar\varphi\phi_2)
	\in H^2_-, \\
	\bar\varphi\phi_1+\alpha\bar\varphi\phi_2
	\in H^2.
\end{array}\right.
\end{displaymath}
The two conditions above can be written as $\bar\varphi(\phi_1+\alpha\phi_2)\in K_{\theta}$. Therefore,  $(\apb)^*\Phi=0$ if and only if $\phi_1\in H^2_-$, $\phi_2\in H^2$ and $\Daa(\phi_1+\alpha\phi_2)=0$. This implies that $\mathcal{N}_*$ is well defined and surjective.
It is now easy to see that $\mathcal{N}_*$ is  injective.
\end{proof}


A {\it conjugation} on a Hilbert space $\h$ is an antilinear isometric involution  (see for instance \cite{GP}).
In what follows let $C_{\theta}\colon L^2\to L^2$ denote the conjugation defined as $$C_\theta(f)=\theta \bar z \bar f\qquad\text{ for}\ \  f\in L^2.$$
The conjugation $C_\theta $ preserves both
 the model space $K_\theta$ and its orthogonal complement $(K_\theta)^\perp$, (i.e., $C_\theta P_\theta=P_\theta C_\theta$), and therefore induces a conjugation in $K_\theta$ and in $(K_\theta)^\perp$, which we also denote by $C_\theta$. This conjugation plays an important role in the study of truncated Toeplitz operators.

\begin{theorem}\label{6.3}
The map
 $$ \mathcal{N}_D \colon \ker D^{\theta}_{\varphi} \to \ker(D^{\theta}_{\varphi})^*,$$
$$  \mathcal{N}_D (f_-+\theta \tilde f_+)=\overline{z\tilde f_+}+\theta \overline{z f_-},\quad\text{where}\ \  f_-\in H_-^2, \tilde f_+\in H^2,$$
is an isomorphism and  \[\ker(\Dt)^*=C_\theta (\ker \Dt).\]
\end{theorem}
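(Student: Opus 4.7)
The plan is to recognize $\mathcal{N}_D$ as the restriction of $C_\theta$ to $\ker \Dt$ and then to transport one kernel to the other via an intertwining relation $C_\theta \Dt = (\Dt)^* C_\theta$.

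First I would verify by a direct computation that $\mathcal{N}_D$ coincides with $C_\theta$ on $(K_\theta)^\perp$. For $f = f_- + \theta \tilde f_+$ with $f_- \in H^2_-$ and $\tilde f_+ \in H^2$,
\[ C_\theta f = \theta \bar z\,\overline{f_- + \theta \tilde f_+} = \theta \bar z\, \bar f_- + \bar z\, \overline{\tilde f_+} = \theta\,\overline{z f_-} + \overline{z\tilde f_+},\]
which is exactly $\mathcal{N}_D(f)$. In passing, this computation also shows that $C_\theta$ swaps the two summands of $H^2_- \oplus \theta H^2$, so in particular $C_\theta$ maps $(K_\theta)^\perp$ into itself; since $C_\theta$ is an antilinear isometric involution on $L^2$ and preserves $K_\theta$, it commutes with $P_\theta$ and $Q_\theta$.

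Next I would establish the intertwining relation, using: (i) $C_\theta Q_\theta = Q_\theta C_\theta$, just recalled; (ii) $C_\theta M_\varphi = M_{\bar\varphi} C_\theta$, because $C_\theta(\varphi g) = \theta \bar z \bar\varphi \bar g = \bar\varphi\, C_\theta g$; and (iii) the identity $(\Dt)^* = D^\theta_{\bar\varphi}$ noted right after Proposition~\ref{1.1}. Chaining them gives
\[ C_\theta \Dt = C_\theta Q_\theta M_\varphi Q_\theta = Q_\theta M_{\bar\varphi} Q_\theta C_\theta = D^\theta_{\bar\varphi} C_\theta = (\Dt)^* C_\theta.\]

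Finally, this intertwining carries $\ker \Dt$ into $\ker (\Dt)^*$; applying $C_\theta$ on both sides and using $C_\theta^2 = I$ yields the reverse inclusion, so $C_\theta|_{\ker \Dt}$, which equals $\mathcal{N}_D$, is an antilinear isometric involutive bijection onto $\ker (\Dt)^*$. This proves both that $\mathcal{N}_D$ is an isomorphism and that $\ker (\Dt)^* = C_\theta(\ker \Dt)$. The main point requiring care is really just the invariance of $(K_\theta)^\perp$ under $C_\theta$ (and hence the commutation of $C_\theta$ with $Q_\theta$); once that is verified, everything else is a short sequence of commutations.
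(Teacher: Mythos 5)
Your proposal is correct and is essentially the paper's own argument in operator-theoretic packaging: the paper verifies directly that $f\in\ker\Dt$ iff $\varphi f\in K_\theta$ iff $C_\theta(\varphi f)=\bar\varphi\,C_\theta f\in K_\theta$, which is exactly your intertwining relation $C_\theta \Dt=(\Dt)^*C_\theta$ evaluated on kernel elements. Both proofs rest on the same two facts --- $C_\theta$ commutes with $Q_\theta$ and satisfies $C_\theta M_\varphi=M_{\bar\varphi}C_\theta$ --- so there is nothing further to add.
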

\begin{proof}
	Let $f_-\in H_-^2$ and $\tilde f_+\in H^2$. Then $D^{\theta}_{\varphi}(f_-+\theta\tilde f_+)=0$ if and only if $\varphi(f_-+\theta\tilde f_+)\in K_{\theta}$, that is, if and only if
$$\theta \bar z \overline{\varphi(f_-+\theta\tilde f_+)}=\bar\varphi\,(\overline{z\tilde f_+}+\theta \,\overline{z f_-})\in K_\theta.$$
   The above means that
   $$D_{\bar\varphi}^{\theta}(g_-+\theta\tilde g_+)=0$$
   with $g_-=\overline{z\tilde f_+}\in H^2_-$, $\tilde g_+=\overline{z f_-}\in H^2 $ (i.e., $g_-+\theta\tilde g_+=\mathcal{N}_D(f_-+\theta \tilde f_+)$). Thus $\mathcal{N}_D$ is an isomorphism and we have $$\ker(\Dt)^*=\theta \bar z \overline{\ker\Dt}=C_\theta (\ker \Dt).$$
\end{proof}


\begin{corollary}\label{6.4}
If $D^{\theta}_{\varphi}$ is Fredholm, then it has Fredholm index $0$.
\end{corollary}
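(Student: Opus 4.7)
The plan is to deduce Corollary \ref{6.4} directly from Theorem \ref{6.3}. Recall that for a Fredholm operator $T$ on a Hilbert space, the Fredholm index is
\[
\operatorname{ind} T = \dim \ker T - \codim \Ima T = \dim \ker T - \dim \ker T^*,
\]
so the statement reduces to showing that $\dim \ker D^\theta_\varphi = \dim \ker (D^\theta_\varphi)^*$ whenever both sides are finite.

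First, I would observe that Fredholmness of $D^\theta_\varphi$ guarantees that $\dim \ker D^\theta_\varphi < \infty$ and $\dim \ker (D^\theta_\varphi)^* < \infty$. Next, I would invoke Theorem \ref{6.3}, which exhibits an explicit isomorphism
\[
\mathcal{N}_D\colon \ker D^\theta_\varphi \to \ker (D^\theta_\varphi)^*,
\]
equivalently realized through the conjugation $C_\theta$ via the identity $\ker (D^\theta_\varphi)^* = C_\theta(\ker D^\theta_\varphi)$. (Since $C_\theta$ is an antilinear isometric involution, it maps any subspace of finite dimension $n$ to a subspace of the same dimension $n$.) Consequently the two kernel dimensions coincide.

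Combining these two observations,
\[
\operatorname{ind} D^\theta_\varphi = \dim \ker D^\theta_\varphi - \dim \ker (D^\theta_\varphi)^* = 0,
\]
which is the desired conclusion. There is no real obstacle here: the work has already been done in Theorem \ref{6.3}, and the only thing to verify is that the conjugation (or, equivalently, the antilinear isomorphism $\mathcal{N}_D$) preserves dimension — which it does, because antilinear bijections between finite-dimensional complex vector spaces preserve complex dimension. Thus the corollary is an essentially immediate consequence of the previously established isomorphism between $\ker D^\theta_\varphi$ and $\ker (D^\theta_\varphi)^*$.
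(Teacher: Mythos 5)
Your proof is correct and follows the same route the paper intends: Corollary \ref{6.4} is stated as an immediate consequence of Theorem \ref{6.3}, whose antilinear isomorphism $\mathcal{N}_D$ (equivalently, the conjugation $C_\theta$) gives $\dim\ker D^\theta_\varphi = \dim\ker (D^\theta_\varphi)^*$, and hence index zero. Your added remark that an antilinear bijection preserves complex dimension is the only point needing verification, and you handle it correctly.
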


\begin{corollary}\label{6.5}
If $D^{\theta}_{\varphi}$ is Fredholm, then it is invertible if and only if $\ker D^{\theta}_{\varphi} =\{0\}$.
\end{corollary}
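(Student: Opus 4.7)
The claim is a short consequence of Corollaries 6.4 and the kernel description in Theorem 6.3. The plan is as follows.

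The ``only if'' direction is immediate: an invertible operator has trivial kernel, so nothing to prove there. For the ``if'' direction, assume $D^{\theta}_{\varphi}$ is Fredholm with $\ker D^{\theta}_{\varphi} = \{0\}$. Since $D^\theta_\varphi$ is Fredholm, its range is closed and its cokernel is finite dimensional, with $\dim \operatorname{coker} D^\theta_\varphi = \dim \ker (D^\theta_\varphi)^*$.

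Next I invoke Theorem \ref{6.3}: the map $\mathcal{N}_D$ (equivalently the conjugation $C_\theta$) is an isomorphism from $\ker D^\theta_\varphi$ onto $\ker (D^\theta_\varphi)^*$. Since $\ker D^\theta_\varphi = \{0\}$ by hypothesis, it follows that $\ker (D^\theta_\varphi)^* = \{0\}$, hence $\operatorname{coker} D^\theta_\varphi = \{0\}$, i.e., $D^\theta_\varphi$ has dense range. Combined with closed range (from Fredholmness), we get that $D^\theta_\varphi$ is surjective. Together with injectivity, $D^\theta_\varphi$ is bijective and therefore boundedly invertible by the open mapping theorem.

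Alternatively, one can invoke Corollary \ref{6.4} directly: Fredholm index zero means $\dim \ker D^\theta_\varphi = \dim \ker (D^\theta_\varphi)^*$, so $\ker D^\theta_\varphi = \{0\}$ forces $\ker (D^\theta_\varphi)^* = \{0\}$, and one concludes as above. There is essentially no obstacle here; the corollary is a routine bookkeeping statement that simply records the consequence of having index $0$ together with the conjugation-based kernel isomorphism established in Theorem \ref{6.3}.
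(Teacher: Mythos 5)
Your proof is correct and follows exactly the route the paper intends: the paper states this corollary without proof as an immediate consequence of Theorem \ref{6.3} (the conjugation $C_\theta$ identifies $\ker(\Dt)^*$ with $\ker\Dt$) and Corollary \ref{6.4} (index zero), which is precisely the bookkeeping you carry out.
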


\begin{theorem}\label{6.6}
If $\varphi \in \mathcal{G}L^{\infty}$, then the map
 $$ \mathcal{N}_{DA} \colon \ker D^{\theta,\alpha}_{\varphi} \to \ker A^{\alpha,\theta}_{\varphi^ {-1}},$$
$$\mathcal{N}_{DA} (f_-+\theta \tilde f_+)=\varphi(f_-+\theta \tilde f_+),\quad \text{where}\ \  f_-\in H_-^2, \tilde f_+\in H^2,$$
is an isomorphism and  \[\ker  \Dta = \varphi^{-1}\ker A^{\alpha,\theta}_{\varphi^{-1}}.\]
\end{theorem}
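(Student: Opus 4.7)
The plan is to give a direct kernel-level argument rather than unpacking the abstract equivalence after extension of Theorem \ref{5.1}. Since $\varphi \in \mathcal{G}L^{\infty}$, both $\varphi$ and $\varphi^{-1}$ lie in $L^{\infty}$, so multiplication by $\varphi$ is a bounded invertible operator on $L^2$, with inverse given by multiplication by $\varphi^{-1}$.

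First I would unfold the two kernel conditions. For $f \in \kt$ we have $Q_{\theta} f = f$, hence $\Dta f = Q_{\alpha}(\varphi f)$, so
\[
f \in \ker \Dta \iff \varphi f \in K_{\alpha}.
\]
Symmetrically, for $g \in K_{\alpha}$ we have $P_{\alpha} g = g$, hence $A^{\alpha,\theta}_{\varphi^{-1}} g = P_{\theta}(\varphi^{-1} g)$, so
\[
g \in \ker A^{\alpha,\theta}_{\varphi^{-1}} \iff \varphi^{-1} g \in \kt.
\]

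The key step is then to set $g = \varphi f$ and observe that the two resulting conditions are in exact bijective correspondence: $f \in \kt$ with $\varphi f \in K_{\alpha}$ holds if and only if $g \in K_{\alpha}$ with $\varphi^{-1} g \in \kt$. Since multiplication by $\varphi$ and by $\varphi^{-1}$ are mutually inverse bounded linear operators on $L^2$, this correspondence exhibits $\mathcal{N}_{DA}$ as a Banach-space isomorphism with inverse $g \mapsto \varphi^{-1} g$, and immediately yields the identity $\ker \Dta = \varphi^{-1} \ker A^{\alpha,\theta}_{\varphi^{-1}}$.

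There is no real obstacle here: the theorem simply says that the conditions defining membership in the two kernels are swapped by the invertible multiplication by $\varphi$. The only point to verify carefully is the simplification of the projection formulas $\Dta f = Q_{\alpha}(\varphi f)$ on $\kt$ and $A^{\alpha,\theta}_{\varphi^{-1}} g = P_{\theta}(\varphi^{-1} g)$ on $K_{\alpha}$, both of which follow at once from the definitions of the respective operators.
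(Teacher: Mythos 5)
Your proposal is correct and is essentially identical to the paper's own proof: both unfold $f\in\ker \Dta$ as ``$f\in\kt$ and $\varphi f\in K_\alpha$'' and $g\in\ker A^{\alpha,\theta}_{\varphi^{-1}}$ as ``$g\in K_\alpha$ and $\varphi^{-1}g\in\kt$'', then observe that multiplication by $\varphi$ swaps the two conditions. No further comment is needed.
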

\begin{proof}
Let $f\in L^2$. Then $f\in \ker \Dta$ if and only if $f \in \kt$ and $\varphi f \in K_\alpha$. In other words, $g=\varphi f\in K_{\alpha}$ and $\varphi^{-1}g=f\in  \kt$, that is, $g\in \ker A^{\alpha,\theta}_{\varphi^ {-1}}$.
\end{proof}


\section{Dual truncated Toeplitz operators and the corona theorem}
 Corona problems, seen as left invertibility problems, have a strong connection with the invertibility and Fredholmness of block Toeplitz operators (see for instance \cite {CDR} and references in it). In this section we extend some of those connections to paired operators and apply them to the study of injectivity and invertibility of dual truncated Toeplitz operators.

Let $CP^{\pm}$ denote the sets of corona pairs, i.e., pairs of functions satisfying the so called corona conditions in $\mathbb{D}$ and $\C\setminus(\mathbb{D}\cup \mathbb{T})$, denoted here by  $\mathbb{D}^\pm$, respectively:
\begin{equation*}
CP^{+}=\left\{\mathfrak{h}_+=\begin{bmatrix}
h_{1+}\\
h_{2+}
\end{bmatrix}\in (H^{\infty}\oplus H^{\infty}): \inf\limits_{z\in \mathbb{D}^+}(|h_{1+}(z)|+|h_{2+}(z)|)>0\right\}.
\end{equation*}
\begin{equation*}
CP^{-}=\left\{\mathfrak{h}_-=\begin{bmatrix}
h_{1-}\\
h_{2-}
\end{bmatrix}\in (\overline {H^{\infty}}\oplus \overline {H^{\infty}}): \inf\limits_{z\in \mathbb{D}^{-}}(|h_{1-}(z)|+|h_{2-}(z)|)>0\right\}.
\end{equation*}
Obviously,  $\mathfrak{h}_-\in CP^-$ if and only if $\overline {\mathfrak{h}_-}\in CP^+$.


By the corona theorem $\mathfrak{h}_{\pm}\in CP^{\pm}$ if and only if there exist $\tilde{\mathfrak{h}}_{\pm}$ with $\tilde{\mathfrak{h}}_+ \in (H^{\infty}\oplus H^{\infty})$ and $\tilde{\mathfrak{h}}_- \in \overline {H^{\infty}}\oplus \overline {H^{\infty}}$ such that
\begin{equation}\label{w8.2}
  (\tilde{\mathfrak{h}}_{\pm})^T \mathfrak{h}_{\pm}=1 \quad \text{in} \quad \mathbb{D}^{\pm}.
\end{equation}

\begin{theorem}\label{8.1}
Let $A,B\in (L^{\infty})^{2\times 2}$ be such that $\det A=\varphi f_+$ and $\det B=\varphi f_-$, where $\varphi \in \mathcal{G}L^{\infty}$ and $f_+,\overline{f_-}\in H^{\infty}\setminus \{0\}$. If there exist $\mathfrak{h}_{\pm}\in CP^{\pm}$ satisfying
\begin{equation}\label{w8.3}
  A\mathfrak{h}_++B\mathfrak{h}_-=0
\end{equation}
with $A\mathfrak{h}_+(t)\neq 0$ a. e. on $\mathbb{T}$, then the operator $\apb$ is injective in $L^2\oplus L^2$.
\end{theorem}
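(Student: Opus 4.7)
The plan is to prove injectivity directly: suppose $\Phi \in L^2 \oplus L^2$ satisfies $(AP^+ + BP^-)\Phi = 0$, and write $\Phi_\pm = P^\pm \Phi$, so that the relation becomes $A\Phi_+ + B\Phi_- = 0$. The task reduces to showing $\Phi_\pm = 0$, and the key idea is to pair the unknown columns $\Phi_\pm$ with the corona data $\mathfrak{h}_\pm$ and exploit a determinantal identity together with the Bezout identity \eqref{w8.2} from the corona theorem.

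First I would form the $2 \times 2$ matrices $M_\pm = [\Phi_\pm \mid \mathfrak{h}_\pm]$. Since $A\Phi_+ = -B\Phi_-$ and $A\mathfrak{h}_+ = -B\mathfrak{h}_-$ by \eqref{w8.3}, we obtain $AM_+ = -BM_-$; taking determinants and using $\det A = \varphi f_+$, $\det B = \varphi f_-$ with $\varphi \in \mathcal{G}L^\infty$ yields
\begin{equation*}
f_+ \det M_+ = f_- \det M_-.
\end{equation*}
The left-hand side sits in $H^1$ since $\det M_+ = \phi_{1+}h_{2+} - \phi_{2+}h_{1+} \in H^2 \cdot H^\infty \subset H^1$ and $f_+ \in H^\infty$. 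Using $H^2_- = \overline{zH^2}$, a parallel computation shows $\det M_- \in H^2_-$, and multiplying by $f_- \in \overline{H^\infty}$ keeps the result in $H^1_-$. Since $H^1 \cap H^1_- = \{0\}$, and since $f_+$ and $\overline{f_-}$ are nonzero elements of $H^\infty$ hence nonzero a.e.\ on $\mathbb{T}$, I conclude $\det M_+ = \det M_- = 0$ a.e.

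Next I would appeal to the corona theorem to produce Bezout inverses $\tilde{\mathfrak{h}}_\pm$ as in \eqref{w8.2}, and set
\begin{equation*}
\psi := \tilde h_{1+}\phi_{1+} + \tilde h_{2+}\phi_{2+} \in H^2, \qquad \mu := \tilde h_{1-}\phi_{1-} + \tilde h_{2-}\phi_{2-} \in H^2_-.
\end{equation*}
A short calculation using $\det M_\pm = 0$ (which gives $h_{1\pm}\phi_{2\pm} = h_{2\pm}\phi_{1\pm}$) shows $\phi_{i+} = \psi h_{i+}$ and $\phi_{i-} = \mu h_{i-}$ for $i = 1,2$, so that $\Phi_+ = \psi \mathfrak{h}_+$ and $\Phi_- = \mu \mathfrak{h}_-$. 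Substituting into $A\Phi_+ + B\Phi_- = 0$ and using \eqref{w8.3} gives $(\psi - \mu) A\mathfrak{h}_+ = 0$. The hypothesis $A\mathfrak{h}_+ \neq 0$ a.e.\ on $\mathbb T$ then forces $\psi = \mu$ a.e., but $\psi \in H^2$ and $\mu \in H^2_-$ together with $H^2 \cap H^2_- = \{0\}$ yields $\psi = \mu = 0$, whence $\Phi = 0$.

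The main delicate point I expect is the bookkeeping of which analytic or anti-analytic Hardy class contains each product: one needs $\mu$ to land in $H^2_-$ rather than a larger space (so the intersection argument truly closes), and one needs $f_-\det M_-$ to genuinely sit in $H^1_-$ even though it is built from $\overline{H^\infty}$ and $H^2_-$ factors. Both facts rely on the identity $H^2_- = \overline{zH^2}$ being stable under multiplication by $\overline{H^\infty}$, which is the one nontrivial verification behind an otherwise algebraic argument.
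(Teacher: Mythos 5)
Your proposal is correct and follows essentially the same route as the paper: form the matrices with columns $\Phi_\pm$ and $\mathfrak{h}_\pm$, take determinants to get an $H^1$--$H^1_-$ (the paper uses $H^2$--$H^2_-$) Liouville-type cancellation forcing $\det M_\pm=0$, use the Bezout relation \eqref{w8.2} to write $\Phi_\pm$ as scalar multiples of $\mathfrak{h}_\pm$, and conclude $\psi=\mu\in H^2\cap H^2_-=\{0\}$ from $A\mathfrak{h}_+\neq 0$ a.e. Your direct verification that $\phi_{i\pm}=\psi h_{i\pm}$ (resp.\ $\mu h_{i-}$) via the Bezout identity is a slightly cleaner rendering of the paper's linear-dependence step, but it is the same argument.
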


\begin{proof}
Let $\Phi=\begin{bmatrix}
\phi_1\\
\phi_2
\end{bmatrix}\in \ker(\apb)\subset L^2\oplus L^2$. Taking $\Phi_{\pm}=P^{\pm}\Phi$ 
 we have
\begin{equation}\label{w8.4}
  A\Phi_+=-B\Phi_-.
\end{equation}
So, using \eqref{w8.3}, we can write
\begin{equation*}
  A\begin{bmatrix} \mathfrak{h}_+ & \Phi_+\end{bmatrix}=-B\begin{bmatrix}\mathfrak{h}_- & \Phi_-\end{bmatrix}.
\end{equation*}
Taking determinants on both sides, we get
 $$ \varphi f_+\det\begin{bmatrix} \mathfrak{h}_+ & \Phi_+\end{bmatrix}  = -\varphi f_- \det\begin{bmatrix}\mathfrak{h}_- & \Phi_-\end{bmatrix},$$
 which, since $\varphi \in \mathcal{G}L^{\infty}$, is equivalent to
$$f_+\det\begin{bmatrix} \mathfrak{h}_+ & \Phi_+\end{bmatrix}  = -f_- \det\begin{bmatrix}\mathfrak{h}_- & \Phi_-\end{bmatrix}.$$
Moreover, since the left hand side of the above equality represents a function in $H^2$ while the right hand side represents  a function in $H^2_-$, both sides must be zero. It follows that
\begin{equation*}
    \det\begin{bmatrix}\mathfrak{h}_{\pm} & \Phi_{\pm}\end{bmatrix}=0,
\end{equation*}
so there exist  non-zero linear combinations, with $\delta_{1\pm}$ and $\delta_{2\pm}$ defined a. e. on $\mathbb{T}$,
\begin{equation*}
    \delta_{1\pm}\mathfrak{h}_{\pm} + \delta_{2\pm} \Phi_{\pm}=0.
\end{equation*}
Multiplying on the left by $(\tilde{\mathfrak{h}}_{\pm})^T$, by \eqref{w8.2} we have
\begin{equation*}
  \delta_{1\pm}=-\delta_{2\pm}(\tilde{\mathfrak{h}}_{\pm})^T \Phi_{\pm}.
\end{equation*}
Therefore there exist $\delta_{\pm}\neq 0$ a. e. on $\mathbb{T}$ such that
\begin{equation}\label{eq84}
\Phi_{\pm}=\delta_\pm \mathfrak{h}_\pm \quad\text{on } \mathbb{T}.
\end{equation}
From \eqref{w8.3} and \eqref{eq84},
 $ A\Phi_+=-B\Phi_-$ if and only if $\delta_+A\mathfrak{h}_+=-\delta_-B\mathfrak{h}_-$, or $(\delta_+-\delta_-)A\mathfrak{h}_+=0$. This however happens if and only if $\delta_+=\delta_-$ (as $A\mathfrak{h}_+(t)\neq 0$ a. e. on $\mathbb{T}$).
Since $\delta_{+}=(\tilde{\mathfrak{h}}_{+})^T\Phi_{+}\in H^2$, $\delta_{-}=(\tilde{\mathfrak{h}}_{-})^T\Phi_{-}\in H^2_{-}$ and $\delta_+=\delta_-$, we have $\delta_{\pm}=0$ and therefore $\Phi_{\pm}=0$.
\end{proof}

\begin{theorem}\label{8.2}
Let $A,B\in (L^{\infty})^{2\times 2}$ be invertible matrices such that 
$\det (B^{-1}A)=f_- z^k f_+$ with $f_+, \overline{f_-}\in \mathcal{G}H^\infty
$, $k\in \mathbb{Z}$. 
If there exist $\mathfrak{h}_{\pm}\in CP^{\pm}$ satisfying \begin{equation}\label{w8.3a}
 A\mathfrak{h}_++B\mathfrak{h}_-=0,
\end{equation} then $\apb$ is invertible, injective (and not surjective), or surjective (and not injective) if and only if $k=0$, $k>0$ or $k<0$, respectively.
\end{theorem}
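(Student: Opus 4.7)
My plan is to perform a Wiener--Hopf-type factorization of the symbol $C:=B^{-1}A$ out of the corona data, and then read the invertibility, injectivity, and surjectivity of $\apb$ off the scalar Toeplitz operator $T_{z^k}$.

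By the corona theorem \eqref{w8.2}, pick $\tilde{\mathfrak{h}}_+ \in (H^\infty)^2$ and $\tilde{\mathfrak{h}}_- \in (\overline{H^\infty})^2$ with $\tilde{\mathfrak{h}}_\pm^T \mathfrak{h}_\pm = 1$, and complete $\mathfrak{h}_\pm$ to
$$M_+ = \begin{bmatrix} h_{1+} & -\tilde h_{2+} \\ h_{2+} & \phantom{-}\tilde h_{1+} \end{bmatrix} \in \mathcal{G}(H^\infty)^{2\times 2},\qquad M_- = \begin{bmatrix} h_{1-} & -\tilde h_{2-} \\ h_{2-} & \phantom{-}\tilde h_{1-} \end{bmatrix} \in \mathcal{G}(\overline{H^\infty})^{2\times 2},$$
each with determinant $1$ and first column $\mathfrak{h}_\pm$. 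The hypothesis \eqref{w8.3a} rewrites as $C\mathfrak{h}_+ = -\mathfrak{h}_-$, so the first column of $M_-^{-1}CM_+$ equals $\begin{bmatrix} -1 \\ \phantom{-}0 \end{bmatrix}$ and
$$M_-^{-1}CM_+ = R := \begin{bmatrix} -1 & r_{12} \\ \phantom{-}0 & r_{22} \end{bmatrix}, \qquad r_{12},r_{22}\in L^\infty.$$
Taking determinants yields $r_{22} = -\det C = -f_-z^kf_+$.

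Next I would chain together the available equivalences. Since $B$ is invertible in $(L^\infty)^{2\times 2}$, multiplication by $B$ is an invertible operator on $L^2\oplus L^2$, giving $\apb = B(CP^++P^-) \sim CP^++P^-$. By the observation just before \eqref{e22}, $CP^++P^- \overset{\star}\sim T_C := P^+CP^+|_{H^2\oplus H^2}$. The matrix Toeplitz product rule, applicable because $M_+^{-1}\in(H^\infty)^{2\times 2}$ and $M_-\in(\overline{H^\infty})^{2\times 2}$, gives $T_C = T_{M_-}T_R T_{M_+^{-1}}$ with $T_{M_-}$ and $T_{M_+^{-1}}$ invertible (inverses $T_{M_-^{-1}}$ and $T_{M_+}$). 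Hence $T_C \sim T_R$, and altogether $\apb \overset{\star}\sim T_R$.

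Finally, $T_R$ is upper triangular on $H^2\oplus H^2$ with invertible $-I$ in the top-left corner, so an elementary calculation shows $\dim\ker T_R = \dim\ker T_{r_{22}}$ and $\codim\Ima T_R = \codim\Ima T_{r_{22}}$. Since $f_+,\overline{f_-}\in\mathcal{G}H^\infty$, the operators $T_{f_\pm}$ are invertible on $H^2$, and the standard Toeplitz product formulas give $T_{r_{22}} = -T_{f_-}T_{z^k}T_{f_+}$, which is equivalent to $T_{z^k}$. Because $T_{z^k}$ is invertible when $k=0$, injective with cokernel of dimension $k$ when $k>0$, and surjective with kernel of dimension $-k$ when $k<0$, pulling this trichotomy back through the equivalences yields the announced statement for $\apb$. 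The main delicate step will be the matrix Toeplitz product identity $T_{M_-RM_+^{-1}} = T_{M_-}T_R T_{M_+^{-1}}$, which relies on the analyticity of $M_+^{-1}$ and the anti-analyticity of $M_-$ so that the Riesz projection $P^+$ distributes correctly through the multiplications; once that and the invertibility of $T_{M_\pm^{\pm 1}}$ are in place, the rest is bookkeeping through the equivalences already established in earlier sections.
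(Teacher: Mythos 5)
Your argument is correct, and its overall architecture coincides with the paper's: both reduce $\apb$ to the block Toeplitz operator $T_{B^{-1}A}$ through $\apb = B(B^{-1}AP^++P^-) \sim B^{-1}AP^++P^- \overset{\star}\sim T_{B^{-1}A}$, and then extract the trichotomy in $k$ from the corona hypothesis. The only difference lies in how that last step is handled: the paper simply invokes Theorem 4.5 of \cite{CDR} (and the proof given there) for the statement that $T_{B^{-1}A}$ is invertible, only injective, or only surjective according as $k=0$, $k>0$, $k<0$, whereas you reprove it by completing $\mathfrak{h}_\pm$ to determinant-one matrices $M_\pm$ with $M_+^{\pm 1}$ analytic and $M_-^{\pm 1}$ anti-analytic, factoring $B^{-1}A=M_-RM_+^{-1}$ with $R$ triangular and $r_{22}=-f_-z^kf_+$, and reducing to $T_{z^k}$ via the standard Toeplitz product rules. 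This is essentially the same mechanism as in \cite{CDR}, so what you gain is self-containedness rather than a new idea; all the individual steps (the determinant computation $r_{22}=-\det(B^{-1}A)$, the validity of $T_{GH}=T_GT_H$ for an anti-analytic left or analytic right factor, the invertibility of $T_{M_\pm^{\pm 1}}$ and of $T_{f_+}$, $T_{f_-}$, the reduction of the triangular $T_R$ to $T_{r_{22}}$, and the transfer of kernel dimension, range closedness and cokernel dimension through $\overset{\star}\sim$ by Theorem \ref{2.2}) check out. Two small points worth making explicit in a final write-up: the ``only if'' directions follow automatically because the three cases $k=0$, $k>0$, $k<0$ are mutually exclusive and exhaustive; and surjectivity does transfer through equivalence after extension here because $\Ima T_{z^k}$ is closed, so Theorem \ref{2.2}(2) applies.
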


\begin{proof}
If $\mathfrak{h}_{\pm}$ satisfy \eqref{w8.3a}, then $B^{-1}A\mathfrak{h}_+=-\mathfrak{h}_-$, where $\det(B^{-1}A)=f_- z^k  f_+$ with $f_+, \overline{f_-}\in \mathcal{G}H^\infty
$, $k\in \mathbb{Z}$ and $\mathfrak{h}_{\pm}\in CP^{\pm}$. Therefore, by \cite[Theorem 4.5]{CDR} and the proof given there, we have that the Toeplitz operator $T_{B^{-1}A}\colon (H^2\oplus H^2) \to (H^2\oplus H^2)$ is invertible, only injective, or only surjective if and only if $k=0$, $k>0$ or $k<0$, respectively. 
We conclude that the same holds for $\apb$ because (by \eqref{e22}),
\begin{equation*}
  T_{B^{-1}A}\overset{\star}\sim  B^{-1}AP^++P^- \sim \apb.
\end{equation*}
\end{proof}

Taking Theorem \ref{4.2} into account, we also have the following.

\begin{theorem}\label{8.3}
Let $\theta,\alpha$ be inner functions such that $\alpha \leqslant \theta$ (i.e., $\alpha$ divides $\theta$).
\begin{enumerate}
  \item If $\varphi \in \mathcal{G}L^{\infty}$, and there exist $h_{1+},h_{2+}, \overline{h_{2-}}\in H^\infty$ satisfying
      \begin{equation}\label{w8.5}
         \begin{bmatrix}
         h_{1+}\\
         h_{2+}
         \end{bmatrix} \in CP^+, \quad  \begin{bmatrix}
         \bar\alpha h_{1+}\\
         h_{2-}
         \end{bmatrix}\in CP^-
         \end{equation}
         and
         \begin{equation}\label{w8.6}
          \varphi (h_{2-}+\theta h_{2+})=h_{1+},
      \end{equation}
      then $\ker\Dta=\{0\}$.
  \item If, moreover, $\theta\bar\alpha$ is a finite Blaschke product of degree $k$, then $\Dta$ is injective; it is invertible if and only if $\alpha=c\theta$ with $c\in \mathbb{C}, |c|=1$; so in particular $\Dt$ is invertible.
\end{enumerate}
\end{theorem}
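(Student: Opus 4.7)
The plan is to combine the equivalence after extension $\Dta \overset{\star}\sim \apb$ established in Theorem \ref{4.2} with the corona-based criteria of Theorems \ref{8.1} and \ref{8.2}. By Theorem \ref{2.2}, $\ker\Dta\simeq\ker(\apb)$ and both operators are simultaneously invertible (respectively injective, respectively surjective), so it suffices to analyse the paired operator $\apb$.

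The central step is to construct a corona pair $(\mathfrak{h}_+,\mathfrak{h}_-)\in CP^+\times CP^-$ satisfying $A\mathfrak{h}_++B\mathfrak{h}_-=0$. Guided by the hypothesis $\varphi(h_{2-}+\theta h_{2+})=h_{1+}$, I would set
\[
\mathfrak{h}_+=\begin{bmatrix} h_{2+}\\ h_{1+}\end{bmatrix},\qquad \mathfrak{h}_-=\begin{bmatrix} h_{2-}\\ \bar\alpha h_{1+}\end{bmatrix}.
\]
A direct computation using \eqref{w4.1} shows that the first row of $A\mathfrak{h}_++B\mathfrak{h}_-=0$ is precisely the hypothesis $\varphi(h_{2-}+\theta h_{2+})=h_{1+}$, while the second row is the first multiplied by $\bar\alpha$; the memberships $\mathfrak{h}_+\in CP^+$ and $\mathfrak{h}_-\in CP^-$ follow from \eqref{w8.5}, since the corona condition is symmetric in the two components.

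For part~(1), I would then verify the determinant hypothesis of Theorem \ref{8.1}. Since $\alpha\leqslant\theta$, the function $\theta\bar\alpha$ is inner, so $\det A=\varphi\theta\bar\alpha=\varphi f_+$ with $f_+=\theta\bar\alpha\in H^\infty$, and $\det B=-\varphi=\varphi f_-$ with $\overline{f_-}=-1\in H^\infty$. To check that $A\mathfrak{h}_+(t)\neq 0$ a.e.\ on $\mathbb{T}$, observe that $|\varphi\theta\bar\alpha|=|\varphi|>0$ a.e., so the second component $\varphi\theta\bar\alpha\, h_{2+}$ vanishes only where $h_{2+}$ does, and at such points the first component equals $-h_{1+}$, which is nonzero a.e.\ by the corona condition on $\mathfrak{h}_+$. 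Theorem \ref{8.1} then yields $\ker(\apb)=\{0\}$, hence $\ker\Dta=\{0\}$.

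For part~(2), I would apply Theorem \ref{8.2} to the same pair. The Wiener--Hopf factorization $\det(B^{-1}A)=-\theta\bar\alpha=f_-z^kf_+$ with $f_+,\overline{f_-}\in\mathcal{G}H^\infty$ is obtained by writing the finite Blaschke product as $\theta\bar\alpha=c\prod_{i=1}^{k}\frac{z-a_i}{1-\bar a_i z}$ and splitting each factor as $z\cdot(1-a_i/z)\cdot(1-\bar a_i z)^{-1}$, which produces $z^k$ together with an invertible factor in $\overline{H^\infty}$ and one in $H^\infty$. Theorem \ref{8.2} then states that $\apb$ (and hence $\Dta$) is invertible, injective-only, or surjective-only according as $k=0$, $k>0$, or $k<0$. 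Since $\alpha\leqslant\theta$ forces $k\geqslant 0$, injectivity always holds, while invertibility is equivalent to $k=0$, i.e., $\theta\bar\alpha$ being a unimodular constant, i.e., $\alpha=c\theta$ with $|c|=1$; the special case $\alpha=\theta$ trivially yields $k=0$, so $\Dt$ is invertible. The only delicate points of the argument are the a.e.\ nonvanishing of $A\mathfrak{h}_+$ and the explicit Wiener--Hopf factorization just described.
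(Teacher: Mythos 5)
Your proposal is correct and follows essentially the same route as the paper: you form the pair $\mathfrak{h}_+=\begin{bmatrix}h_{2+}\\ h_{1+}\end{bmatrix}$, $\mathfrak{h}_-=\begin{bmatrix}h_{2-}\\ \bar\alpha h_{1+}\end{bmatrix}$, note that $A\mathfrak{h}_++B\mathfrak{h}_-=0$ encodes \eqref{w8.6}, and then invoke Theorem \ref{8.1} (with $\det A=\varphi\theta\bar\alpha$, $\det B=-\varphi$) for part (1) and Theorem \ref{8.2} (with $\det(B^{-1}A)=-\theta\bar\alpha=f_-z^kf_+$) together with Theorem \ref{4.2} for part (2), exactly as the authors do. Your explicit check that $A\mathfrak{h}_+\neq 0$ a.e.\ on $\mathbb{T}$ is a welcome addition, since the paper applies Theorem \ref{8.1} without verifying that hypothesis.
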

\begin{proof}
To prove (1) note that for $A$, $B$ given by \eqref{4.1}, we have $$\det A=\varphi\theta\bar\alpha,\quad \det B=-\varphi,$$ where $\theta\bar\alpha\in H^{\infty}$. For $h_{1-}=\bar\alpha h_{1+}\in \overline{H^{\infty}}$, we have from \eqref{w8.6} 
that
\begin{equation*}
\left\{  \begin{array}{c}
     \varphi\theta h_{2+}+\varphi h_{2-}-h_{1+}=0,\\
    \bar\alpha\varphi\theta h_{2+}+\bar\alpha\varphi h_{2-}-h_{1-}=0,
  \end{array}\right.
\end{equation*}
which can be written as
  \begin{equation*}
  A\begin{bmatrix}
                         h_{2+} \\
                         h_{1+}
                       \end{bmatrix}+
                       B\begin{bmatrix}
                         h_{2-} \\
                         h_{1-}
                       \end{bmatrix}=0.
\end{equation*}
By \eqref{w8.5},  $\begin{bmatrix}
	h_{2+}\\
	h_{1+}
\end{bmatrix} \in CP^+$ and $\begin{bmatrix}
	 h_{2-}\\
	h_{1-}
\end{bmatrix}\in CP^-$, so Theorem \ref{8.1} implies that $\apb$ is injective. By Theorems \ref{2.2} and \ref{4.2} the operator $\Dta$ is also injective.

Now we prove (2). If $\theta\bar\alpha$ is a finite Blaschke product of degree $k$, then we can factorize $\theta\bar\alpha=R_- z^k R_+$, where $R_{\pm}$ are rational functions such that $R_+,\bar{R}_-\in \mathcal{G}H^{\infty}$. Since $\det (B^{-1}A)\,=\,-\theta\bar\alpha$, the result follows from Theorems \ref{8.2} and \ref{4.2}.
\end{proof}

Dual truncated Toeplitz operators can also be related to corona problems by using Corollary \ref{5.2} and the known relations between truncated Toeplitz operators and the corona theorem (\cite {CP_ATTO, CP_spectral}), as in the following theorem which will be used in the next section to describe the spectrum of a class of dual truncated Toeplitz operators with analytic symbols.

\begin{theorem}\label{8.4}
Let $\theta$ be an inner function. If $\varphi \in \mathcal{G}L^{\infty}$ and there exist $\mathfrak{h}_+ \in H^\infty\oplus H^\infty,\,\overline{\mathfrak{h}_-}\in CP^+$ such that $G \mathfrak{h}_+=\mathfrak{h}_-$ with
\begin{equation*}
   G=\begin{bmatrix}
     \bar\theta & 0 \\
     \varphi^{-1} &\theta
    \end{bmatrix},
\end{equation*}
then $\Dt$ is invertible if and only if $\mathfrak{h}_+\in CP^+$.
\end{theorem}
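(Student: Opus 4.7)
The plan is to reduce the invertibility of $\Dt$ to that of a $2\times 2$ block Toeplitz operator $T_G$ and then characterize the latter via corona pair information. Since $\varphi \in \mathcal{G}L^\infty$, Corollary \ref{5.2} yields $\Dt \overset{\star}\sim \At$, and Theorem \ref{2.3} applied to $\At$ gives $\At \overset{\star}\sim T_G$ with the matrix $G$ as in the statement. By Theorem \ref{2.2}, $\Dt$ is invertible if and only if $T_G$ is invertible; and by \eqref{e22}, $T_G \overset{\star}\sim GP^+ + P^-$ on $L^2\oplus L^2$.

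For the sufficiency direction, suppose $\mathfrak{h}_+\in CP^+$. I would apply Theorem \ref{8.2} with $A=G$ and $B=I$. These lie in $\mathcal{G}(L^\infty)^{2\times 2}$, and $\det(B^{-1}A)=\det G=\bar\theta\cdot\theta=1$, which is a factorization $f_-z^kf_+$ with $f_\pm=1$ and $k=0$. The hypothesis $G\mathfrak{h}_+=\mathfrak{h}_-$ reads $A\mathfrak{h}_++B(-\mathfrak{h}_-)=0$, with $\mathfrak{h}_+\in CP^+$ by assumption and $-\mathfrak{h}_-\in CP^-$ (since $\overline{\mathfrak{h}_-}\in CP^+$ is equivalent to $\mathfrak{h}_-\in CP^-$). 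Theorem \ref{8.2} then yields invertibility of $GP^++P^-$, and hence of $\Dt$.

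For the necessity direction, assume $\Dt$ is invertible, so $T_G$ is invertible on $H^2\oplus H^2$. Since $G\in\mathcal{G}(L^\infty)^{2\times 2}$ with $\det G=1$, invertibility of $T_G$ is equivalent to the existence of a canonical Wiener--Hopf factorization $G=G_-G_+$ with $G_+^{\pm 1}\in(H^\infty)^{2\times 2}$ and $G_-^{\pm 1}\in(\overline{H^\infty})^{2\times 2}$ (a classical result for invertible block Toeplitz symbols of index zero). Substituting into $G\mathfrak{h}_+=\mathfrak{h}_-$ gives $G_+\mathfrak{h}_+=G_-^{-1}\mathfrak{h}_-$, whose two sides lie in $(H^\infty)^2$ and $(\overline{H^\infty})^2$ respectively, so both equal some constant vector $\mathbf{c}\in\mathbb{C}^2$. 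Since $\mathfrak{h}_-=G_-\mathbf{c}$ is not identically zero (it belongs to $CP^-$), necessarily $\mathbf{c}\neq 0$. Then $\mathfrak{h}_+(z)=G_+^{-1}(z)\mathbf{c}$ in $\mathbb{D}$, and the pointwise bound $|\mathbf{c}|\le\|G_+\|_\infty|\mathfrak{h}_+(z)|$ gives $\inf_{z\in\mathbb{D}}(|h_{1+}(z)|+|h_{2+}(z)|)\ge|\mathbf{c}|/\|G_+\|_\infty>0$, so $\mathfrak{h}_+\in CP^+$.

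The main obstacle is the necessity direction: one must invoke the canonical Wiener--Hopf factorization of $G$, because the corona conditions on $\mathfrak{h}_+$ and $\mathfrak{h}_-$ live on the disjoint regions $\mathbb{D}^+$ and $\mathbb{D}^-$, and the boundary identity $G\mathfrak{h}_+=\mathfrak{h}_-$ on $\mathbb{T}$ does not by itself transfer a pointwise lower bound from one region to the other.
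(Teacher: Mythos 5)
Your reduction is exactly the paper's: Corollary \ref{5.2} plus Theorem \ref{2.3} give $\Dt \overset{\star}\sim A^{\theta}_{\varphi^{-1}} \overset{\star}\sim T_G$, after which the paper simply quotes Theorem 3.11 of \cite{CP_spectral} for the corona characterization of invertibility of $T_G$. You instead try to prove that characterization from scratch. Your sufficiency half is correct and is a nice internal shortcut: applying Theorem \ref{8.2} with $A=G$, $B=I$, $\det(B^{-1}A)=\bar\theta\theta=1$ (so $k=0$), and the pair $\mathfrak{h}_+\in CP^+$, $-\mathfrak{h}_-\in CP^-$ satisfying $A\mathfrak{h}_+ + B(-\mathfrak{h}_-)=0$ does yield invertibility of $GP^++P^-$ and hence of $\Dt$.

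The necessity half, however, has a genuine gap. You assert that invertibility of $T_G$ for $G\in\mathcal{G}(L^\infty)^{2\times 2}$ with $\det G=1$ is equivalent to a canonical Wiener--Hopf factorization $G=G_-G_+$ with $G_+^{\pm1}\in (H^\infty)^{2\times 2}$ and $G_-^{\pm1}\in(\overline{H^\infty})^{2\times 2}$. That is not a classical result and it is false for general $L^\infty$ symbols: what invertibility of $T_G$ gives (Simonenko's theorem) is a canonical \emph{generalized} factorization in $L^2$, whose factors $G_\pm^{\pm1}$ only have entries in $H^2$ (resp.\ $\overline{H^2}$) together with a boundedness condition on the operator $G_+^{-1}P^+G_-^{-1}I$; already in the scalar case (Devinatz--Widom, Helson--Szeg\H{o}) the factors need not be bounded. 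Your Liouville step survives in the generalized setting ($G_+\mathfrak{h}_+=G_-^{-1}\mathfrak{h}_-\in (H^1)^2\cap(\overline{H^1})^2=\mathbb{C}^2$, so $\mathfrak{h}_+=G_+^{-1}\mathbf{c}$ with $\mathbf{c}\neq 0$), but the final estimate $|\mathbf{c}|\leqslant \|G_+\|_\infty\,|\mathfrak{h}_+(z)|$ collapses because $\sup_{z\in\mathbb{D}}\|G_+(z)\|$ may be infinite, and then no positive lower bound on $|h_{1+}(z)|+|h_{2+}(z)|$ follows. So the implication ``$\Dt$ invertible $\Rightarrow \mathfrak{h}_+\in CP^+$'' is not established by your argument; to close it you would either need to restrict to a symbol class admitting bounded factorizations or, as the paper does, invoke Theorem 3.11 of \cite{CP_spectral}, whose proof handles precisely this point.
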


\begin{proof}
In this case $\Dt$ is invertible if and only if $A^{\theta}_{\varphi^{-1}}$ is invertible, and this is equivalent to the Toeplitz operator $T_G$ being invertible. The result now follows from Theorem 3.11 in \cite{CP_spectral}.
\end{proof}

\section{Fredholmness, inwertibility and spectra of dual truncated Toeplitz operators}

In this section we apply the previous results to study several classes of dual truncated Toeplitz operators. In what follows $\theta$ is always an inner function.

\subsection{Analytic symbols}
We start by using the results of Theorem \ref{8.4} to describe the spectrum of dual truncated Toeplitz operators with analytic symbols of a particular type.

\begin{theorem}\label{9.1}
	If $\varphi \in H^{\infty}$ and $\bar\theta \varphi\in \overline{H^{\infty}}$, then $\sigma(D_{\varphi}^{\theta})=\clos\varphi(\mathbb{D})$.
\end{theorem}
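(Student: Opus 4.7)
The plan is to establish the two inclusions $\sigma(\Dt) \subseteq \clos\varphi(\mathbb{D})$ and $\clos\varphi(\mathbb{D}) \subseteq \sigma(\Dt)$ separately. In each case I use Corollary \ref{5.3} to translate the invertibility of $\Dt - \lambda = D^\theta_{\varphi - \lambda}$ into the invertibility of the truncated Toeplitz operator $A^\theta_{(\varphi - \lambda)^{-1}}$; the hypotheses on $\varphi$ are preserved under $\varphi \mapsto \varphi - \lambda$, so the reduction is legitimate whenever $\varphi - \lambda \in \mathcal{G}L^\infty$.

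For $\sigma(\Dt) \subseteq \clos\varphi(\mathbb{D})$, if $\lambda \notin \clos\varphi(\mathbb{D})$ then $\varphi - \lambda$ has no zeros in $\mathbb{D}$ and is bounded below on $\mathbb{T}$, so $(\varphi - \lambda)^{-1} \in H^{\infty}$. Using the standard product rule $A^\theta_\psi A^\theta_\phi = A^\theta_{\psi\phi}$ for symbols in $H^{\infty}$ (which follows because $\phi f \in H^2$ whenever $f \in K_\theta$, so the intermediate $P^-$ drops out), one sees that $A^\theta_{(\varphi - \lambda)^{-1}}$ is the two-sided inverse of $A^\theta_{\varphi - \lambda}$ on $K_\theta$. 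Corollary \ref{5.3} then gives the invertibility of $\Dt - \lambda$, i.e., $\lambda \notin \sigma(\Dt)$.

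For the reverse inclusion, $\varphi(\mathbb{T}) \subseteq \sigma(\Dt)$ is given by \eqref{w4.3}; since $\sigma(\Dt)$ is closed and $\clos\varphi(\mathbb{D}) = \varphi(\mathbb{D}) \cup \varphi(\mathbb{T})$ (the boundary of the open set $\varphi(\mathbb{D})$ lies in $\varphi(\mathbb{T})$), it suffices to treat $\lambda \in \varphi(\mathbb{D}) \setminus \varphi(\mathbb{T})$. For such $\lambda$, the hypothesis $\theta\bar\varphi \in H^{\infty}$ allows me to set $u := \theta\bar\varphi - \bar\lambda\theta \in H^{\infty}$, which satisfies $\bar\theta u = \bar\varphi - \bar\lambda$ on $\mathbb{T}$; hence $\varphi - \lambda = \theta \bar u$, $(\varphi - \lambda)^{-1} = \bar\theta/\bar u$, and taking conjugates $(A^\theta_{(\varphi - \lambda)^{-1}})^* = A^\theta_{\theta/u}$. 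I seek a kernel element of this adjoint in the form $g = uh$ with $h \in H^2$: then $(\theta/u)g = \theta h \in \theta H^2 \subseteq (K_\theta)^\perp$, so $P_\theta$ annihilates it automatically, and the only remaining condition is $g \in K_\theta$. Using $\bar\theta u = \bar\varphi - \bar\lambda$ again, this reads $(\bar\varphi - \bar\lambda)h \in H^2_-$, i.e.\ $h \in \ker T_{\overline{\varphi - \lambda}} = \ker(T_{\varphi - \lambda})^*$; the latter is isomorphic to the model space $K_{I_{\varphi - \lambda}}$ of the inner factor of $\varphi - \lambda$, which is nontrivial because $\lambda \in \varphi(\mathbb{D})$ forces a zero in $\mathbb{D}$. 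A nonzero $h$ there gives $g = uh \neq 0$ in $\ker A^\theta_{\theta/u}$, so $A^\theta_{(\varphi - \lambda)^{-1}}$ fails to be surjective and Corollary \ref{5.3} places $\lambda$ in $\sigma(\Dt)$.

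The main obstacle is the reverse inclusion: the hypothesis $\bar\theta\varphi \in \overline{H^\infty}$ is precisely what produces the $H^\infty$ function $u$ absorbing the $\bar\theta$ factor, and the key insight is that this structural identity reduces the adjoint kernel problem for $A^\theta_{(\varphi - \lambda)^{-1}}$ to the classical kernel of $T_{\overline{\varphi - \lambda}}$, whose nontriviality is controlled by the inner factor of $\varphi - \lambda$ via Beurling's theorem.
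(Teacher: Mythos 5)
Your reduction via Corollary \ref{5.3} is legitimate, and your first inclusion ($\lambda\notin\clos\varphi(\mathbb{D})$ implies $(\varphi-\lambda)^{-1}\in\mathcal{G}H^\infty$, hence $A^{\theta}_{(\varphi-\lambda)^{-1}}$ is invertible by multiplicativity of $H^\infty$-symbol compressions to $K_\theta$) is correct. The gap is in the reverse inclusion, at the set-theoretic claim $\clos\varphi(\mathbb{D})=\varphi(\mathbb{D})\cup\varphi(\mathbb{T})$ (with $\varphi(\mathbb{T})$ the essential range). This is false under the hypotheses of the theorem: take $\varphi=\theta=s$ a singular inner function, so $\varphi\in H^\infty$ and $\bar\theta\varphi=1\in\overline{H^\infty}$. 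Then $\clos s(\mathbb{D})=\clos\mathbb{D}$, yet $s$ has no zeros in $\mathbb{D}$ (so $0\notin s(\mathbb{D})$) and $|s|=1$ a.e.\ on $\mathbb{T}$ (so $0\notin s(\mathbb{T})$). Hence $\lambda=0$ belongs to $\clos\varphi(\mathbb{D})$ but to neither of your two cases, and --- more to the point --- your argument for the nontriviality of $\ker T_{\overline{\varphi-\lambda}}$ rests on ``$\lambda\in\varphi(\mathbb{D})$ forces a zero in $\mathbb{D}$,'' which is unavailable for such $\lambda$. (Your parenthetical justification, that the boundary of $\varphi(\mathbb{D})$ lies in the essential boundary range, is exactly what fails for functions with nonconstant inner factor.)

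The repair is short and stays within your framework: for any $\lambda\in\clos\varphi(\mathbb{D})$ with $\lambda\notin\varphi(\mathbb{T})$ one has $\essinf_{\mathbb{T}}|\varphi-\lambda|>0$ while $\inf_{\mathbb{D}}|\varphi-\lambda|=0$. The outer factor $O$ of $\varphi-\lambda$ then satisfies $|O(z)|=\exp\bigl(\int P_z\log|O|\,dm\bigr)\geq\essinf_{\mathbb{T}}|\varphi-\lambda|>0$ on $\mathbb{D}$, so the inner factor $I$ of $\varphi-\lambda$ must be nonconstant (Blaschke or purely singular). Since $K_I\neq\{0\}$ for every nonconstant inner $I$ and $K_I\subseteq\ker T_{\overline{\varphi-\lambda}}$, your construction $g=uh$ with $0\neq h\in K_I$ goes through verbatim and shows $A^{\theta}_{(\varphi-\lambda)^{-1}}$ is not surjective. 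With that one correction your proof is complete, and it is genuinely different from the paper's: the paper feeds the identity $G\mathfrak{h}_+=\mathfrak{h}_-$ with $\mathfrak{h}_+=\begin{bmatrix}\varphi-\lambda\\ 0\end{bmatrix}$ into the corona criterion of Theorem \ref{8.4} and reads off invertibility as the single condition $\inf_{z\in\mathbb{D}}|\varphi(z)-\lambda|>0$, which handles both inclusions at once, whereas your route trades the corona theorem for an explicit inner--outer analysis of $\varphi-\lambda$ and an explicit kernel element of the adjoint.
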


\begin{proof}
	If $\lambda \in \varphi(\mathbb{T})$, then $\varphi - \lambda \notin \mathcal{G}L^{\infty}$, so $D_{\varphi-\lambda}^{\theta}$ is not Fredholm. If $\lambda \notin \varphi(\mathbb{T})$, then $\varphi-\lambda \in \mathcal{G}L^{\infty}$ and we have
	\[\bmatrix \bar\theta & 0 \\ \frac 1{\varphi-\lambda} & \theta \endbmatrix
	\bmatrix \varphi-\lambda \\ 0 \endbmatrix=
	\bmatrix \bar\theta(\varphi-\lambda) \\ 1\endbmatrix.\]
	Since the right hand side of the above equality belongs to $CP^-$, by Theorem \ref{8.4} we have that $D_{\varphi-\lambda}^{\theta}$ in invertible if and only if $\begin{bmatrix}
	\varphi-\lambda \\
	0
	\end{bmatrix}\in CP^+$. The latter is equivalent to $(\varphi-\lambda)\in \mathcal{G} H^{\infty}$, i.e., $$\inf_{z\in \mathbb{D}}|\varphi(z)-\lambda|>0.$$ Since $\varphi(\mathbb{T})\subset \clos\varphi(\mathbb{D})$, we conclude that $\sigma(D_{\varphi}^{\theta})=\clos\varphi(\mathbb{D}) $.
\end{proof}

The assumptions of Theorem \ref{9.1}, are satisfied in particular if $\varphi \in K_{z\theta} \cap L^{\infty}$ since, in that case, $\varphi \in H^{\infty}$ and $\bar\theta \varphi \in \overline{H^{2}}\cap L^{\infty}=\overline{H^{\infty}}$ (see \cite[Theorem 4.3]{Ding Qin Sang}).

Several other results regarding analytic symbols will be obtained from the properties studied below.

\subsection{Symbols with analytic or co-analytic inverse}

We consider now symbols $\varphi\in \mathcal{G}L^{\infty}$ such that $\varphi^{-1}$ is in $H^{\infty}$ or in $\overline{H^{\infty}}$. In that case, by Corollary \ref{5.2}, $D_{\varphi}^{\theta}$ is equivalent after extension to a truncated Toeplitz operator $A^{\theta}_{\varphi^{-1}}$; therefore we start by recalling the following.

\begin{theorem}[\cite{CP_ATTO, CP_spectral}]\label{9.2}
	Let $g_+\in H^{\infty}$ and denote by $g_+^i$ the inner factor of $g_+$. Then:
	\begin{enumerate}
		\item $A^{\theta}_{g_+}$ is Fredholm if and only if $\gamma=\GCD(\theta,g_+^i)$ is a finite Blaschke product and $\bar\gamma \begin{bmatrix}
		\theta \\
		g_+
		\end{bmatrix}\in CP^+$,
		\item $A^{\theta}_{g_+}$ is invertible if and only if $\begin{bmatrix}
		\theta \\
		g_+
		\end{bmatrix}\in CP^+$,
		\item $\ker A^{\theta}_{g_+} = \frac{\theta}{\gamma}K_{\gamma}$.
	\end{enumerate}
\end{theorem}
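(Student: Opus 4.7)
The plan is to combine Theorem \ref{2.3} (equivalence after extension of $A^{\theta}_{g_+}$ to a block Toeplitz operator) with a direct inner–outer factorization argument. Part (3) I would prove directly, then deduce the kernel descriptions needed for (1) and (2) from the corresponding block Toeplitz operator, whose Fredholmness/invertibility is controlled by a Bezout-type condition coming from Carleson's corona theorem.

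For part (3), an element $f$ lies in $\ker A^\theta_{g_+}$ iff $f\in K_\theta$ and $P_\theta(g_+ f)=0$. Since $g_+ f\in H^2$, the latter is just $g_+ f\in\theta H^2$. Write $g_+=g_+^i g_+^o$ (inner–outer), $\theta=\gamma\theta_1$ and $g_+^i=\gamma g_1$, with $\GCD(\theta_1,g_1)=1$. The condition $g_+ f\in\theta H^2$ becomes $g_1 g_+^o f\in\theta_1 H^2$; since $g_1 g_+^o$ is coprime to $\theta_1$ in $H^\infty$ (the outer factor is cancellable in $H^2$ and the inner factors are coprime), this forces $f\in\theta_1 H^2$. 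Combined with $f\in K_\theta=K_{\gamma\theta_1}$, writing $f=\theta_1 h$ and using $\theta_1 h\perp\gamma\theta_1 H^2$, we get $h\in K_\gamma$, so $\ker A^\theta_{g_+}=\theta_1 K_\gamma=(\theta/\gamma)K_\gamma$.

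For parts (1) and (2), I would invoke Theorem \ref{2.3} (with $\alpha=\theta$, $\varphi=g_+$) to get $A^\theta_{g_+}\overset{\star}\sim T_\Phi$ where $\Phi=\begin{bmatrix}\bar\theta & 0\\ g_+ & \theta\end{bmatrix}$, and then translate everything to $T_\Phi$ via Theorem \ref{2.2}. Observe $\det\Phi=1$ a.e.\ on $\mathbb T$, so the sum of partial indices in any Wiener–Hopf factorization of $\Phi$ is zero. For invertibility, the equation $T_\Phi F=G$ reduces by the triangular structure to the existence of $u,v\in H^\infty$ with $u\theta+vg_+=1$, i.e.\ $\left[\begin{smallmatrix}\theta\\ g_+\end{smallmatrix}\right]\in CP^+$: given such a corona pair, one constructs a bounded inverse of $T_\Phi$ explicitly from $(u,v)$, while conversely invertibility of $T_\Phi$ forces solvability of the Bezout equation (by applying $T_\Phi$ to suitable test functions). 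For Fredholmness, the trivial common inner part $\gamma=\GCD(\theta,g_+^i)$ produces a finite-dimensional kernel (by part (3)), so $\gamma$ must be a finite Blaschke product; after peeling off $\gamma$ one reduces to the invertibility situation for the coprime pair, giving the condition $\bar\gamma\left[\begin{smallmatrix}\theta\\ g_+\end{smallmatrix}\right]\in CP^+$.

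The main obstacle is the Fredholm/invertibility direction for the block Toeplitz operator $T_\Phi$: one must show that the corona condition is not only sufficient (which follows constructively from the Bezout identity) but also necessary, and one must correctly track how a finite common inner factor $\gamma$ contributes a finite-dimensional kernel without otherwise affecting Fredholmness. This is exactly where Carleson's corona theorem does the heavy lifting, guaranteeing the \emph{bounded} $H^\infty$ solutions needed to construct the inverse or a Fredholm regularizer, and it is the step that cannot be handled by purely algebraic manipulation of the factorization $g_+=g_+^i g_+^o$, $\theta=\gamma\theta_1$.
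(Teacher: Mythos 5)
First, a point of comparison: the paper does not prove this theorem at all --- it is imported from \cite{CP_ATTO, CP_spectral} as a known result --- so there is no internal proof to measure your attempt against. Your overall strategy (reduce $A^\theta_{g_+}$ to the block Toeplitz operator $T_\Phi$ via Theorem \ref{2.3} and control its invertibility by a Bezout/corona condition) is the same one used in those references, and your proof of part (3) is complete and correct: the inner--outer bookkeeping showing $g_+f\in\theta H^2$ iff $\theta_1$ divides the inner factor of $f$, hence $\ker A^\theta_{g_+}=\theta_1K_\gamma$, is exactly right.

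The genuine gap is in the necessity direction of part (2). If $T_\Phi$ is invertible, then applying $T_\Phi^{-1}$ to $\left[\begin{smallmatrix}0\\1\end{smallmatrix}\right]$ (your ``suitable test functions'') yields $a\in K_\theta$, $b\in H^2$ with $g_+a+\theta b=1$; but this is a Bezout identity with $H^2$ coefficients only, which is strictly weaker than $\left[\begin{smallmatrix}\theta\\ g_+\end{smallmatrix}\right]\in CP^+$. Indeed, if $\theta(\lambda_n)=0$ and $g_+(\lambda_n)\to 0$, the identity forces $a(\lambda_n)=1/g_+(\lambda_n)\to\infty$, which an $H^2$ function is perfectly able to do, so one cannot conclude $\inf_{\mathbb D}(|\theta|+|g_+|)>0$. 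Carleson's theorem converts the corona condition into \emph{bounded} Bezout solutions --- that is the sufficiency half --- but it cannot upgrade unbounded $H^2$ solutions to $H^\infty$ ones, so it does not deliver necessity. Necessity needs a different argument: e.g.\ the spectral mapping theorem for the $H^\infty$-calculus of the model operator (noting $A^\theta_{g_+}=g_+(S_\theta)$), or Theorem 3.11 of \cite{CP_spectral} applied with $\mathfrak{h}_+=\left[\begin{smallmatrix}\theta\\-g_+\end{smallmatrix}\right]$, $\mathfrak{h}_-=\left[\begin{smallmatrix}1\\0\end{smallmatrix}\right]$. The same issue propagates into part (1), where ``after peeling off $\gamma$ one reduces to the invertibility situation'' is asserted but not carried out (this reduction is itself a theorem in \cite{CP_spectral}). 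As written, the proposal proves (3), proves the sufficiency halves of (1) and (2), and for the necessity halves reduces the statement back to the cited results rather than establishing it.
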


Using Theorem \ref{2.2} and Corollary \ref{5.2} we now obtain the following.

\begin{theorem}\label{9.3}
	Let $\varphi \in \mathcal{G}L^{\infty}$. Assume that $\varphi^{-1}\in H^{\infty}$ and let $\varphi^{-1}=\beta a_+$ be its standard inner--outer decomposition ($\beta$--inner, $a_+$--outer). Denote $\gamma = \GCD(\theta,\beta)$. Then
	\begin{enumerate}
		\item $D_{\varphi}^{\theta}$ is Fredholm if and only if $\gamma$ is a finite Blaschke product,
		\item $D_{\varphi}^{\theta}$ is invertible if and only if $\begin{bmatrix}
		\theta \\
		\beta
		\end{bmatrix}\in CP^+$,
		\item $\ker D_{\varphi}^{\theta}=\beta a_+ \frac{\theta}{\gamma} K_{\gamma}\subset \frac{\theta\beta}{\gamma}H^2$.
	\end{enumerate}
\end{theorem}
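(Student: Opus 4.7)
The plan is to derive Theorem~9.3 as a transparent corollary of Corollary~5.2 and Theorem~9.2. Since $\varphi\in\mathcal{G}L^{\infty}$, Corollary~5.2 yields $\Dt \overset{\star}\sim \At$, so by Theorem~2.2 the two operators share the Fredholm and invertibility properties and have isomorphic kernels. A preparatory observation I would record is that $|\varphi^{-1}|=|a_+|$ is bounded below on $\mathbb{T}$ because $\varphi\in\mathcal{G}L^{\infty}$; since $a_+$ is outer, the Poisson representation of $\log|a_+|$ propagates this lower bound to $\mathbb{D}$, forcing $a_+ \in \mathcal{G}H^{\infty}$ with $\inf_{\mathbb{D}}|a_+|\geq c>0$. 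In particular the inner factor of $\varphi^{-1}=\beta a_+$ is $\beta$, so $\GCD(\theta,(\varphi^{-1})^i)=\gamma$, which matches the hypothesis under which Theorem~9.2 is to be applied.

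For parts (1) and (2) I would apply Theorem~9.2 directly to $\At = A^{\theta}_{\beta a_+}$. Invertibility by Theorem~9.2(2) requires $\begin{bmatrix}\theta \\ \beta a_+\end{bmatrix}\in CP^+$; using the lower bound $|a_+(z)|\geq c>0$ on $\mathbb{D}$,
\[
|\theta(z)|+|\beta(z)a_+(z)|\geq \min(1,c)\bigl(|\theta(z)|+|\beta(z)|\bigr),
\]
so this condition is equivalent to $\begin{bmatrix}\theta \\ \beta\end{bmatrix}\in CP^+$, which is exactly (2). For Fredholmness, Theorem~9.2(1) gives $\gamma$ finite Blaschke together with the corona condition $\bar\gamma\begin{bmatrix}\theta \\ \beta a_+\end{bmatrix}\in CP^+$; the same modulus estimate, now applied to the quotients, reduces this to a corona condition on the coprime inner pair $(\theta/\gamma,\beta/\gamma)$.

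For the kernel formula in (3) I would invoke Theorem~6.6 with $\alpha=\theta$ (legitimate because $\varphi\in\mathcal{G}L^{\infty}$), obtaining the pointwise equality $\ker \Dt = \varphi^{-1}\ker \At$. Combined with the explicit description $\ker \At = \tfrac{\theta}{\gamma}K_{\gamma}$ from Theorem~9.2(3), this yields $\ker \Dt = \beta a_+\tfrac{\theta}{\gamma}K_{\gamma}$. The claimed inclusion $\beta a_+\tfrac{\theta}{\gamma}K_{\gamma}\subset \tfrac{\theta\beta}{\gamma}H^2$ is immediate from $a_+\in H^{\infty}$ and $K_{\gamma}\subset H^2$, which together give $a_+K_{\gamma}\subset H^2$.

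The principal subtlety lies in part (1): one must reconcile the single hypothesis ``$\gamma$ is a finite Blaschke product'' with the two conditions (GCD plus a corona pair) appearing in Theorem~9.2(1). Either the context $\varphi\in \mathcal{G}L^{\infty}$ forces the residual corona condition on $(\theta/\gamma,\beta/\gamma)$ to be automatic, or the GCD in Theorem~9.2 is tacitly understood in a sense already incorporating the corona condition on quotients; this is the step where extra care in bookkeeping is required. Everything else is a straightforward chain of substitutions into results already proved earlier in the paper.
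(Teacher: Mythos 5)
Your proposal follows the paper's own route exactly: parts (1) and (2) are obtained from Corollary \ref{5.2}, Theorem \ref{2.2} and Theorem \ref{9.2} applied to $A^{\theta}_{\varphi^{-1}}$, and part (3) from Theorem \ref{6.6} together with Theorem \ref{9.2}(3). The one genuinely useful detail you add is the observation that $\varphi\in\mathcal{G}L^{\infty}$ forces the outer factor $a_+$ of $\varphi^{-1}$ to lie in $\mathcal{G}H^{\infty}$ (via the Poisson representation of $\log|a_+|$), so that corona conditions stated with $\beta a_+$ and with $\beta$ are interchangeable; the paper uses this silently when it asserts that $\begin{bmatrix}\theta \\ \beta\end{bmatrix}\in CP^+$ is equivalent to $\begin{bmatrix}\theta \\ \varphi^{-1}\end{bmatrix}\in CP^+$.

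The subtlety you flag in part (1) is real, and the paper's one-line proof does not resolve it either. The residual condition $\bar\gamma\begin{bmatrix}\theta\\ \beta\end{bmatrix}\in CP^+$ coming from Theorem \ref{9.2}(1) is \emph{not} automatic once $\gamma=\GCD(\theta,\beta)$ is a finite Blaschke product: two coprime infinite Blaschke products whose zero sequences approach each other in the pseudohyperbolic metric have constant $\GCD$ but do not form a corona pair. Your first alternative (``the context forces the residual corona condition'') therefore fails, and one can see internally that (1) as literally stated is inconsistent with the rest of the theorem: if $\gamma$ is constant then (3) gives $\ker D_{\varphi}^{\theta}=\{0\}$, so Fredholmness would imply invertibility by Corollary \ref{6.5}, contradicting (2) whenever $\begin{bmatrix}\theta\\ \beta\end{bmatrix}\notin CP^+$. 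The correct conclusion of the substitution you perform is that $D_{\varphi}^{\theta}$ is Fredholm if and only if $\gamma$ is a finite Blaschke product \emph{and} $\bar\gamma\begin{bmatrix}\theta\\ \beta\end{bmatrix}\in CP^+$; the defect is in the statement (and the paper's terse proof), not in your argument, so you should state the strengthened condition rather than hope the bookkeeping resolves itself.
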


\begin{proof}
	Parts (1) and (2) follow from Theorem \ref{9.2}, Theorem \ref{2.2} and Corollary \ref{5.2}. Note that the condition $\begin{bmatrix}
	\theta \\
	\beta
	\end{bmatrix}\in CP^+$ in (2) is equivalent to $\begin{bmatrix}
	\theta \\
	\varphi^{-1}
	\end{bmatrix}\in CP^+$ in this case. On the other hand, by Theorem \ref{6.6},
	\[\ker D_{\varphi}^{\theta}=\varphi^{-1}\ker A_{\varphi^{-1}}^{\theta}=\beta a_+\tfrac{\theta}{\gamma}K_{\gamma}.\]
\end{proof}

\begin{corollary}\label{9.4}
	If $\varphi\in \mathcal{G}H^{\infty}$, then $D_{\varphi}^{\theta}$ is invertible.
\end{corollary}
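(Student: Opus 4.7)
The plan is to deduce this directly from Theorem \ref{9.3}(2). First I would observe that $\varphi \in \mathcal{G}H^{\infty}$ means $\varphi^{-1}$ also lies in $H^{\infty}$, so the hypotheses of Theorem \ref{9.3} are satisfied and we may write $\varphi^{-1} = \beta a_+$ as in the statement of that theorem. The key elementary fact I would invoke is that an invertible element of $H^\infty$ has no non-trivial inner factor: if $\varphi^{-1} = \beta a_+$ with $\beta$ inner and $a_+$ outer, then $\beta$ must be a unimodular constant (since $\beta$ inherits invertibility in $H^\infty$ from $\varphi^{-1}$, and the only invertible inner functions are the unimodular constants). So we may take $\beta = 1$.

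With $\beta = 1$, the $\GCD$ satisfies $\gamma = \GCD(\theta, 1) = 1$, which is trivially a finite Blaschke product (of degree zero). Moreover, the corona condition
\[
\begin{bmatrix} \theta \\ \beta \end{bmatrix} = \begin{bmatrix} \theta \\ 1 \end{bmatrix} \in CP^+
\]
is immediate, since $|\theta(z)| + 1 \geqslant 1$ for every $z \in \mathbb{D}$, so the infimum in the definition of $CP^+$ is bounded below by $1$. By Theorem \ref{9.3}(2), this yields invertibility of $D_\varphi^\theta$.

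There is essentially no obstacle here; the only thing worth double-checking is the identification of the inner factor of $\varphi^{-1}$ as trivial when $\varphi^{-1}\in \mathcal{G}H^\infty$, which follows at once from the inner--outer factorization being multiplicative. One could alternatively derive the same conclusion from Corollary \ref{5.2} together with Theorem \ref{9.2}(2) applied to $A^\theta_{\varphi^{-1}}$ with the outer symbol $\varphi^{-1}$, but the route through Theorem \ref{9.3}(2) is the shortest.
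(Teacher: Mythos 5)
Your proof is correct and follows exactly the paper's own route: the paper likewise notes that $\varphi^{-1}\in\mathcal{G}H^{\infty}$ forces the inner factor $\beta$ to be a unimodular constant, whence $\begin{bmatrix}\theta\\ \beta\end{bmatrix}\in CP^+$ trivially and Theorem \ref{9.3}(2) applies. No issues.
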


\begin{proof}
	In this case $\beta \in \mathbb{C}$ with $|\beta|=1$, therefore $\begin{bmatrix}
	\theta \\
	\beta
	\end{bmatrix}\in CP^+$.
\end{proof}

\begin{corollary}\label{9.5}
	If $\theta$ is a singular inner function, then $D^{\theta}_{\varphi}$ is Fredholm if and only if it is invertible.
\end{corollary}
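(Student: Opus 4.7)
The plan is to combine Theorem \ref{9.3} with Corollary \ref{6.5}, exploiting the fact that a singular inner function has no zeros in $\mathbb{D}$ while a nontrivial finite Blaschke product must have zeros there.

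First I would handle the case $\varphi^{-1}\in H^{\infty}$ directly. Suppose $D^{\theta}_{\varphi}$ is Fredholm. By Corollary \ref{4.6} we have $\varphi\in \mathcal{G}L^{\infty}$, so the inner--outer decomposition $\varphi^{-1}=\beta a_+$ is available, and Theorem \ref{9.3}(1) asserts that Fredholmness is equivalent to $\gamma=\GCD(\theta,\beta)$ being a finite Blaschke product. Since $\gamma$ divides $\theta$ and $\theta$ is singular (hence has no zeros in $\mathbb{D}$), $\gamma$ also has no zeros in $\mathbb{D}$. A finite Blaschke product is determined up to a unimodular constant by its zeros, so $\gamma$ must be a unimodular constant. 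Consequently $K_{\gamma}=\{0\}$, and Theorem \ref{9.3}(3) gives
\[
\ker D^{\theta}_{\varphi}=\beta a_{+}\tfrac{\theta}{\gamma}K_{\gamma}=\{0\}.
\]
Corollary \ref{6.5} then yields that $D^{\theta}_{\varphi}$ is invertible. The converse is trivial.

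To cover the co-analytic case $\varphi^{-1}\in \overline{H^{\infty}}$, I would pass to the adjoint: $(D^{\theta}_{\varphi})^{*}=D^{\theta}_{\bar\varphi}$, and $\bar\varphi^{-1}=\overline{\varphi^{-1}}\in H^{\infty}$. Fredholmness and invertibility are each preserved under taking adjoints, so applying the previous paragraph to $D^{\theta}_{\bar\varphi}$ completes the argument.

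The main conceptual point is really the elementary observation that $\GCD(\theta,\beta)$ divides the singular inner function $\theta$, so it inherits the absence of zeros in $\mathbb{D}$; there is no genuine obstacle beyond invoking Theorem \ref{9.3} correctly and remembering to handle both inverse classes via Corollary \ref{6.5} and duality.
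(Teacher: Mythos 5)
Your proof is correct and follows essentially the same route as the paper's: both arguments reduce to Theorem \ref{9.3} together with Corollary \ref{6.5}, using the observation that any inner divisor $\gamma=\GCD(\theta,\beta)$ of a singular inner function has no zeros in $\mathbb{D}$, hence can only be a finite Blaschke product if it is a unimodular constant, forcing $K_\gamma=\{0\}$ and so $\ker D_\varphi^\theta=\{0\}$. The paper phrases this as a dichotomy ($\gamma$ constant versus non-constant singular, the latter giving an infinite-dimensional kernel) and handles the co-analytic case by the same passage to adjoints you use, so the two proofs coincide in substance.
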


\begin{proof}
	Using the same notation as in Theorem \ref{9.3} we have that $\gamma = \GCD(\theta,\beta)$ is either a constant or a singular inner function. So either $\ker D_{\varphi}^{\theta}=\{0\}$ and in that case the Fredholmness of  $D_{\varphi}^{\theta}$ is equivalent to its invertibility by Corollary \ref{6.5}, or  $\ker D_{\varphi}^{\theta}$ is infinite dimensional and in this case  $D_{\varphi}^{\theta}$ is neither Fredholm nor invertible.
\end{proof}

\begin{corollary}\label{9.6}
	Let $\beta$ be an inner function and $\gamma = \GCD(\theta,\beta)$. Then $\ker D_{\bar\beta}^{\theta} = \beta \frac{\theta}{\gamma}K_{\gamma}$. The operator $D_{\bar\beta}^{\theta} $ is Fredholm if and only if $\gamma$ is a finite Blaschke product, and it is invertible if and only if $\begin{bmatrix}
	 	\beta\\
	 	\theta
	\end{bmatrix}\in CP^+$.
\end{corollary}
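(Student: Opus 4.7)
The plan is to deduce this corollary as a direct specialization of Theorem \ref{9.3}. First I would verify that the hypothesis of that theorem applies: since $\beta$ is inner, we have $|\beta|=1$ a.e.\ on $\mathbb{T}$, so $\varphi=\bar\beta$ belongs to $\mathcal{G}L^\infty$ and its inverse is $\varphi^{-1}=\beta$, which lies in $H^\infty$. Hence Theorem \ref{9.3} is applicable.

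Next I would identify the standard inner--outer factorization of $\varphi^{-1}=\beta$: the inner factor is $\beta$ itself and the outer factor is $a_+\equiv 1$. With these identifications, $\gamma=\GCD(\theta,\beta)$ is exactly the quantity appearing in both statements.

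I would then read off the three conclusions. Part (1) of Theorem \ref{9.3} gives Fredholmness iff $\gamma$ is a finite Blaschke product. Part (3) gives
\[
\ker D_{\bar\beta}^{\theta}=\beta\cdot a_+\cdot\tfrac{\theta}{\gamma}K_{\gamma}=\beta\tfrac{\theta}{\gamma}K_{\gamma}.
\]
For invertibility, part (2) of Theorem \ref{9.3} says the criterion is $\bigl[\begin{smallmatrix}\theta\\ \beta\end{smallmatrix}\bigr]\in CP^+$; since the corona condition $\inf_{z\in\mathbb{D}}(|h_{1+}(z)|+|h_{2+}(z)|)>0$ is symmetric in the two entries, this is the same as $\bigl[\begin{smallmatrix}\beta\\ \theta\end{smallmatrix}\bigr]\in CP^+$, as stated.

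There is essentially no obstacle: the proof is a substitution $\varphi^{-1}\mapsto\beta$ in Theorem \ref{9.3}, together with the trivial observation that the entries of a corona pair can be swapped. The only point worth flagging is to make the inner--outer decomposition of $\beta$ explicit (so that the reader sees why the outer factor $a_+$ disappears from the kernel formula).
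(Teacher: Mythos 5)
Your proposal is correct and follows exactly the route the paper intends: Corollary \ref{9.6} is the specialization of Theorem \ref{9.3} to $\varphi=\bar\beta$, with $\varphi^{-1}=\beta$ inner so that the outer factor $a_+$ is constant, and the swap of entries in the corona pair is immaterial since the corona condition is symmetric. No gaps.
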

It follows from Corollary \ref{9.6} that we have
\[\ker D_{\bar\theta}^{\theta}=\theta K_{\theta}\] and $\theta \in \ker D_{\bar\theta}^{\theta}$ if and only if $\theta(0)=0$ (cf. \cite[Example 2.5]{Ding Sang}).

If $\varphi \in \mathcal{G}L^{\infty}$ and $\varphi^{-1}\in\overline{H^{\infty}}$, then we can reduce the study of Fredholmness and invertibility of $D_{\varphi}^{\theta}$ to that of $(D_{\varphi}^{\theta})^*=D_{\bar\varphi}^{\theta}$ and use Theorem \ref{9.3}. Regarding the kernel, we have the following:

\begin{corollary}\label{9.7}
	Let $\varphi \in \mathcal{G}L^{\infty}$. Assume that $\varphi^{-1}\in\overline{ H^{\infty}}$ and let $\overline{\varphi^{-1}}=\beta a_+$ be its standard inner--outer decomposition ($\beta$--inner, $a_+$--outer). Denote $\gamma = \GCD(\theta,\beta)$. Then
		\[\ker D_{\varphi}^{\theta}=C_{\theta}(\ker(D_{\varphi}^{\theta})^*)=
	\overline{\left(\frac{\beta}{\gamma}\right)}\overline{a_+}\bar z \overline{K_{\gamma}}\subset \overline{\left(\frac{\beta}{\gamma}\right)}H^2_-.\]
\end{corollary}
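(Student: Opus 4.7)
The plan is to combine the conjugation symmetry provided by Theorem \ref{6.3} with the closed--form kernel description in Theorem \ref{9.3}, applied on the adjoint side where the symbol has an analytic inverse.

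First I would note that the hypothesis $\varphi^{-1}\in \overline{H^{\infty}}$ is exactly what is needed to apply Theorem \ref{9.3} to the adjoint $(D_{\varphi}^{\theta})^*=D_{\bar\varphi}^{\theta}$: indeed $\bar\varphi \in \mathcal{G}L^{\infty}$, and $(\bar\varphi)^{-1}=\overline{\varphi^{-1}}=\beta a_+\in H^{\infty}$ is the prescribed inner--outer factorization. Theorem \ref{9.3} then yields
\begin{equation*}
\ker (D_{\varphi}^{\theta})^* \;=\; \ker D_{\bar\varphi}^{\theta} \;=\; \beta a_+\,\frac{\theta}{\gamma}\,K_{\gamma},
\end{equation*}
where $\gamma=\GCD(\theta,\beta)$.

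Next, Theorem \ref{6.3} asserts $\ker(D_{\varphi}^{\theta})^*=C_{\theta}(\ker D_{\varphi}^{\theta})$, and since $C_{\theta}$ is an involution one obtains the reverse identity $\ker D_{\varphi}^{\theta}=C_{\theta}(\ker(D_{\varphi}^{\theta})^*)$, which establishes the first equality of the statement. The remaining content is a direct computation: for $k\in K_{\gamma}$, using $C_{\theta}(f)=\theta\bar z\bar f$ together with the fact that $\gamma\mid\theta$ (so $\theta/\gamma$ is inner and $\theta\bar\theta=1$),
\begin{equation*}
C_{\theta}\!\left(\beta a_+\,\tfrac{\theta}{\gamma}\,k\right)
\;=\;\theta\bar z\,\bar\beta\,\overline{a_+}\,\tfrac{\bar\theta}{\bar\gamma}\,\bar k
\;=\;\overline{\!\left(\tfrac{\beta}{\gamma}\right)\!}\,\overline{a_+}\,\bar z\,\bar k.
\end{equation*}
Letting $k$ range over $K_{\gamma}$ makes $\bar k$ range over $\overline{K_{\gamma}}$, which produces the claimed formula for $\ker D_{\varphi}^{\theta}$.

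The inclusion $\ker D_{\varphi}^{\theta}\subset \overline{(\beta/\gamma)}\,H^{2}_{-}$ is immediate: $\bar z\,\overline{K_{\gamma}}\subset \bar z\,\overline{H^{2}}=H^{2}_{-}$, and multiplication by the bounded function $\overline{a_+}\in\overline{H^{\infty}}$ preserves $H^{2}_{-}$. There is no real obstacle beyond careful bookkeeping with conjugations; the only substantive input is Theorem \ref{9.3} for the adjoint, and the one small simplification worth isolating is $\theta\bar\theta=1$, which is what lets $\theta$ cancel so that $\beta/\gamma$ reappears complex--conjugated in the final expression.
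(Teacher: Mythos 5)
Your proof is correct and follows exactly the route the paper intends: the text preceding Corollary \ref{9.7} says precisely that one reduces to the adjoint $(D_{\varphi}^{\theta})^*=D_{\bar\varphi}^{\theta}$, applies Theorem \ref{9.3} there, and transfers back via the conjugation identity of Theorem \ref{6.3}. Your explicit computation of $C_{\theta}$ on $\beta a_+\frac{\theta}{\gamma}K_{\gamma}$ and the final inclusion are both accurate.
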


\subsection{Rational symbols}

Let now $\varphi$ be continuous on the unit circle $\mathbb{T}$, $\varphi \in C(\mathbb{T})$. In this case we can describe the essential spectrum of $D_{\varphi}^{\theta}$ (see also \cite[Theorem 4.3]{Ding Qin Sang}).

For an inner function $\theta$,
$$\Sigma(\theta)=\{w\in\clos\mathbb{D}:\ \liminf_{z\to w}|\theta(z)|=0\}.$$

\begin{theorem}\label{9.8}
	If $\varphi \in C(\mathbb{T})$, then $\sigma_e(D_{\varphi}^{\theta})=\varphi(\mathbb{T})$.
\end{theorem}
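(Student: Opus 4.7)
\emph{Plan.} The inclusion $\varphi(\mathbb{T})\subseteq\sigma_e(\Dt)$ is already contained in~\eqref{w4.3} for every $\varphi\in L^\infty$, so the continuity hypothesis is only needed for the reverse inclusion. After a shift by $\lambda$, this reduces to the single assertion: \emph{if $\varphi\in C(\mathbb{T})$ has no zero on $\mathbb{T}$, then $\Dt$ is Fredholm}; applying it to $\varphi-\lambda$ for each $\lambda\notin\varphi(\mathbb{T})$ yields $\sigma_e(\Dt)\subseteq\varphi(\mathbb{T})$.

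The plan is to represent $\Dt$ as a $2\times 2$ block operator with respect to the orthogonal decomposition $(K_\theta)^\perp=H^2_-\oplus\theta H^2$, identifying $\theta H^2$ with $H^2$ via multiplication by $\theta$. Using $\Dt f=P^-(\varphi f)+\theta P^+(\bar\theta\varphi f)$, a short direct computation gives
\[
\Dt\ \longleftrightarrow\ \begin{bmatrix} P^-\varphi|_{H^2_-} & P^-(\varphi\theta)|_{H^2} \\ P^+(\bar\theta\varphi)|_{H^2_-} & P^+\varphi|_{H^2}\end{bmatrix}\colon H^2_-\oplus H^2\longrightarrow H^2_-\oplus H^2,
\]
whose diagonal entries are a dual Toeplitz operator and a Toeplitz operator with symbol $\varphi$, and whose off-diagonal entries are Hankel-type operators with symbols $\varphi\theta$ and $\bar\theta\varphi$.

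The crucial step is to show that both off-diagonal blocks are compact. Here I would invoke Hartman's theorem together with the fact that the Sarason algebra $H^\infty+C(\mathbb{T})$ is a closed subalgebra of $L^\infty$ containing both $H^\infty$ (whence $\theta$) and $C(\mathbb{T})$ (whence $\varphi,\bar\varphi$), so that the products $\varphi\theta$ and $\theta\bar\varphi$ both lie in $H^\infty+C(\mathbb{T})$; this makes the Hankel operator $H_{\varphi\theta}\colon H^2\to H^2_-$ and the conjugate Hankel operator $P^+(\bar\theta\varphi)|_{H^2_-}=H_{\theta\bar\varphi}^{\,*}\colon H^2_-\to H^2$ both compact. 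Once this is in place, Coburn's criterion gives Fredholmness of $T_\varphi$ for $\varphi\in C(\mathbb{T})$ without zeros on $\mathbb{T}$, and the dual Toeplitz block is Fredholm by its anti-unitary equivalence with $T_{\bar\varphi}$. Thus the block-diagonal part is Fredholm, and the compact off-diagonal perturbation preserves this property; so $\Dt$ is Fredholm. The main technical point is the Sarason-algebra verification, together with the identification of $P^+(\bar\theta\varphi)|_{H^2_-}$ as the adjoint of a compact Hankel operator; everything else is a brief assembly of classical theorems.
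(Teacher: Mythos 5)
Your proof is correct, but it follows a genuinely different route from the paper's. The paper disposes of the theorem in three lines: the inclusion $\varphi(\mathbb{T})\subset\sigma_e(\Dt)$ comes from \eqref{w4.3}, and for $\lambda\notin\varphi(\mathbb{T})$ it invokes Corollary \ref{5.3} to reduce Fredholmness of $D^\theta_{\varphi-\lambda}$ to that of the truncated Toeplitz operator $A^\theta_{1/(\varphi-\lambda)}$, which is then settled by citing a Fredholmness criterion for truncated Toeplitz operators (Theorem 5.3 of \cite{CP_spectral}, the condition $0\notin g(\Sigma(\theta)\cap\mathbb{T})$ being automatic since $g=1/(\varphi-\lambda)$ never vanishes). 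You instead bypass the equivalence-after-extension machinery entirely: you write $\Dt$ as a $2\times2$ block operator on $H^2_-\oplus\theta H^2$, check (correctly) that the diagonal blocks are the dual Toeplitz and Toeplitz operators with symbol $\varphi$ and that the off-diagonal blocks are $H_{\varphi\theta}$ and $H_{\theta\bar\varphi}^{\,*}$, and then use Hartman's theorem plus the fact that $H^\infty+C(\mathbb{T})$ is an algebra to kill the off-diagonal blocks modulo compacts. Your approach buys a self-contained, classical argument that makes transparent exactly where continuity of $\varphi$ is used (compactness of the Hankel corners), and as a bonus it recovers the index-zero statement of Corollary \ref{6.4} for continuous symbols, since $\operatorname{ind}T_\varphi+\operatorname{ind}T_{\bar\varphi}=0$. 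What it gives up is generality: the block-diagonalization-modulo-compacts trick is specific to $C(\mathbb{T})$ (or $H^\infty+C$) symbols, whereas the paper's reduction to $A^\theta_{\varphi^{-1}}$ works for any $\varphi\in\mathcal{G}L^\infty$ and is the engine behind most of Sections 7--8. One cosmetic point: the Fredholmness of $T_\varphi$ for continuous nonvanishing $\varphi$ is usually attributed to Gohberg rather than Coburn (Coburn's lemma is the kernel/cokernel dichotomy); this does not affect the argument.
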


\begin{proof}
	By \eqref{w4.3}, we have $\varphi(\mathbb{T})\subset \sigma_e(D_{\varphi}^{\theta})$. Conversely, if $\lambda\in \mathbb{C}\setminus \varphi(\mathbb{T})$, then $\varphi-\lambda\in \mathcal{G}L^{\infty}$ and $D_{\varphi}^{\theta}$ is Fredholm if and only if $A_{\frac 1{\varphi-\lambda}}^{\theta}$ is Fredholm. By Theorem 5.3 in \cite{CP_spectral} we have that this happens if and only if $0\notin g(\Sigma(\theta)\cap\mathbb{T})$ with $g=\frac 1{\varphi-\lambda}$, which is always true.
\end{proof}


\begin{corollary}\label{9.9}
	If $\varphi \in C(\mathbb{T})$, then $D_{\varphi}^{\theta}$ is Fredholm if and only if $\varphi$ is invertible in $C(\mathbb{T})$.
\end{corollary}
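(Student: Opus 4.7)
The plan is to deduce this as an immediate consequence of Theorem \ref{9.8}, which establishes the essential-spectrum identity $\sigma_e(D_\varphi^\theta)=\varphi(\mathbb{T})$ for continuous symbols. By definition, $D_\varphi^\theta$ is Fredholm if and only if $0\notin\sigma_e(D_\varphi^\theta)$. Combining these two facts, Fredholmness is equivalent to $0\notin\varphi(\mathbb{T})$, i.e.\ $\varphi$ does not vanish anywhere on $\mathbb{T}$.

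It then remains to observe the elementary fact that for $\varphi\in C(\mathbb{T})$, invertibility in the Banach algebra $C(\mathbb{T})$ is equivalent to $\varphi$ being nowhere zero on $\mathbb{T}$. This is because $\mathbb{T}$ is compact: if $\varphi$ never vanishes, then $|\varphi|$ attains its minimum and is bounded below by some $\delta>0$, so $1/\varphi$ is continuous (hence in $C(\mathbb{T})$); conversely, if $\varphi(t_0)=0$ for some $t_0$, then no continuous inverse can exist.

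There is no real obstacle, since the hard analytic work has been done in Theorem \ref{9.8} (where one direction comes from \eqref{w4.3} and the other from the equivalence after extension to $A_{1/(\varphi-\lambda)}^\theta$ together with the Fredholm criterion for truncated Toeplitz operators with continuous symbol from \cite{CP_spectral}). The corollary is a clean repackaging of that theorem at the single value $\lambda=0$, combined with the standard characterization of invertible elements of $C(\mathbb{T})$.
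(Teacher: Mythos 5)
Your proof is correct and follows exactly the route the paper intends: the corollary is stated as an immediate consequence of Theorem \ref{9.8}, using that Fredholmness means $0\notin\sigma_e(D_\varphi^\theta)=\varphi(\mathbb{T})$ and that, for a continuous symbol (whose essential range coincides with its range), non-vanishing on the compact set $\mathbb{T}$ is equivalent to invertibility in $C(\mathbb{T})$.
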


We can obtain further results if $\varphi$ is rational and continuous on $\mathbb{T}$, i.e., $\varphi \in \mathcal{R}$. We start by describing the kernels of dual truncated Toeplitz operators with rational symbols.

\begin{theorem}\label{9.10}
	Let $R\in \mathcal{R}$ and $R=P/Q$, where $P$ and $Q$ are polynomials without common zeros. Then $f\in \ker D_R^{\theta}$ if and only if there  is a decomposition $f=f_-+\theta \tilde f_+$, $f_-\in H_-^2,\  \tilde f_+\in H^2$ such that there are polynomials $P_1$, $P_2$ with $\deg (P_2) \leqslant \max\{\deg (P),\deg(Q)\}-1$ such that
	\begin{equation}\label{o9.1}
	f_-=\tfrac{P_1}P,\quad \tilde f_+=\tfrac{P_2}P,\quad \tfrac{P_1}Q+\theta \tfrac{P_2}Q\in K_{\theta}.
	\end{equation}%
\end{theorem}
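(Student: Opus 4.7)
The plan is to prove both directions separately. The easy direction ($\Leftarrow$) is immediate by substitution: if $f_-=P_1/P\in H^2_-$ and $\tilde f_+=P_2/P\in H^2$, then $f=(P_1+\theta P_2)/P\in(K_\theta)^\perp$, and
\[
Rf=\frac{Pf}{Q}=\frac{P_1+\theta P_2}{Q}=\frac{P_1}{Q}+\theta\frac{P_2}{Q}\in K_\theta
\]
by hypothesis, so $D_R^\theta f=Q_\theta Rf=0$.

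For the converse, suppose $f\in\ker D_R^\theta$ and set $g:=Rf\in K_\theta$, so that $Pf=Qg$. Writing $f=f_-+\theta\tilde f_+$ with $f_-\in H^2_-$ and $\tilde f_+\in H^2$, this becomes
\[
Pf_-+\theta P\tilde f_+=Qg. \quad (\ast)
\]
First, I would show $Pf_-$ is a polynomial. Since $\theta P\tilde f_+\in\theta H^2\subset H^2$ and $Qg\in H^2$, equation $(\ast)$ forces $P^-(Pf_-)=0$. On the other hand, expanding $P=\sum_{k=0}^{\deg P}a_kz^k$ and $f_-=\sum_{n\leqslant-1}c_nz^n$, the products $a_kc_nz^{k+n}$ contribute to non-negative Fourier modes only when $0\leqslant k+n\leqslant\deg P-1$. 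So $Pf_-$ decomposes as a polynomial of degree $\leqslant\deg P-1$ plus an $H^2_-$ term, and the vanishing of the $H^2_-$ part forces $P_1:=Pf_-$ to be a polynomial; thus $f_-=P_1/P$.

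Next, I would show $P\tilde f_+$ is a polynomial of the required degree. Multiplying $(\ast)$ by $\bar\theta$ gives $\bar\theta P_1+P\tilde f_+=Qg_-$ where $g_-:=\bar\theta g\in H^2_-$ (since $g\in K_\theta$). Applying $P^+$ yields
\[
P\tilde f_+=P^+(Qg_-)-P^+(\bar\theta P_1).
\]
By the analogous Fourier argument, $P^+(Qg_-)$ is a polynomial of degree $\leqslant\deg Q-1$. For the second term, the expansion $\bar\theta=\sum_{j\geqslant0}\overline{\hat\theta(j)}z^{-j}$ gives $P^+(\bar\theta z^k)=\sum_{j=0}^{k}\overline{\hat\theta(j)}z^{k-j}$, a polynomial of degree $\leqslant k$; by linearity, $P^+(\bar\theta P_1)$ is a polynomial of degree $\leqslant\deg P-1$. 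Hence $P_2:=P\tilde f_+$ is a polynomial of degree $\leqslant\max\{\deg P,\deg Q\}-1$ and $\tilde f_+=P_2/P$. The remaining condition $Rf\in K_\theta$ translates to $P_1/Q+\theta P_2/Q\in K_\theta$.

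The hard part of the argument is the polynomiality of $P^+(\bar\theta P_1)$: a priori this projection is only an $H^2$ function, and the explicit Fourier identity for $P^+(\bar\theta z^k)$ is what makes it a polynomial and forces the degree bound $\max\{\deg P,\deg Q\}-1$ on $P_2$.
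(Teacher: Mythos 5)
Your proof is correct and follows essentially the same route as the paper: rearrange $Pf=Qg$ into the two equations for $Pf_-$ and $P\tilde f_+$ and conclude each is a polynomial of the right degree because one side has Fourier spectrum bounded below and the other bounded above. The paper compresses your explicit Fourier bookkeeping (including the identity for $P^+(\bar\theta z^k)$) into a single appeal to ``a generalization of Liouville's theorem,'' but the argument is the same.
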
%
\begin{proof}
	Let $f=f_-+\theta \tilde f_+$ with $f_-\in H_-^2$, $\tilde f_+\in H^2$. Then $f\in \ker  D_R^{\theta}$ if and only if $R(f_-+\theta \tilde f_+)\in K_{\theta}$. The latter happens if and only if there exist $k_+\in H^2$ and $k_-\in H^2_-$ such that
	$$
	\left \{ \begin{array}{c}
	\frac PQ f_-+\frac PQ \theta \tilde f_+=k_+,\\
	\bar\theta k_+=k_-,
	\end{array} \right.$$
	or equivalently,
\begin{equation}\label{o9.4}
	\left \{ \begin{array}{c}
	P f_-=k_+Q-P\theta\tilde f_+,\\
	P\tilde f_+=k_-Q-\bar\theta Pf_-.
	\end{array} \right.
	\end{equation}
	
	By a generalization of Liouville's theorem, both sides of the first equation in \eqref{o9.4} must be equal to a polynomial $P_1$ such that $\frac{P_1}P \in H_-^2$ and, analogously, both sides of the second equation in \eqref{o9.4} must be equal to a polynomial $P_2$ such that $\frac{P_2}P \in H^2$ and $P_2=Qk_--P\bar\theta f_-$. So the degree of $P_2$ is appropriate and $$k_+=\tfrac{P_1}Q+\theta \tfrac PQ\tilde f_+=\tfrac{P_1}Q+\theta \tfrac{P_2}Q \in K_{\theta} .$$
	
	Conversely, if $P_1$ and $P_2$ are polynomials satisfying desired conditions, then for $f_-=\frac{P_1}P$, $\tilde f_+=\frac{P_2}P$ we have that $f_-\in H_-^2$, $\tilde f_+\in H^2$ and $$R(f_-+\theta \tilde f_+)=\tfrac PQ\left(\tfrac{P_1}P+\theta\tfrac{P_2}P\right)=\tfrac{P_1}Q+\theta\tfrac{P_2}Q\in K_{\theta},$$ so that $f=f_-+\theta \tilde f_+\in \ker D_R^{\theta}$.
\end{proof}

The previous theorem enables us to characterize the points $\lambda \in \sigma(D_R^{\theta})$ for $R\in \mathcal{R}$, as follows.

\begin{theorem}\label{9.11}
	If $R\in \mathcal{R}$, then $$\sigma(D_R^{\theta})=R(\mathbb{T}) \cup \sigma_p(D_R^{\theta})=\sigma_e(D_R^{\theta})\cup \sigma_p(D_R^{\theta}).$$
\end{theorem}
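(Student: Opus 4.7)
The statement to prove is a direct consequence of three previously established facts, so the plan is short and does not require any new technology (and in particular does not need Theorem \ref{9.10}; that theorem is useful for actually computing the point spectrum, but not for proving this abstract equality).

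My plan is first to dispose of the second equality. Since $R\in\mathcal R\subset C(\mathbb T)$, Theorem \ref{9.8} gives $\sigma_e(D_R^\theta)=R(\mathbb T)$, so $R(\mathbb T)\cup\sigma_p(D_R^\theta)=\sigma_e(D_R^\theta)\cup\sigma_p(D_R^\theta)$ is immediate. Therefore it suffices to prove the first equality $\sigma(D_R^\theta)=R(\mathbb T)\cup\sigma_p(D_R^\theta)$.

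The inclusion $\supseteq$ is trivial: $\sigma_e(D_R^\theta)\subseteq\sigma(D_R^\theta)$ always, and $\sigma_p(D_R^\theta)\subseteq\sigma(D_R^\theta)$ always, so $R(\mathbb T)\cup\sigma_p(D_R^\theta)=\sigma_e(D_R^\theta)\cup\sigma_p(D_R^\theta)\subseteq\sigma(D_R^\theta)$.

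For the converse inclusion, I would fix $\lambda\in\sigma(D_R^\theta)$ and assume $\lambda\notin R(\mathbb T)$, with the goal of showing $\lambda\in\sigma_p(D_R^\theta)$. By Theorem \ref{9.8} applied to $R-\lambda$ (or equivalently, since $\sigma_e(D_R^\theta)=R(\mathbb T)$), the operator $D_{R-\lambda}^\theta=D_R^\theta-\lambda I$ is Fredholm. Corollary \ref{6.4} then tells us that its Fredholm index equals $0$, and Corollary \ref{6.5} says that for such a Fredholm operator, invertibility is equivalent to triviality of the kernel. Since $\lambda\in\sigma(D_R^\theta)$ means $D_{R-\lambda}^\theta$ is not invertible, we conclude $\ker D_{R-\lambda}^\theta\neq\{0\}$, i.e., $\lambda\in\sigma_p(D_R^\theta)$, finishing the proof.

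There is no real obstacle here: the only input that has any depth is Theorem \ref{9.8} (identification of the essential spectrum), and the index-zero/kernel-trivial dichotomy supplied by Corollaries \ref{6.4} and \ref{6.5} turns the Fredholm alternative into the desired point-spectrum assertion in one line.
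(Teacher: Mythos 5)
Your proposal is correct and follows essentially the same route as the paper: identify $\sigma_e(D_R^\theta)=R(\mathbb{T})$ via Theorem \ref{9.8}, then for $\lambda\notin R(\mathbb{T})$ use the Fredholm alternative from Corollary \ref{6.5} to equate non-invertibility with a nontrivial kernel. The only cosmetic difference is that you explicitly invoke Corollary \ref{6.4} for the index-zero statement, which the paper leaves implicit inside Corollary \ref{6.5}.
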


\begin{proof}
	From Theorem \ref{9.8} we have $\sigma_e(D_R^{\theta})=R(\mathbb{T}) \subset \sigma(D_R^{\theta})$. If $\lambda \notin R(\mathbb{T})$, then $D_{R-\lambda}^{\theta}$ is Fredholm, so it is invertible if and only if it is injective by Corollary \ref{6.5}. Therefore $\lambda\in \sigma(D_R^{\theta})$ if and only if $\ker D_{\varphi-\lambda}^{\theta}\neq \{0\}$, i.e., $\lambda$ is an eigenvalue of $D_{\varphi}^{\theta}$.
\end{proof}

As an application we study the spectra of the dual truncated shift $D_{z}^{\theta}$ and its adjoint -- the dual truncated backward shift $D_{\bar z}^{\theta}$. We start by studying their kernels. The next result is a consequence of Theorems \ref{9.10} and \ref{9.11}.

\begin{theorem}\label{9.12}If $\lambda\in \mathbb{T} \cup \mathbb{D^-}$ or if $\lambda \in \mathbb{D}$ and $\theta(0)\neq 0$
	then $\ker D_{z-\lambda}^{\theta}=\ker D_{\bar z-\bar\lambda}^{\theta}=\{0\}$. If $\lambda \in \mathbb{D}$, $\theta(0)=0$, then
	\[\ker D_{z-\lambda}^{\theta}=\sspan\left\{\tfrac 1{z-\lambda}\right\}\subset H_-^2\]
	and
	\[\ker D_{\bar z-\bar\lambda}^{\theta}=\sspan\left\{\tfrac{\theta}{1-\lambda z}\right\}\subset \theta H^2.\]
\end{theorem}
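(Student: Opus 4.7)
The plan is to apply Theorem \ref{9.10} directly to the rational symbol $R=z-\lambda=P/Q$ with $P=z-\lambda$, $Q=1$, and then transfer the resulting description of $\ker D_{z-\lambda}^{\theta}$ to $\ker D_{\bar z-\bar\lambda}^{\theta}=\ker(D_{z-\lambda}^{\theta})^{*}$ via Theorem \ref{6.3}.

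By Theorem \ref{9.10}, elements of $\ker D_{z-\lambda}^{\theta}$ have the form $f=P_1/(z-\lambda)+\theta\,c_2/(z-\lambda)$ with $P_1$ a polynomial and $c_2\in\mathbb{C}$ (since $\max\{\deg P,\deg Q\}-1=0$), subject to $P_1/(z-\lambda)\in H^2_-$, $c_2/(z-\lambda)\in H^2$, and $P_1+\theta c_2\in K_\theta$. A short case analysis on the position of $\lambda$ then suffices. For $\lambda\in\mathbb{D}$, the condition $P_1/(z-\lambda)\in H^2_-$ forces $P_1$ to be a constant $c_1$, while $c_2/(z-\lambda)\in H^2$ forces $c_2=0$; then $c_1\in K_\theta$ permits non-zero $c_1$ iff $\theta(0)=0$, since $\langle c_1,\theta h\rangle=c_1\,\overline{\theta(0)h(0)}$ for $h\in H^2$. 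For $\lambda\in\mathbb{T}$, the failure of $c/(z-\lambda)$ to lie in $L^2$ for $c\ne 0$ forces both $P_1$ and $c_2$ to vanish. For $\lambda\in\mathbb{D}^-$, $P_1/(z-\lambda)\in H^2_-$ forces $(z-\lambda)\mid P_1$ and the polynomial quotient, being in $H^2_-$, is zero, so $P_1=0$; and then $\theta c_2\in K_\theta\cap\theta H^2=\{0\}$ forces $c_2=0$. Collecting: $\ker D_{z-\lambda}^{\theta}=\{0\}$ except when $\lambda\in\mathbb{D}$ and $\theta(0)=0$, in which case $\ker D_{z-\lambda}^{\theta}=\sspan\{1/(z-\lambda)\}\subset H^2_-$.

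For the adjoint, Theorem \ref{6.3} gives $\ker D_{\bar z-\bar\lambda}^{\theta}=C_\theta(\ker D_{z-\lambda}^{\theta})$, which is $\{0\}$ in the trivial cases. In the non-trivial case, using $\bar z=1/z$ on $\mathbb{T}$,
\begin{equation*}
C_\theta\!\left(\tfrac{1}{z-\lambda}\right)=\theta\bar z\cdot\tfrac{1}{\bar z-\bar\lambda}=\theta\bar z\cdot\tfrac{z}{1-\bar\lambda z}=\tfrac{\theta}{1-\bar\lambda z}\in\theta H^2,
\end{equation*}
giving the desired one-dimensional span. The only slightly delicate step in the first part is the case $\lambda\in\mathbb{D}^-$: there $c_2/(z-\lambda)\in H^2$ is automatic, so the vanishing of $c_2$ must be extracted from the orthogonality $\theta c_2\perp\theta H^2$ implicit in $K_\theta$-membership; everything else is a direct unwinding of Theorems \ref{9.10} and \ref{6.3}.
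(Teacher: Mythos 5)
Your proposal is correct and follows essentially the same route as the paper: apply Theorem \ref{9.10} with $P=z-\lambda$, $Q=1$ to reduce to constants $P_1=c_1$, $P_2=c_2$, run the case analysis on the location of $\lambda$ (using $H^2\cap H^2_-=\{0\}$ for $\lambda\in\mathbb{D}^-$ and $K_\theta\cap\theta H^2=\{0\}$, and the observation that a constant lies in $K_\theta$ iff $\theta(0)=0$), then transfer to the adjoint via $\ker(D^\theta_{z-\lambda})^*=C_\theta(\ker D^\theta_{z-\lambda})$ from Theorem \ref{6.3}. Note that your computation yields $C_\theta\bigl(\tfrac{1}{z-\lambda}\bigr)=\tfrac{\theta}{1-\bar\lambda z}$, which is the correct generator (one can check directly that $(\bar z-\bar\lambda)\tfrac{\theta}{1-\bar\lambda z}=\bar z\theta\in K_\theta$ when $\theta(0)=0$), so the $\tfrac{\theta}{1-\lambda z}$ in the statement should read $\tfrac{\theta}{1-\bar\lambda z}$.
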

\begin{proof}
	By Theorem \ref{9.10}, $f\in \ker D_{z-\lambda}^{\theta}$ if and only if there are constants $A,B\in\mathbb{C}$ such that $f=f_-+\theta\tilde f_+$ with $f_-=\frac A{z-\lambda}$ and $\tilde f_+=\frac B{z-\lambda}$. Note that if $\lambda \in \mathbb{T}\cup \mathbb{D}^-$, then $A=0$, and if $\lambda \in \mathbb{T}\cup \mathbb{D}$, then $B=0$. Therefore
	\begin{enumerate}
	\item if $\lambda \in \mathbb{T}$, then $\ker D_{z-\lambda}^{\theta}=\{0\}$;
	\item if $\lambda \in \mathbb{D}^-$, then $\ker D_{z-\lambda}^{\theta}=\{\theta \tfrac B{z-\lambda} : B\in \C,\ \theta B\in K_{\theta}\}$;
	\item if $\lambda \in \mathbb{D}$, then $\ker D_{z-\lambda}^{\theta}=\{\tfrac A{z-\lambda} : A\in \C\cap  K_{\theta}\}=\{0\}$.
\end{enumerate}
 So $\ker D_{z-\lambda}^{\theta}=\{0\}$ unless $\lambda \in \mathbb{D}$ and $\theta(0)=0$. By Theorem \ref{6.3}, we have $\ker D_{\bar z-\bar\lambda}^{\theta}=C_{\theta} (\ker D_{z-\lambda}^{\theta})$.
\end{proof}

\begin{corollary}\label{9.13}
	\begin{enumerate}
		\item $\sigma_e(D_z^{\theta})=\sigma_e(D_{\bar z}^{\theta})=\mathbb{T}$;
		\item $\sigma(D_z^{\theta})$ and $\sigma(D_{\bar z}^{\theta})$ are the disjoint union of $\mathbb{T}$ 
		 with $\sigma_p(D_z^{\theta})=\sigma_p(D_{\bar z}^{\theta})$, where $\sigma_p(D_z^{\theta})=\emptyset$ if $\theta(0)\neq 0$ and $\sigma_p(D_z^{\theta})=\mathbb{D}$ if $\theta(0)=0$.
	\end{enumerate}
\end{corollary}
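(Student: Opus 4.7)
The plan is to obtain both parts of Corollary \ref{9.13} as bookkeeping applications of Theorems \ref{9.8}, \ref{9.11}, and \ref{9.12}. The crucial preliminary observation is that both $z\mapsto z$ and $z\mapsto \bar z$, viewed as functions on $\mathbb{T}$, belong to $\mathcal{R}$ (on $\mathbb{T}$ we have $\bar z = z^{-1}$), so all three cited results apply.

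For part (1), I would simply invoke Theorem \ref{9.8} with $\varphi=z$ and $\varphi=\bar z$. Since both these functions map $\mathbb{T}$ bijectively onto $\mathbb{T}$, the formula $\sigma_e(D_{\varphi}^{\theta})=\varphi(\mathbb{T})$ immediately yields $\sigma_e(D_z^{\theta})=\sigma_e(D_{\bar z}^{\theta})=\mathbb{T}$.

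For part (2), Theorem \ref{9.11} applied to $R=z$ and $R=\bar z$ gives
\[\sigma(D_z^{\theta})=\mathbb{T}\cup \sigma_p(D_z^{\theta}),\qquad \sigma(D_{\bar z}^{\theta})=\mathbb{T}\cup \sigma_p(D_{\bar z}^{\theta}).\]
Reading off Theorem \ref{9.12}, $\ker D_{z-\lambda}^{\theta}\neq\{0\}$ happens exactly when $\lambda\in\mathbb{D}$ and $\theta(0)=0$, so
\[\sigma_p(D_z^{\theta})=\begin{cases}\mathbb{D}&\text{if }\theta(0)=0,\\ \emptyset &\text{if }\theta(0)\neq 0.\end{cases}\]
In either case this set is disjoint from $\mathbb{T}$, which secures the disjoint-union structure of $\sigma(D_z^{\theta})$, and likewise of $\sigma(D_{\bar z}^{\theta})$ once the coincidence of the point spectra is established.

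The remaining point, $\sigma_p(D_z^{\theta})=\sigma_p(D_{\bar z}^{\theta})$, follows from the second half of Theorem \ref{9.12}: the identity $\ker D_{\bar z-\bar\lambda}^{\theta}=C_{\theta}(\ker D_{z-\lambda}^{\theta})$, together with the fact that $C_{\theta}$ is an antilinear bijection, shows that $\bar\lambda$ is an eigenvalue of $D_{\bar z}^{\theta}$ if and only if $\lambda$ is an eigenvalue of $D_z^{\theta}$. Since $\sigma_p(D_z^{\theta})$ is either $\emptyset$ or $\mathbb{D}$, both invariant under complex conjugation, we conclude $\sigma_p(D_{\bar z}^{\theta})=\sigma_p(D_z^{\theta})$. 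There is no real obstacle anywhere in the argument; each step is either a direct citation or a one-line observation, and the only mild care required is to track the conjugation symmetry that makes the two point spectra identical rather than merely related by $\lambda\mapsto\bar\lambda$.
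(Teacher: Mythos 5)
Your proposal is correct and follows exactly the route the paper intends: Corollary \ref{9.13} is stated without a separate proof precisely because it is the direct combination of Theorem \ref{9.8} (essential spectrum equals $\varphi(\mathbb{T})=\mathbb{T}$), Theorem \ref{9.11} (spectrum equals $R(\mathbb{T})\cup\sigma_p$), and Theorem \ref{9.12} (the kernels of $D_{z-\lambda}^{\theta}$ and $D_{\bar z-\bar\lambda}^{\theta}$), which is what you assemble. Your extra remark that the conjugation symmetry $\lambda\mapsto\bar\lambda$ is harmless because $\emptyset$ and $\mathbb{D}$ are conjugation-invariant is the right small point to make explicit.
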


\end{document}